\newfont{\cyr}{wncyr10 scaled 1100}
\theoremstyle{plain}
\newtheorem{theorem}{Theorem}[section]
\newtheorem{corollary}[theorem]{Corollary}
\newtheorem{lemma}[theorem]{Lemma}
\newtheorem{proposition}[theorem]{Proposition}
\newtheorem{propo}[theorem]{Proposition}
\newtheorem{coro}[theorem]{Corollary}
\theoremstyle{definition}
\newtheorem{examplewr}[theorem]{Example}
\theoremstyle{remark}
\newtheorem{obswr}[theorem]{Observation}
\newtheorem{remarkwr}[theorem]{Remark}
\DeclareMathOperator{\dR}{\mathrm{dR}}
\DeclareMathOperator{\BK}{BK}
\DeclareMathOperator{\et}{et}
\DeclareMathOperator{\cyc}{cyc}
\DeclareMathOperator{\fin}{f}
\DeclareMathOperator{\Eis}{Eis}
\DeclareMathOperator{\dlog}{dlog}
\DeclareMathOperator{\FK}{FK}
\DeclareMathOperator{\sub}{sub}
\DeclareMathOperator{\quo}{quo}
\newcommand{\fM}{{\mathfrak{M}}}
\newcommand{\cW}{\mathcal W}
\newcommand{\Q}{\mathbb{Q}}
\newcommand{\Z}{\mathbb{Z}}
\newcommand{\C}{\mathbb{C}}
\newcommand{\Gal}{\mathrm{Gal\,}}
\newcommand{\Div}{\mathrm{Div}}
\newcommand{\Fil}{\mathrm{Fil}}
\newcommand{\Frob}{\mathrm{Fr}}
\newcommand{\End}{\mathrm{End}}
\newcommand{\Aut}{\mathrm{Aut}}
\newcommand{\Fr}{\mathrm{Fr}}
\newcommand{\ord}{{\mathrm{ord}}}
\newfont{\gotip}{eufb10 at 12pt}
\newcommand{\cO}{{\mathcal O}}
\newcommand{\ra}{\rightarrow}
\newcommand{\lra}{\longrightarrow}
\DeclareMathOperator{\Hom}{Hom}
\newcommand{\res}{\mathrm{res}}
\newcommand{\fp}{{\mathfrak p}}
\begin{document}

\title[Motivic congruences and Sharifi's conjecture]{Motivic congruences and Sharifi's conjecture}

\author{\'Oscar Rivero and Victor Rotger}

\begin{abstract}
Let $f$ be a cuspidal eigenform of weight two and level $N$, let $p\nmid N$ be a prime at which $f$ is congruent to an Eisenstein series and let $V_f$ denote the $p$-adic Tate module of $f$.
Beilinson \cite{Be} constructed a class $\kappa_f\in H^1(\Q,V_f(1))$ arising from the cup-product of two Siegel units and proved a striking relationship with the first derivative $L'(f,0)$ at the near central point $s=0$ of the $L$-series of $f$, which led him to formulate his celebrated conjecture.  In this note we prove two congruence formulae relating the ``motivic part" of $L'(f,0) \,(\mathrm{mod} \, p)$ and $L''(f,0) \,(\mathrm{mod} \, p)$ with circular units.
The proofs make use of delicate Galois properties satisfied by various integral lattices within $V_f$ and exploits  Perrin-Riou's, Coleman's and Kato's work on  the Euler systems of circular units and Beilinson--Kato elements and, most crucially, the work of Sharifi \cite{Sh}, Fukaya--Kato \cite{FK} and Ohta \cite{Oh1}.
\end{abstract}

\address{O. R.: Mathematics Institute, University of Warwick, Coventry CV4 7AL, United Kingdom}
\email{riverosalgado@gmail.com}

\address{V. R.: IMTech, UPC and Centre de Recerca Matem\`{a}tiques, C. Jordi Girona 1-3, 08034 Barcelona, Spain }
\email{victor.rotger@upc.edu}

\subjclass[2010]{11F33 (primary); 11F67, 11F80 (secondary)}

\maketitle

\tableofcontents

\section{Introduction}

The aim of this paper is showing how the ideas underlying Sharifi's conjecture \cite{Sh} and the work \cite{FK} of Fukaya and Kato can be exploited to study congruences among motivic classes that naturally arise from the Euler systems of Beilinson-Kato elements and circular units.
%Eisenstein congruences between circular units and Beilinson--Kato elements

In order to set the stage, let $N > 1$ be a positive integer, $\theta: (\Z/N\Z)^\times \ra \bar\Q^\times$ an even Dirichlet character and $f \in S_2(N,\theta)$ a normalized cuspidal eigenform of level $N$, weight $2$ and nebentype $\theta$.

Fix a prime $p\nmid 6N \varphi(N)$. Let $T_{f,X}$ and $T_{f,Y}$  denote the integral $p$-adic Galois representations given as the $f$-isotypical quotient of  $H^1_{\et}(\bar{X},\Z_p(1))$, resp.\,$H^1_{\et}(\bar{Y},\Z_p(1))$ of the closed, resp.\,open modular curve\footnote{Cf.\,\S \ref{sec:back} for the particular models of these curves we employ in this article and precise definitions.
} of level $\Gamma_1(N)$. Set as usual $V_f=T_{f,X}\otimes \Q =T_{f,Y}\otimes \Q$.

%\begin{ass}\label{goren}
%The localization of the Hecke algebra of level $N$ at the Eisenstein ideal is Gorenstein. In particular, this implies that $T_f$ is a free $\mathcal O_{\mathfrak p}$-module of rank 2.
%\end{ass}

Beilinson \cite{Be} introduced a motivic element in the $K$-group $K_2(Y)$ giving rise to a global Galois cohomology class
\begin{equation}
\kappa_f = \kappa_f(\chi_1,\chi_2) \in H^1(\Q, T_{f,Y}(1))
\end{equation}
that was later the basis of Kato's Euler system in \cite{Kato}. This class depends on auxiliary data (cf.\,loc.\,cit.\,and \cite{BD}, \cite{Han}, \cite[\S 9]{KLZ}, \cite{Sch} for several presentations of the subject in the literature). With our normalizations, $\kappa_f$ depends on the choice of two auxiliary Dirichlet characters $\chi_1$ and $\chi_2$ of the same parity (see \S \ref{first-section}) and it is straight-forward to relate it to other equivalent conventions adopted in loc.\,cit.

The main motivation of Beilinson's construction of $\kappa_f$ was providing evidence for his celebrated conjecture on values of $L$-functions of mixed motives, encompassing Dirichlet's unit theorem and the Birch and Swinnerton-Dyer conjecture for elliptic curves as  instances of it. This conjecture was later refined by Bloch-Kato in \cite{BK},  and in the case at hand predicts that
\begin{equation}\label{Bei-conj}
\ord_{s=0} L(f,s) \, \stackrel{?}{=} \dim H^1_{\fin}(\Q,V_f(1)),
\end{equation}
where the left-hand side is the $L$-series associated to $f$ and the right-hand side denotes the space of {\em crystalline} classes in $H^1(\Q,V_f(1))$. Since $ L(f,0)$ vanishes (for innocent reasons, due to the vanishing of a $\Gamma$-factor arising from analytic continuation), \eqref{Bei-conj} suggests that $H^1_{\fin}(\Q,V_f(1))$ should contain non-trivial classes, and Beilinson proved in loc.\,cit.\,that $\kappa_f$ is indeed crystalline and non-trivial, provided $L'(f,0) \ne 0$.

\vspace{0.2cm}

Let $F$ be the finite extension of $\Q$ generated by the field of coefficients of $f$
%the $N$-th roots of unity\footnote{Revisar si cal " the $N$-th roots of unity" i revisar a la seguent seccio el cos de definicio de les modular units}
and the values of all Dirichlet characters of conductor $N$; let $\mathcal O$ be its ring of integers and $\mathfrak p\subset \cO$ a prime ideal above $p$.

In this note we assume $f$ is congruent to an Eisenstein series modulo  $\mathfrak p$. Up to replacing $f$ with a twist of it, we may assume without loss of generality that
\begin{equation}\label{congruence}
f \equiv E_2(\theta,1) \,\, \mathrm{mod} \, \mathfrak p^t
\end{equation}
for some $t \geq 1$, where $E_2(\theta,1)$ is the classical Eisenstein series recalled in \eqref{def-Eis} below.\footnote{Congruence \eqref{congruence} is equivalent to $f^* \equiv E_2(1,\bar \theta)$, where $f^*=f\otimes \theta^{-1}$ is the dual form of $f$, and this in turn implies that $ \mathfrak p^t$ divides the generalized Bernoulli number $B_2(\bar \theta)$ (or equivalently the $L$-value $L(\bar\theta,-1)$).}
We take $t$ to be the largest power satisfying \eqref{congruence}.

As is well-known, $T_{f,X}$ and $T_{f,Y}$ are finitely generated $\cO_{\mathfrak{p}}[G_\Q]$-modules giving rise to the same Galois representation $V_f$ over $F_{\mathfrak{p}}$. However, the lattices $T_{f,X}$ and $T_{f,Y}$ are not isomorphic as $G_{\Q}$-modules, and in general none of them are necessarily free as $\cO_{\mathfrak{p}}$-modules. Setting $ \bar T := T\otimes \mathcal O/\mathfrak p^t$ for any $\cO_{\mathfrak{p}}$-module, congruence \eqref{congruence} does imply (cf.\,\S \ref{sec:back}) that one always has surjective homomorphisms of $G_{\Q}$-modules
% In \cite[\S7.1.11]{FK} the authors establish the existence of a short exact sequence of $G_{\mathbb Q}$-modules
%\begin{equation}\label{VfX}
%0 \ra  \mathcal O/\mathfrak p^t(\theta) \lra T_{f,X} \otimes \mathcal O/\mathfrak p^t \stackrel{\pi_2}{\lra} \mathcal O/\mathfrak p^t (1) \ra 0.
%\end{equation}
%From the relations between the lattices attached to the open and closed modular curve, it follows that there is an exact sequence of $G_{\mathbb Q}$-modules (cf.\,Proposition \ref{ses-y}):
%\begin{equation}\label{Vf*}
%0 \ra  \mathcal O/\mathfrak p^t(1) \lra \bar T_{f,Y} := T_{f,Y}\otimes \mathcal O/\mathfrak p^t \stackrel{\pi_1}{\lra} \mathcal O/\mathfrak p^t(\theta) \ra 0.
%\end{equation}
\begin{equation}\label{VfX}
\bar{T}_{f,Y}\stackrel{\bar\pi_1}{\lra} \mathcal O/\mathfrak p^t(\theta), \quad  \bar{T}_{f,X} \stackrel{\bar\pi_2}{\lra} \mathcal O/\mathfrak p^t(1).
\end{equation}

The maps $\bar\pi_1$ and $\bar\pi_2$ in \eqref{VfX}  are non-canonical, but we exploit the work of Ohta \cite{Oh1}, \cite{Oh2}, Sharifi \cite{Sh} and Fukaya-Kato \cite{FK} to rigidify them in a canonical way, in the sense that both $\bar\pi_1$ and $\bar\pi_2$ only depend on canonical periods naturally associated to $f$; cf.\,\eqref{rig-iso} and \eqref{rig-iso2}.

Write $\bar \kappa_f = \kappa_{f} \, (\mbox{mod } \mathfrak{p}^t)$ and define
$$
\bar \kappa_{f,1} = \bar\pi_{1*}(\bar\kappa_f)\in H^1(\mathbb Q,\mathcal O/\mathfrak p^t(\theta)(1)).
$$
Motivated by the above discussion of Beilinson's conjecture, $\bar \kappa_{f,1}$ may be regarded as a motivic avatar of the first derivative $L'(f,0)$ $(\mathrm{mod} \, \mathfrak p^t)$. The first main result of this note, Theorem \ref{thm1} below, is an explicit formula for $\bar \kappa_{f,1}$ in terms of algebraic $L$-values and circular units, which in particular provides a criterion for this class to vanish. In the parlance of \cite{BD-Joch}, our Theorem \ref{thm1} may be interpreted as a {\em Jochnowitz congruence}
$$
L_{\mathrm{alg}}'(f,0) \,  \equiv \,  L_{\mathrm{alg}}'(\theta,0) \quad (\mathrm{mod} \, \mathfrak p^t)
$$
between the ``algebraic" or "motivic parts" of the derivative at $s=0$ of the Hasse-Weil $L$-function $L(f,s)$ and Dirichlet's $L$-function $L(\theta,s)$.

%Proposition \ref{lift}
We further show that $\kappa_f$ may be lifted  to an element in $H^1(\mathbb Q, T_{f,X}(1))$ if and only if the $(\mathrm{mod} \, \mathfrak p^t)$ class $\bar \kappa_{f,1}$ vanishes. When this happens, such a lift is unique and we thus continue to denote it $\kappa_f$ by slight abuse of notation; we may then define
$$
\bar \kappa_{f,2} = \bar\pi_{2*}(\bar\kappa_{f}) \in H^1(\mathbb Q, \mathcal O/\mathfrak p^t(2)),
$$
which we regard as the motivic counterpart of the second derivative $L^{''}(f,0)$ $(\mathrm{mod} \, \mathfrak p^t)$.

The second main result of this note, Theorem \ref{thm2} below,  provides  an explicit formula for $\bar \kappa_{f,2}$ as the cup-product of two circular units. We find interesting that the circle of ideas appearing in \cite{FK} and \cite{Sh} can be applied to the computation of second derivatives, a type of result which appears to be quite novel.

In order to state our results precisely, let
\begin{equation}\label{units-iso}
\Z[\mu_N]^\times[\theta] = (\Z[\mu_N]^\times \otimes \mathcal O_{\mathfrak p}(\bar \theta))^{\Gal(\Q(\mu_N)/\Q)} \simeq \Hom(\mathcal O_{\mathfrak p}(\theta),\Z[\mu_N]^\times \otimes \mathcal O_{\mathfrak p})
\end{equation}
denote the $\theta$-isotypic component of $\Z[\mu_N]^\times \otimes \mathcal O_{\mathfrak p}$ on which $\Gal(\Q(\mu_N)/\Q)$ acts through $\theta$, which may be naturally identified with a $\mathcal O_{\mathfrak p}$-submodule of $\Z[\mu_N]^\times \otimes \mathcal O_{\mathfrak p}$ of rank 1 when $\theta \neq 1$ (resp. rank 0 when $\theta = 1$).
Kummer theory gives rise to an injective homomorphism
$$\Z[\mu_N]^\times[\bar \theta] \ra \Hom(G_{\Q(\mu_N)}, \mathcal O_{\mathfrak p}(1))[\bar \theta] \ra H^1(\mathbb Q, \mathcal O_{\mathfrak p}(\theta)(1)).$$

Fix a primitive $N$-th root of unity $\zeta_N$ and define the circular unit
\begin{equation}\label{circular}
c_{\theta} := \prod_{a=1}^{N-1}  (1-\zeta_N^a)^{\theta(a)} \in\Z[\mu_N]^\times[\bar \theta].
\end{equation}
Let
$$
c_{\theta} \in H^1(\mathbb Q, \mathcal O_{\mathfrak p}(\theta)(1))
$$
denote, with the same symbol by a slight abuse of notation, its image under the identification provided by the Kummer map. Write $\bar c_{\theta} = c_{\theta} \, (\mbox{mod } \mathfrak{p}^t)\, \in H^1(\mathbb Q, \mathcal O/\mathfrak p^t(\theta)(1))$.

Our main theorems are conditional on the following two hypotheses, that we assume throughout this article:

\begin{enumerate}

\item[(H1)] {\em Non-trivial zeroes mod $\mathfrak{p}$:}
\begin{equation*}
%\label{modPzero}
\theta(p)-1, \, \chi_1 \bar \chi_2(p)-1, \, \theta \chi_1 \bar \chi_2(p)-1 \ne 0 \,\, (\mbox{mod} \,\, \mathfrak{p}).
\end{equation*}

\item[(H2)] Letting $\Sigma_X$, $\Sigma_Y$ denote the torsion submodules of $T_{f,X}$ and $T_{f,Y}$ respectively, the $G_{\Q_p}$-module $\mathcal O/\mathfrak p^t(\theta)$ does not show up as a quotient of $\Sigma_Y/\Sigma_X$.

\end{enumerate}

Note that (H1) implies that $\theta$ and in fact $\theta_{|\mathbb Q_p}$ is non-trivial, even mod $\mathfrak{p}$; in particular, $c_{\theta}$ is a non-trivial unit. Note that (H2) follows automatically if the localization of the Hecke algebra acting on $M_2(\Gamma_1(N))$ at the Eisenstein ideal is Gorenstein, as this implies that $T_{f,Y}$ is free as $\mathcal O_{\mathfrak p}$-module. We wonder whether (H2) might be weaker and more tractable than asking the Hecke algebra to be Gorenstein.

%We thus expect both hypotheses (H1) and (H2) to hold very often; in those cases where at least one of these fails, then we expect Theorem \ref{thm1} to become the trivial congruence $0 \equiv 0$.

Define the algebraic $L$-value $$L^{\mathrm{alg}}(f^*,\bar \chi_1 \chi_2,1) = L(f^*,\bar \chi_1 \chi_2,1)/\Omega_f^+ \in \cO$$ where $\Omega_f^+$ is Shimura's complex period associated to $f^*$, chosen in a specific way that we recall in \S \ref{mazur}. Let also $$\mathfrak g(\chi)=\sum_{a=1}^{N-1} \chi(a)\zeta_N^a$$ denote the Gauss sum attached to a Dirichlet character $\chi$ of conductor $N$.

\begin{theorem}\label{thm1}
%Assume (H1)-(H3). Then in
In  $H^1(\mathbb Q_p, \mathcal O/\mathfrak p^t(\theta)(1))$ we have
\[ \bar \kappa_{f,1} \equiv \frac{iN}{12} \cdot \frac{\mathfrak g(\theta \chi_1 \bar \chi_2)}{\mathfrak g(\theta)} \cdot L^{\mathrm{alg}}(f^*,\bar \chi_1 \chi_2,1) \times \bar c_{\theta} \pmod{\mathfrak p^t}.\]
This equality takes place globally in $H^1(\mathbb Q, \mathcal O/\mathfrak p^t(\theta)(1))$ if $p$ is $\bar\theta$-regular as specified in \eqref{Gras-hyp}.
\end{theorem}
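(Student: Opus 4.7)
The plan is to combine the Beilinson--Kato construction of $\kappa_f$ as a Hecke-projected cup-product of Siegel units with the Sharifi--Fukaya--Kato description of the Eisenstein quotient in terms of cyclotomic units, and then to identify the resulting scalar via an explicit reciprocity / Rankin--Selberg calculation. The upgrade from a local identity at $p$ to a global identity uses the $\bar\theta$-regularity hypothesis \eqref{Gras-hyp}.

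First I would recall that $\kappa_f$ is, up to normalization, the image under the $f$-isotypic idempotent of the étale Chern class in $H^2_{\et}(Y,\Z_p(2)) \supset H^1(\mathbb Q, H^1_{\et}(\bar Y,\Z_p(2)))$ of a cup-product $\{g_{\chi_1},g_{\chi_2}\}$ of two Siegel units, where $\chi_1,\chi_2$ encode the auxiliary Dirichlet characters in \S\ref{first-section}. The rigidified projection $\bar\pi_1 \colon \bar T_{f,Y} \twoheadrightarrow \mathcal O/\mathfrak p^t(\theta)$ of \eqref{VfX} is, after the canonical-period normalization of Ohta \cite{Oh1} and Fukaya--Kato \cite{FK}, identified with specialization to the cusp $\infty$ taken mod $\mathfrak p^t$; hypothesis (H2) is needed here to ensure that this projection is unambiguously defined on the possibly non-free lattice $\bar T_{f,Y}$.

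Next I would apply $\bar\pi_{1*}$ to $\bar\kappa_f$ and invoke the Eisenstein specialization of Sharifi's conjecture \cite{Sh}, proved in the relevant range by Fukaya and Kato \cite{FK}: the image of a cup-product of Siegel units, restricted to the Eisenstein quotient, is the Kummer class of an explicit cyclotomic unit in $H^1(\mathbb Q, \mathcal O/\mathfrak p^t(\theta)(1))$. Because $\bar\pi_1$ is $\theta$-equivariant, the isotypic decomposition \eqref{units-iso} forces this class to be a scalar multiple of $\bar c_\theta$, yielding $\bar\kappa_{f,1}\equiv c\cdot\bar c_\theta\pmod{\mathfrak p^t}$ for some $c\in\mathcal O/\mathfrak p^t$. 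To pin down $c$, I would pair both sides with a suitable test element and apply the explicit reciprocity law coming from \cite{Kato}: the pairing $\langle\kappa_f,\omega_{f^*}\rangle$ computes $L(f^*,\bar\chi_1\chi_2,1)$ up to the Gauss sum factor $\mathfrak g(\theta\chi_1\bar\chi_2)/\mathfrak g(\theta)$, arising from the Fourier-expansion comparison between character-twisted Siegel units and their Manin-symbol expansion, and the prefactor $iN/12$, arising from the residue of the relevant Eisenstein series at $\infty$ and the normalization of $\omega_{f^*}$. Dividing by Shimura's period $\Omega_f^+$ produces the algebraic $L$-value on the right-hand side, giving the stated identity in $H^1(\mathbb Q_p, \mathcal O/\mathfrak p^t(\theta)(1))$. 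Finally, under the $\bar\theta$-regularity assumption \eqref{Gras-hyp}, which amounts to the triviality of the $\bar\theta$-component of the $p$-part of the class group of $\mathbb Q(\mu_N)$, the localization map from the natural crystalline / Selmer subgroup of $H^1(\mathbb Q, \mathcal O/\mathfrak p^t(\theta)(1))$ to $H^1(\mathbb Q_p, \cdot)$ is injective; both classes lie in this subgroup, so the local congruence lifts to a global one.

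The main obstacle will be the precise tracking of normalizations. The canonical period rigidifying $\bar\pi_1$, the Gauss sum factors introduced by passing between Siegel units and their $\chi$-twisted Manin-symbol expansions, and the integrality of the Sharifi map modulo $\mathfrak p^t$ must all combine to reproduce the explicit scalar $\frac{iN}{12}\cdot\frac{\mathfrak g(\theta\chi_1\bar\chi_2)}{\mathfrak g(\theta)}\cdot L^{\mathrm{alg}}(f^*,\bar\chi_1\chi_2,1)$. Hypothesis (H2) intervenes technically to ensure that the mod $\mathfrak p^t$ reduction commutes with $\bar\pi_1$, while (H1) enforces that $\bar c_\theta$ is non-trivial and that the ambient cohomology group has no unwanted $G_{\Q_p}$-invariants, so that the identification is not a trivial $0=0$.
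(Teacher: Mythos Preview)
Your proposal conflates the mechanisms behind the two main theorems of the paper. The Sharifi--Fukaya--Kato description of the Eisenstein quotient via evaluation at the cusp $\infty$ is the engine of the \emph{second} congruence relation (Theorem~\ref{thm2}), where the target is $H^1(\mathbb Q,\mathcal O/\mathfrak p^t(2))$ and one really does compute $\mathrm{ev}_\infty(u\cup v)=\bar c_{\xi_1}\cup\bar c_{\chi_1}$. For Theorem~\ref{thm1} the target is $H^1(\mathbb Q,\mathcal O/\mathfrak p^t(\theta)(1))$, and the projection $\bar\pi_1$ is \emph{not} evaluation at a cusp: it is the composite $T_{f,Y}\to T_{f,Y,\circ}^{\quo}\to\mathcal O/\mathfrak p^t(\theta)$ through the maximal unramified-at-$p$ quotient, rigidified via the Dieudonn\'e-module pairing with Ohta's period $\tilde\eta_{f^*}$ (cf.\ \eqref{pi1-factor} and \eqref{rig-iso}). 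So your first two steps do not land where you claim, and the Fukaya--Kato theorem you invoke does not compute $\bar\kappa_{f,1}$.

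The paper's actual route is a $p$-adic $L$-function comparison carried out entirely on the local side. One lifts $\kappa_f$ to a $\Lambda$-adic class $\kappa_{f,\infty}$ and applies Kato's explicit reciprocity law (Theorem~\ref{kato-law}) to express $\mathcal L_f^-(\res_p\kappa_{f,\infty}^-)$ as a constant times $L_p(f^*,1+s)$. The crucial bridge to circular units---entirely absent from your sketch---is the mod-$\mathfrak p^t$ factorization $L_p(f^*,1+s)\equiv 2\,\zeta_p(-s)\,L_p(\bar\theta,s)$ of Proposition~\ref{fact-l}; this is the only place Fukaya--Kato enters the proof of Theorem~\ref{thm1}, and only through the choice of canonical period $\Omega_f^-$. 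One then recognizes $L_p(\bar\theta)$ as $\mathcal L_\theta(\res_p\kappa_{\theta,\infty})$ via Coleman--Perrin-Riou (Proposition~\ref{Leop}), specializes at $s=1$, and uses that the regulators $\mathcal L_{f,1}^-$ and $\mathcal L_{\theta,1}$ agree mod $\mathfrak p^t$ (Lemma~\ref{pr-iguals}) together with the injectivity of $\mathcal L_{\theta,1}$ (Lemma~\ref{l-theta-iso}) to cancel the regulator and obtain the congruence of local classes. The factor $iN/12$ emerges from $\zeta_p(-1)=(p-1)/12$ and the cancellation of Euler factors guaranteed by (H1), not from a residue computation at $\infty$. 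Finally, your description of $\bar\theta$-regularity is imprecise: it is not triviality of $\mathrm{Cl}(k)[\bar\theta]\otimes\mathbb Z/p\mathbb Z$ but equality in Gras's spiegelungssatz inequality \eqref{Gras-hyp}, which Lemma~\ref{global-local} uses to show that $H^1_{\fin}(\mathbb Q,\mathcal O_{\mathfrak p}(\theta)(1))\to H^1(\mathbb Q_p,\mathcal O_{\mathfrak p}(\theta)(1))$ is an isomorphism.
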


Let us state now our second main result.
As we describe in more detail in \S \ref{first-section}, Kato's class is constructed as
$$
\kappa_{f} = \pi_{f*}(u \cup v) \in H^1(\Q, T_{f,Y}(1)),
$$
namely the push-forward to the $f$-isotypic component of the cup product of two modular units
$$
u = u_{\chi_1,\chi_2} \quad \mbox{and}  \quad v=u_{\bar \chi_1, \theta \bar \chi_2}
$$
whose logarithmic derivative are respectively the classical Eisenstein series $E_2(\chi_1,\chi_2)$ and $E_2(\bar \chi_1, \theta \bar \chi_2)$ given in  \eqref{def-Eis}.
Note that $u_{\chi_1,\chi_2}$ are determined by their logarithmic derivative only up to a multiplicative constant, and therefore the first non-vanishing coefficient in the Laurent expansion of $u_{\chi_1,\chi_2}$ at $\infty$, which we simply denote $u_{\chi_1,\chi_2}(\infty)$ as in \cite[\S 5]{FK}, may be chosen arbitrarily. Since $\Gal(\Q(\mu_N)/\Q)$ acts on $E_2(\chi_1,\chi_2)$ via $\chi_1$ \cite[Theorem 1.3.1]{St-book}, it is natural to normalize $u_{\chi_1,\chi_2}$ likewise, so that  $u_{\chi_1,\chi_2}(\infty)$ may be any power of the circular unit $c_{\chi_1}$. In the literature one finds different normalizations, typically either $u_{\chi_1,\chi_2}(\infty)=1$ or $c_{\chi_1}$. In the statement below we have chosen to normalize the modular units above so that
$$
u_{\chi_1,\chi_2}(\infty) = c_{\chi_1},  \quad u_{\bar \chi_1, \theta \bar \chi_2}(\infty) = c_{\bar \chi_1}
$$
but any other choice would be perfectly fine, upon replacing accordingly the two circular units appearing in the cup-product below.

%\begin{enumerate}
%\item[(H4)] {\em Lifting property:} The Beilinson--Kato element in $H_{\et}^1(\bar Y_1(N),\mathcal O_{\mathfrak p}(1))$ may be lifted to a class in $H_{\et}^1(\bar X_1(N),\mathcal O_{\mathfrak p}(1))$.
%\end{enumerate}

\begin{theorem}\label{thm2}
Assume $\theta$ is primitive of conductor $N$. Suppose further that $\bar \kappa_{f,1} = 0$. Let $L_p(\bar \theta,s)$ denote the Kubota--Leopoldt $p$-adic $L$-function attached to $\bar \theta$ and assume $L_p'(\bar \theta,-1)$ is a $p$-adic unit. The following equality holds in $H^1(\mathbb Q,\mathcal O/\mathfrak p^t(2))$: \[ \bar{\kappa}_{f,2} \equiv \frac{L_p'(\bar \theta,-1)}{1-p^{-1}} \cdot  \frac{\bar c_{\bar \chi_1} \cup \bar c_{\chi_1}}{\cup  \log_p(\varepsilon_{\cyc})} \pmod{\mathfrak p^t}. \]
Here $\varepsilon_{\cyc}$ is the cyclotomic character and $1/\cup  \log_p(\varepsilon_{\cyc})$ denotes the inverse of the map \[ H^1(\mathbb Q, \mathcal O/\mathfrak p^t(2)) \rightarrow H^2(\mathbb Q, \mathcal O/\mathfrak p^t(2)), \quad \kappa \mapsto \kappa \cup \log_p(\varepsilon_{\cyc}), \] which is invertible under our assumptions.
\end{theorem}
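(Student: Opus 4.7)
The strategy is to pair the desired identity with $\log_p(\varepsilon_{\cyc})$ and prove the equivalent identity
\[\bar\kappa_{f,2}\cup\log_p(\varepsilon_{\cyc})\;\equiv\;\frac{L_p'(\bar\theta,-1)}{1-p^{-1}}\cdot\bar c_{\bar\chi_1}\cup\bar c_{\chi_1}\pmod{\mathfrak{p}^t}\]
in $H^2(\mathbb{Q},\mathcal{O}/\mathfrak{p}^t(2))$. Under our running hypotheses, the map $\,\cdot\cup\log_p(\varepsilon_{\cyc}):H^1\to H^2$ is bijective (a short Euler-characteristic argument using (H1) and the assumption that $L_p'(\bar\theta,-1)$ is a $p$-adic unit), so this reformulation loses no information. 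Moreover, by the discussion preceding Theorem~\ref{thm2}, the assumption $\bar\kappa_{f,1}=0$ legitimizes the definition of $\bar\kappa_{f,2}=\bar\pi_{2*}(\bar\kappa_f)$ via the unique lift of $\kappa_f$ to $H^1(\mathbb{Q},T_{f,X}(1))$.

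The core of the argument then follows the blueprint of Theorem~\ref{thm1} but in a higher ``weight''. Starting from $\kappa_f=\pi_{f*}(u\cup v)$ with the normalizations $u(\infty)=c_{\chi_1}$ and $v(\infty)=c_{\bar\chi_1}$, I would use the Ohta--Sharifi rigidification of $\bar\pi_2$ (through the canonical periods recalled in \S\ref{mazur}) to transform $\bar\pi_{2*}\bar\pi_{f*}(\bar u\cup \bar v)$ into a cup-product computation in the cohomology of $\mathbb{Z}[\mu_N,1/Np]$. The Fukaya--Kato integral refinement of Sharifi's conjecture, which encodes cup products of modular units modulo the Eisenstein ideal as cup products of their cuspidal values, then yields the class $\bar c_{\bar\chi_1}\cup\bar c_{\chi_1}$ up to a scalar governed by the Eisenstein congruence \eqref{congruence} between $f$ and $E_2(\theta,1)$.

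Pinning down that scalar as $L_p'(\bar\theta,-1)/(1-p^{-1})$ is the heart of the matter. Through the interpolation formula for Kubota--Leopoldt, the natural scalar attached to $E_2(\theta,1)$ is proportional to $L_p(\bar\theta,-1)$, which vanishes mod $\mathfrak{p}^t$ precisely because of \eqref{congruence}; hence the leading Taylor term contributes $L_p'(\bar\theta,-1)$. The Euler factor $(1-p^{-1})^{-1}$ emerges from the Coleman--Perrin-Riou explicit reciprocity law applied to the circular-unit Euler system, reflecting the Euler factor at $p$ that distinguishes the local trivialization of $T_{f,X}$ from that of $\mathcal{O}_{\mathfrak{p}}(2)$ in the comparison. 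The assumption that $\theta$ is primitive of conductor $N$ is used to avoid interference from old Eisenstein components and to identify the Eisenstein quotient cleanly with $\mathcal{O}/\mathfrak{p}^t$.

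The hard part will be upgrading the Sharifi/Fukaya--Kato correspondence from a mod-$\mathfrak{p}$ (or mod-Eisenstein-ideal) identity to the sharper mod-$\mathfrak{p}^t$ identity required here; this is where hypothesis (H2) becomes indispensable, by controlling the torsion discrepancy between $T_{f,X}$ and $T_{f,Y}$ so that no spurious contribution pollutes the projection to $\mathcal{O}/\mathfrak{p}^t(2)$. A secondary but delicate point is the uniform bookkeeping of canonical periods and of the cuspidal specializations $u\mapsto u(\infty)$, $v\mapsto v(\infty)$, across all the maps involved, so that the constant in the final formula comes out on the nose rather than merely up to a unit.
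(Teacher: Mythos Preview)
Your overall shape is right and matches the paper: reduce to an identity in $H^2(\mathbb{Q},\mathcal O/\mathfrak p^t(2))$ by cupping with $\log_p(\varepsilon_{\cyc})$, invoke the Fukaya--Kato refinement of Sharifi's conjecture to pass from the cup product of modular units to the cup product of their cuspidal values $\bar c_{\bar\chi_1}\cup\bar c_{\chi_1}$, and then pin down the scalar. But you have misidentified the mechanism producing the scalar, and this is not a cosmetic issue.

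In the paper's proof, the evaluation-at-$\infty$ step (Fukaya--Kato's map $\FK_1$) gives the \emph{exact} equality $\FK_1(\bar\kappa_{f,2})=\bar c_{\bar\chi_1}\cup\bar c_{\chi_1}$ in $\mathcal S_\theta$, with no scalar. The whole of $L_p'(\bar\theta,-1)/(1-p^{-1})$ comes from a second Fukaya--Kato map $\FK_2$ and the key identity $b=L_p'(\bar\theta)\cdot a$, where $a=\FK_2\circ\FK_1$ and $b=\,\cup(1-p^{-1})\log_p(\varepsilon_{\cyc})$ are two maps $H^1(\Z[1/Np],\mathcal Q)\to H^2(\Z[1/Np],\mathcal Q)$ (this is \cite[Prop.~9.3.1]{FK}). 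Specializing at the relevant point and using that $\FK_2$ specializes to the identity (since $L_p(\bar\theta,-1)\equiv 0\pmod{\mathfrak p^t}$) yields the formula directly. So $(1-p^{-1})$ is baked into Fukaya--Kato's map $b$; it does not come from Coleman--Perrin-Riou reciprocity for circular units, and the proof does not follow the blueprint of Theorem~\ref{thm1} (no Perrin-Riou regulators or Kato reciprocity enter here). Likewise, the invertibility of $\cup\log_p(\varepsilon_{\cyc})$ in the paper is a corollary of $b=L_p'(\bar\theta)\cdot a$ with $a$ an isomorphism, not an independent Euler-characteristic computation. If you rework the ``pinning down the scalar'' paragraph around the $a,b$ identity rather than around a Taylor expansion of $L_p(\bar\theta,-1)$ and a Perrin-Riou Euler factor, your sketch becomes the paper's proof.
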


In the above statement note that our running assumptions imply that $L_p(\bar \theta,-1) \equiv 0 \pmod{\fp^t}$ and it is thus natural that the first derivative of the Kubota--Leopoldt $p$-adic $L$-function makes an appearance.

Theorems \ref{thm1} and \ref{thm2} are proved in \S \ref{first-section} and \S \ref{second-section} respectively. We hope the methods introduced in this note may help to extend Sharifi's conjectures to other scenarios where the theory of Euler systems has experienced exciting progress in recent years (cf.\,e.g.\,\cite{Tale}, \cite{KLZ}, \cite{LPSZ}, \cite{LSZ}).

%The former exploits a careful analysis of the lattices $T_{f,X}$ and $T_{f,Y}$, a mod $p$ factorization of $p$-adic $L$-functions due to Mazur and Greenberg--Vatsal, and Ohta's work on $p$-adic families of modular forms. The proof of the latter theorem makes use of the ideas introduced by Fukaya and Kato in their work on Sharifi's conjectures.

\vskip 12pt

{\bf Acknowledgements.} We thank P. Wake for guiding us through the delicate subject of integral lattices in Galois representations. We also thank G. Gras for pointing us to suitable references and providing careful explanations of his work. We are also grateful to H. Darmon and R. Sharifi for their comments. %Both authors were supported by Grant MTM2021-???.
This project has received funding from the European Research Council (ERC) under the European Union's Horizon 2020 research and innovation programme (grant agreement No 682152). The first author has also received financial support through ``la Caixa" Fellowship Grant for Doctoral Studies (grant LCF/BQ/ES17/11600010). The second author is supported by Icrea through an Icrea Academia Grant.

\section{Eisenstein series, modular curves and lattices}\label{sec:back}

The aim of this section is recalling well-known facts and setting notations concerning Eisenstein series, models of modular curves and various integral lattices associated to them. We  take the chance to prove some elementary relationships among the latter, which are surely well-known to experts but that we include because we failed to find precise references in the literature.

Fix algebraic closures $\bar\Q$, $\bar\Q_p$ of $\Q$ and $\Q_p$ respectively, and  embeddings of $\bar\Q$ into $\bar\Q_p$ and $\C$. The former singles out a prime ideal $\mathfrak p$ of $\cO$ lying above $p$ and we let $\mathcal O_{\mathfrak p}$ denote the completion of $\mathcal O$ at $\mathfrak p$. We also fix throughout an uniformizer $\varpi$ of $\mathcal O_{\mathfrak p}$ and an isomorphism $\mathbb C_p \simeq \mathbb C$.

Given a variety $Y/\Q$ and a field extension $F/\Q$, let $Y_F = Y \times F$ denote the base change of $Y$ to $F$ and set $\bar Y = Y_{\bar\Q}$.
Fix an integer $N\geq 3$ and let $Y_1(N) \subset X_1(N)$ denote the canonical models over $\mathbb Q$ of the (affine and projective, respectively) modular curves classifying pairs $(A,i)$ where $A$ is a (generalized) elliptic curve
and $i: \mu_N \rightarrow A$ is an embedding of group schemes. It is important to recall that this is not the model used by Fukaya and Kato, as they consider the one which classifies pairs $(A,P)$, where $A$ is a (generalized) elliptic curve and $P$ is an $N$-torsion point of it. In any case, the model of \cite{FK} can be obtained from ours just taking the twist by the cocyle \[ \Gal(\mathbb Q(\mu_N)/\mathbb Q) \rightarrow \Aut(Y), \quad s \mapsto \langle s^{-1} \rangle, \] where $\langle s \rangle$ stands for the diamond operator associated to $s \in (\mathbb Z/N\mathbb Z)^{\times}$.

Let $C_N := X_1(N) \setminus Y_1(N)$ denote the finite scheme of cusps; among them one may distinguish the cusp $\infty \in C_N(\Q)$ associated to Tate's elliptic curve over $\Z((q))$, which is rational over $\Q$ in this choice of model (cf.\,e.g.\,\cite[\S 1.3]{St-book}, \cite{St-TAMS}). (Again, note that in the model of \cite[\S1.3.3]{FK}, cusp $\infty$ is not defined  over $\mathbb Q$ but over $\Q(\mu_N)$.)

Assume now that $F$ contains the values of all Dirichlet characters of conductor $N$. Then a basis of $\Eis_2(\Gamma_1(N),F)$ is indexed by triples $(\chi_1,\chi_2,r)$ where $\chi_1$ and $\chi_2$ are primitive Dirichlet characters of conductors $N_1$ and $N_2$ with $N_1 \cdot N_2 \, \mid \, N$, $\chi_1(-1)=\chi_2(-1)$, and $r$ is a positive integer with $1 < rN_1N_2 \mid N$, provided by the Eisenstein series (cf.\,e.g.\,\cite[Theorem 4.6.2]{DS}, \cite[Def.\,3.4.1]{St-book}):
\begin{equation}\label{def-Eis}
E_2(\chi_1,\chi_2,r) = a_0 + \sum_{n=1}^{\infty} \Big( \sum_{d|n} \chi_1(n/d) \chi_2(d) d \Big) q^{rn}, \quad a_0= \begin{cases} \frac{L(\chi_2,-1)}{2} & \mbox{ if } \chi_1=1 \\ 0 & \mbox{ if } \chi_1\ne 1 \end{cases}
\end{equation}
unless $\chi_1=\chi_2=1$, in which case $E_2(1,1,r) = \sum_{n=1}^{\infty} \Big( \sum_{d|n} d \Big) q^{n} - r\sum_{n=1}^{\infty} \Big( \sum_{d|n} d \Big) q^{rn}$.

When $r=1$ we shall simply denote $E_2(\chi_1,\chi_2):= E_2(\chi_1,\chi_2,1)$.

When $\chi_1=1$, the constant term may also be recast as a generalized Bernoulli number: setting $B_2(x) = x^2-x+1/6$, define
$$
B_2(\chi) := N \sum_{a=1}^{N-1} \chi(a) \cdot B_2(a/N)
$$
for any Dirichlet character $\chi$ of conductor $N$. One then has $-2 L(\chi_2,-1) = B_2(\chi_2)$.

Define the group of modular units $U(N)$ as the subgroup of rational functions of $X_1(N)_{\Q(\mu_N)}$ with zeroes and poles concentrated at the cusps, that is to say $$U(N) = \cO(Y_1(N)_{\Q(\mu_N)})^\times.$$
Similarly as in \eqref{units-iso}, let $U(N)[\chi]$ denote the $\chi$-isotypic component of $U(N)\otimes \cO_{\mathfrak{p}}$ on which $\Gal(\Q(\mu_N)/\Q)$ acts through the character $\chi$. In light of \cite[Theorem 1.3.1]{St-book}, there exists a  modular unit  $u_{\chi_1,\chi_2} \in U(N)[\chi_1]$ satisfying
\begin{equation}\label{mod-unit}
\dlog(u_{\chi_1,\chi_2}) = E_2(\chi_1,\chi_2)\frac{dq}{q} \quad \mbox{and} \quad u_{\chi_1,\chi_2}(\infty) = c_{\chi_1}.
\end{equation}

The $q$-expansion of the modular units $u_{\chi_1,\chi_2}$ can be written down explicitly. Given a pair of integers $(a,b)$ between $0$ and $N-1$, not both equal to 0, define the Siegel unit  \[ u_{a,b;N} = q^w \prod_{n \geq 0} \Big( 1-q^{n+a/N} \zeta_N^b \Big) \prod_{n \geq 1} \Big( 1-q^{n-a/N}\zeta_N^{-b} \Big), \] where $w=\frac{1}{12}-\frac{a}{N}+\frac{a^2}{2N^2}$. Then the $q$-expansion of the modular unit $u_{\chi_1,1}$ is given by
\begin{equation}\label{def-siegel}
u_{\chi_1,1} = \frac{-1}{2 \mathfrak g(\bar \chi_1)} \sum_{b=1}^{N-1} \bar \chi_1(b) \otimes u_{0,b;N},
\end{equation}
where here $N$ stands for the conductor of $\chi_1$. Although we will not use them here in this note, similar expressions can be given for $u_{\chi_1,\chi_2}$ for arbitrary $\chi_2$ by averaging $u_{a,b;N}$ and choosing an appropriate uniformizer.

Kummer theory induces a morphism
\begin{equation}
\delta: U(N)[\chi] \rightarrow H_{\et}^1(Y_1(N),\cO_{\mathfrak p}(\bar \chi)(1)).
\end{equation}

Let $\mathbb{T} \subset \End \, H_{\et,c}^1(\overline{Y}_1(N),\mathcal O_{\mathfrak p}(1))$ denote the Hecke algebra acting on the compactly-supported cohomology of the open modular curve generated by the standard Hecke operators  $T_\ell$   for every (good or bad) prime $\ell$ let--commonly denoted $U_\ell$ at primes $\ell \mid N$. Let also $\mathbb{T}^* \subset \End \, H_{\et}^1(\overline{Y}_1(N),\mathcal O_{\mathfrak p}(1))$ denote the Hecke algebra acting on the cohomology of the open modular curve generated by the dual Hecke operators $T_\ell^*$ as defined in \cite[\S 3.4]{Oh1}, \cite[Def. 2.4.3]{KLZ} for every prime $\ell$.

Recall from the introduction the newform $f\in S_2(N,\theta)$ satisfying the congruence $f \equiv E_2(\theta,1) \,\, \mathrm{mod} \, \mathfrak p^t$. Let $f(q)= \sum a_n(f) q^n$ denote its $q$-expansion at the cusp $\infty$. Enlarge $F$ so that it also contains the eigenvalues $\{ a_n(f)\}_{n\geq 1}$, let $\cO$ denote its ring of integers and let $\cO_{\fp}$ denote its completion at the prime ideal $\mathfrak{p}$ over $p$ fixed in the introduction.

Let
%$I_f = (T_\ell - a_\ell(f)) \subset \mathbb{T}$ and
$I_f^* = (T^*_\ell-a_\ell(f)) \subset \mathbb{T}^*$ denote the ideal associated to the system of eigenvalues of $f$ with respect to the dual Hecke operators. Define the $\cO_{\fp}$-modules
%M_f= H_{\et,c}^1(\overline{Y}_1(N),\mathcal O_{\mathfrak p})[I_f] := \cap_{\ell} \,\, \Ker(T_\ell-a_\ell(f)) \subset H_{\et,c}^1(\overline{Y}_1(N),\mathcal O_{\mathfrak p}(1))
\begin{equation}\label{defTf}
T_{f,X} = H_{\et}^1(\overline{X}_1(N),\mathcal O_{\mathfrak p}(1))/ I^*_f, \quad T_{f,Y} = H_{\et}^1(\overline{Y}_1(N),\mathcal O_{\mathfrak p}(1))/ I^*_f.
\end{equation}

Note that the two lattices $T_{f,X}$ and  $T_{f,Y}$ may give rise to completely different $\cO_{\fp}[G_{\Q}]$-modules in spite of the fact that the associated rational Galois representations $$V_f := T_{f,X} \otimes F_{\fp} \simeq T_{f,Y} \otimes F_{\fp}$$ are isomorphic.

%As it is recalled e.g.\,in \cite[Section 2.8]{KLZ}, Poincar\'e duality yields a perfect pairing of finitely generated free $\Z_p$-modules
%$$
%\langle \,, \rangle:  H_{\et,c}^1(\overline{Y}_1(N),\Z_p) \times H_{\et}^1(\overline{Y}_1(N),\Z_p(1)) \lra \Z_p
%$$
%which in turn induces a perfect pairing of $\cO_{\fp}$-modules
%\begin{equation}\label{poincare}
%\langle \,, \rangle: M_{f} \times T_f \lra \mathcal O_{\mathfrak p}(1).
%\end{equation}

%Recall that the Gorenstein property (H3) implies that both $T_{f,X}$ and $T_{f,Y}$ are $\mathcal O_{\mathfrak p}[G_{\Q}]$-modules, free of rank two over $\mathcal %O_{\mathfrak p}$.

\begin{proposition}\label{ses-y} The natural inclusion $X \hookrightarrow Y$ induces by push-forward in cohomology a map $T_{f,X} \ra T_{f,Y}$ that sits in an exact sequence of $\cO_{\mathfrak{p}}[G_{\mathbb Q}]$-modules
$$
0 \ra T_{f,X} \lra T_{f,Y} \stackrel{\pi_1}{\lra} \mathcal O/\mathfrak p^t(\theta) \ra 0.
$$
\end{proposition}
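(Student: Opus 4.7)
The plan is to deduce the sequence from the étale excision/Gysin long exact sequence for the inclusion $j\colon Y_1(N)\hookrightarrow X_1(N)$ with closed complement the cusp scheme $C_N$, combined with the explicit Hecke and Galois module structure of the cusps due to Ohta and Stevens, and finally to localize at the Eisenstein maximal ideal determined by $f$.

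First I would write down the Gysin sequence. Since $C_N$ is smooth of codimension one in $X_1(N)$ and $\mathbb G_m$-acyclic, the purity isomorphism $H^{i}_{C_N}(\bar X_1(N),\mathcal O_{\mathfrak p}(1))\simeq H^{i-2}(\bar C_N,\mathcal O_{\mathfrak p})$ together with the vanishing of $H^2_{\et}$ of the affine curve $\bar Y_1(N)$ yields a short exact sequence of $\mathcal O_{\mathfrak p}[G_{\mathbb Q}][\mathbb T^*]$-modules
\begin{equation*}
0\longrightarrow H^1_{\et}(\bar X_1(N),\mathcal O_{\mathfrak p}(1))\longrightarrow H^1_{\et}(\bar Y_1(N),\mathcal O_{\mathfrak p}(1))\longrightarrow \mathcal O_{\mathfrak p}[C_N]^{0}\longrightarrow 0,
\end{equation*}
where $\mathcal O_{\mathfrak p}[C_N]^{0}$ is the kernel of the augmentation $\mathcal O_{\mathfrak p}[C_N]\to\mathcal O_{\mathfrak p}=H^2_{\et}(\bar X_1(N),\mathcal O_{\mathfrak p}(1))$. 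The crucial point here is that the Gysin shift kills the Tate twist on the cusp contribution, so $\mathcal O_{\mathfrak p}[C_N]^{0}$ appears untwisted.

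Next I would invoke the classical description (Ohta \cite{Oh1}, Stevens, and \cite[\S1.4]{FK}) of the cusp module for the $\mu_N$-model. Under the simultaneous action of $\mathbb T^*$ and $\Gal(\mathbb Q(\mu_N)/\mathbb Q)$, the module $\mathcal O_{\mathfrak p}[C_N]^{0}$ decomposes along pairs $(\psi_1,\psi_2)$ of Dirichlet characters of conductors dividing $N$ with $\psi_1\psi_2$ of conductor dividing $N$, the $(\psi_1,\psi_2)$-isotypical summand being characterised by $T_\ell^*\equiv \psi_1(\ell)+\ell\,\psi_2(\ell)$ and Galois acting via $\psi_1$. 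Translating the congruence $f\equiv E_2(\theta,1)\pmod{\mathfrak p^t}$ into a comparison of Hecke eigensystems shows that modulo $\mathfrak p^t$ the ideal $I_f^*$ coincides with the Eisenstein ideal cutting out the pair $(\theta,1)$. Consequently, applying $-/I_f^*$ to the cusp module yields the canonical identification
\begin{equation*}
\mathcal O_{\mathfrak p}[C_N]^{0}/I_f^*\ \simeq\ \mathcal O/\mathfrak p^t(\theta),
\end{equation*}
which gives the third term of the claimed sequence and, by right-exactness of $-/I_f^*$, the surjection $\pi_1$ together with the exactness $T_{f,X}\to T_{f,Y}\to \mathcal O/\mathfrak p^t(\theta)\to 0$.

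The remaining and only delicate step is the injectivity of $T_{f,X}\to T_{f,Y}$. Since quotienting by $I_f^*$ is not left-exact, a priori this map has a kernel equal to $\mathrm{Tor}_1^{\mathbb T^*}(\mathcal O_{\mathfrak p}[C_N]^{0},\mathbb T^*/I_f^*)$. I expect this to be the main obstacle: any such kernel must be $\mathfrak p^t$-torsion, and in fact must be a quotient of the relative torsion $\Sigma_Y/\Sigma_X$ between the two lattices; by its very shape it can only contribute a copy of $\mathcal O/\mathfrak p^t(\theta)$. This is precisely what hypothesis (H2) is designed to rule out. Once injectivity is established on the $G_{\mathbb Q_p}$-level via (H2), it propagates globally, completing the exact sequence. (Under the stronger Gorenstein hypothesis on the Eisenstein-localized Hecke algebra the Tor vanishes automatically, and the argument simplifies accordingly.)
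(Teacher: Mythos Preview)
Your overall strategy---Gysin sequence, Eisenstein description of the cusp module, then quotient by $I_f^*$---is exactly the paper's. The identification $\mathcal O_{\mathfrak p}[C_N]^0/I_f^*\simeq \mathcal O/\mathfrak p^t(\theta)$ is carried out in the paper just as you sketch, using multiplicity one for the Eisenstein eigensystem $(\theta,1)$ under $p\nmid\varphi(N)$.

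Where you diverge from the paper is the injectivity of $T_{f,X}\to T_{f,Y}$, and here there is a genuine gap. The paper proves this step \emph{unconditionally}, without invoking (H2). The argument is elementary: by Manin--Drinfeld the sequence $0\to H^1(X)\to H^1(Y)\to\mathrm{Div}^0[C_N]\to 0$ splits Hecke-equivariantly after inverting $p$, and since $\mathrm{Div}^0[C_N]$ is $\mathcal O_{\mathfrak p}$-free one has $H^1(Y)\cap H^1(X)[1/p]=H^1(X)$ inside $H^1(Y)[1/p]$. Combining these, any element of $I_f^*H^1(Y)\cap H^1(X)$ already lies in $I_f^*H^1(X)$, which is exactly the vanishing of the kernel. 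No Tor computation and no auxiliary hypothesis is needed.

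Your appeal to (H2) is misplaced in two respects. First, (H2) is a \emph{local} condition at $p$ on the torsion submodules $\Sigma_X,\Sigma_Y$ of $T_{f,X}^{\mathrm{quo}},T_{f,Y}^{\mathrm{quo}}$; it is used later in the paper (around \eqref{pi1-factor}) to ensure that $\pi_1$ factors through the free quotient $T_{f,Y,\circ}^{\mathrm{quo}}$, not to establish the exact sequence itself. Second, your claim that the kernel of $T_{f,X}\to T_{f,Y}$ ``must be a quotient of $\Sigma_Y/\Sigma_X$'' is not justified: the $\mathrm{Tor}_1^{\mathbb T^*}$ term you write down lives in a different world (it is governed by the $\mathbb T^*$-module structure of the cusp divisor group, not by the local filtration at $p$), and there is no evident map relating it to $\Sigma_Y/\Sigma_X$. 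So even granting (H2), your argument would not close the gap. Replace this step with the Manin--Drinfeld plus torsion-freeness argument and your proof will match the paper's.
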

\begin{proof}
Write for short $H^1(X) = H_{\et}^1(\bar X_1(N),\cO_{\mathfrak{p}}(1))$ and $H^1(Y) = H_{\et}^1(\bar Y_1(N),\cO_{\mathfrak{p}}(1))$. As it is shown for instance in \cite[\S 1.8]{St-book}, there is a short exact sequence of the form
\begin{equation}\label{xye}
0 \longrightarrow H^1(X) \longrightarrow H^1(Y) \longrightarrow \Div^0[C_N] \longrightarrow 0,
\end{equation}
where $\Div^0[C_N]$ is the free $\cO_{\mathfrak{p}}$-module of degree $0$ divisors supported on $C_N$ with coefficients in $\cO_{\mathfrak{p}}$. Since the map $H^1(X) \hookrightarrow H^1(Y)$ induces an injection \[ H^1(X)/(I_f^* \cdot H^1(Y) \cap H^1(X))  \hookrightarrow H^1(Y)/I_f^* \cdot H^1(Y), \]
the exactness on the left follows once we show that
\begin{equation}
I_f^* \cdot H^1(Y) \cap H^1(X) = I_f^* \cdot H^1(X).
\end{equation}
The inclusion $I_f^* H^1(X) \subset I_f^*  H^1(Y) \cap H^1(X)$ is clear. As for the opposite one, take an element $t \alpha \in I_f^*  H^1(Y) \cap H^1(X)$ with $t \in I_f^*$ and $\alpha \in H^1(Y)$. After inverting $p$, there is an isomorphism \[ H^1(Y)[1/p] \simeq H^1(X)[1/p] \oplus \Div^0[C_N][1/p], \] since \eqref{xye} is split after tensoring with $\mathbb Q_p$ by Manin-Drinfeld's theorem. We may thus write $\alpha = \beta + \gamma$, where $\beta \in H^1(X)[1/p]$ and $\gamma \in \Div^0[C_N][1/p]$. Since $t \alpha \in H^1(X)$, we have  $t  \gamma = 0$, and thus  $t  \alpha = t \beta$ in $I_f^*(H^1(Y) \cap H^1(X)[1/p])$.

Note that $H^1(Y) \cap H^1(X)[1/p] = H^1(X)$. Indeed, otherwise there would exist an element $y \in H^1(Y)\setminus H^1(X)$ such that $p^u y \in H^1(X)$ for some $u$. This would imply that $H^1(Y)/H^1(X)$ contains  non-trivial torsion, but this quotient is isomorphic to $\Div^0[C_N]$, which is free.
It thus follows that $t  \alpha $ lies in $I_f^* H^1(X)$, and this  proves exactness on the left.

\vspace{0.1cm}

In order to conclude the proof of the proposition, let us now show that $T_{f,Y}/T_{f,X} \simeq  \mathcal O/\mathfrak p^t(\theta)$. As it follows from the above, \eqref{xye} induces an exact sequence
\begin{equation}
0 \longrightarrow T_{f,X} \longrightarrow T_{f,Y} \longrightarrow \Div^0[C_N]/I_f^*  \Div^0[C_N] \longrightarrow 0.
\end{equation}

%Hence $T_{f,Y}/T_{f,X}$ is isomorphic to the maximal quotient of $\Div^0[C_N]$ on which the Hecke action is given by the system of eigenvalues of $f$.
The Hecke action on $\Div^0[C_N]$ is Eisenstein, and the eigenvalues of  $T^*_{\ell}$ are $\chi_1(\ell)+\ell \cdot \chi_2(\ell)$, where $(\chi_1, \chi_2)$ range through pairs of Dirichlet characters of the same parity,  not both trivial,  whose product of conductors is $N$ (as described e.g. in \cite[\S 1.3]{St-book}).

Our running assumptions imply that $E_2(\theta,1)$ occurs with multiplicity one in  $\Div^0[C_N]/\mathfrak p^t$.  Indeed, otherwise there would exist a pair of Dirichlet characters $(\xi_1, \xi_2)$ as above such that $E_2(\theta,1) \equiv E_2(\xi_1,\xi_2) \pmod{\mathfrak p}$. This amounts to saying that $\bar \theta \xi_1 \equiv \xi_2 \equiv 1$ (mod $\mathfrak p^t$), and this is only possible when $(\xi_1,\xi_2) = (\theta,1)$ because $p\nmid \varphi(N)$.

Since $f \equiv E_2(\theta,1)$  (mod $\mathfrak p^t$), it follows that $T_{f,Y}/T_{f,X} \simeq \cO/\mathfrak p^t$ as $\cO_{\mathfrak{p}}$-modules. The action of $G_{\Q}$ on $T_{f,Y}/T_{f,X} $ is given by the character $\theta$ by \cite[Theorem 1.3.1]{St-book}.
\end{proof}

Since $f$ is ordinary at $\mathfrak{p}$, it is well-known (cf.\,e.g.\,\cite[1.7]{FK} for a twisted version that boils down to the one below with our normalizations) that there are exact sequences of finitely generated $\cO_{\mathfrak{p}}[G_{\mathbb Q_p}]$-modules
\begin{eqnarray}\label{filtracio}
0 \rightarrow T_{f,X}^{\sub} \rightarrow T_{f,X} \rightarrow T_{f,X}^{\quo} \rightarrow 0 \\ \nonumber
0 \rightarrow T_{f,Y}^{\sub} \rightarrow T_{f,Y} \rightarrow T_{f,Y}^{\quo} \rightarrow 0
\end{eqnarray}
such that
\begin{enumerate}

\item[(i)] $T_{f,X}^{\quo}$  and $T_{f,Y}^{\quo}$ are unramified as $G_{\mathbb Q_p}$-modules.

\item[(ii)] The map $T_{f,X} \ra T_{f,Y}$ induces an isomorphism $T_{f,X}^{\sub}=T_{f,Y}^{\sub}$ of free $\cO_{\fp}$-modules of rank $1$ on which $G_{\mathbb Q_p}$ acts through the cyclotomic character.

%\item[(iii)] Poincar\'e duality induces a perfect pairing between $M_f^{\quo}$  and $T_f^{\sub}$, and likewise between $M_f^{\sub}$  and $T_f^{\quo}$.
\end{enumerate}

Note that $T_{f,X}^{\quo}$  and $T_{f,Y}^{\quo}$ are finitely generated $\cO_{\mathfrak{p}}$-modules such that $V_{f}^{\quo} := T_{f,X}^{\quo}\otimes F_{\mathfrak{p}} \simeq T_{f,Y}^{\quo}\otimes F_{\mathfrak{p}}$ are $1$-dimensional over $F_{\mathfrak{p}}$. In general though  $T_{f,X}^{\quo}$  and $T_{f,Y}^{\quo}$ are not necessarily free and we let $\Sigma_X$ and $\Sigma_Y$, respectively, their torsion submodules.

%It is only under assumption (H3) that one can ensure that both  $T_{f,X}^{\quo}$  and $T_{f,Y}^{\quo}$ have rank $1$.
Let $\alpha_f\in \cO_{\mathfrak p}^\times$ denote the unit root of the $p$-th Hecke polynomial of $f$.
%if $\varphi=E_2(\chi_1,\chi_2)$ is Eisenstein, set $\alpha_\varphi = \chi_1(p)$.
Let $\psi_f: G_{\Q_p} \lra \cO_{\mathfrak p}^\times$ denote the unramified character characterized by $\psi_f(\Frob_p)=\alpha_f$, so that there is an isomorphism of $F_{\mathfrak p}[G_{\Q_p}]$-modules
\begin{equation}\label{psif}
V_f^{\quo} \simeq F_{\mathfrak p}(\psi_f).
\end{equation}

Given a $\cO_{\mathfrak{p}}$-module $T$, set $\bar T = T \otimes \cO/\mathfrak{p}^t$. For a $G_{\Q}$-module $T$ we let $T^{\pm}$ denote the submodule on which complex conjugation acts as $\pm 1$. As shown by Sharifi in \cite[Theorem 4.3]{Sh} and by Fukaya-Kato in \cite[\S  6.3.1, \S 7.1.11]{FK},
%$T_{f,X}^{\pm}/\mathfrak p^t$ and $T_{f,Y}^{\pm}/\mathfrak p^t$ are $G_{\Q}$-stable $\cO/\mathfrak{p}^t$-modules.
there is an exact sequence of $\cO/\mathfrak{p}^t[G_{\Q}]$-modules
\begin{equation}\label{ses-fk0}
0 \longrightarrow \bar T_{f,X}^+ \longrightarrow \bar T_{f,X} \longrightarrow \bar T_{f,X}^- \longrightarrow 0,
\end{equation}
Note the switch of signs between \eqref{ses-fk0} and \cite[\S 6.3.1, \S 7.1.11]{FK}, which is due to the different Tate twist adopted in the definition of $T_{f,X}$. As explained e.g.\,in \cite[\S 2.5.5]{FKS}
 there are isomorphisms of $\cO/\mathfrak{p}^t[G_{\Q_p}]$-modules
 \begin{equation}\label{isos}
\bar T_{f,X}^+ \simeq \bar T_{f,X}^{\quo}, \quad \bar T_{f,X}^{\sub} \simeq \bar T_{f,X}^-.
\end{equation}
The first isomorphism is given by the composition of the inclusion in \eqref{ses-fk0} with the projection map in \eqref{filtracio} mod $\mathfrak p^t$; the second one is given by the analogous composition obtained by switching the roles of \eqref{ses-fk0} and \eqref{filtracio}. In particular $\bar T_{f,X}^-$ is free of rank $1$ over $\cO/\mathfrak{p}^t$.

As for the lattice associated to the open modular curve, let
\begin{equation}\label{Tf-free}
T_{f,Y,\circ}^{\quo} := T_{f,Y}^{\quo}/\Sigma_Y \simeq \cO_{\mathfrak p}(\psi_f)
\end{equation}
denote the free quotient. The latter isomorphism follows from \eqref{psif}.

Since $f \equiv E_2(\theta,1) \,\mbox{mod}\,\mathfrak p^t$, we have
\begin{equation}\label{Vfcong}
T_{f,Y,\circ}^{\quo} \otimes \mathcal O/\mathfrak p^t \simeq \mathcal O/\mathfrak p^t(\theta),
\end{equation}
which amounts to the congruence $\psi_f \equiv \theta \, (\mbox{mod}\,\mathfrak p^t)$ as unramified characters of $G_{\Q_p}$.

Note that $\pi_1$ factors through  $T_{f,Y}^{\quo}$ because the latter is the maximal unramified $G_{\Q_p}$-quotient of $T_{f,Y}$. Assuming also hypothesis (H2) from the introduction, it follows that $\pi_1$ factors further through the natural projection  $T_{f,Y} \lra T_{f,Y,\circ}^{\quo}$ and may thus be written as
 the following composition of $\cO_{\mathfrak{p}}[G_{\Q_p}]$-modules
\begin{equation}\label{pi1-factor}
\pi_1: \quad T_{f,Y} \longrightarrow T_{f,Y,\circ}^{\quo} \longrightarrow \bar{T}_{f,Y,\circ}^{\quo} \stackrel{\sim}{\lra} \cO/\mathfrak{p}^t(\theta).
\end{equation}
While the first and second arrows are canonical projection maps, the latter isomorphism is non-canonical and depends on the choice of a unit of $\cO/\mathfrak{p}^t$.

\section{First congruence relation}\label{first-section}

Keep the notations and assumptions fixed in the introduction concerning the first congruence relation. Here we shall mainly work with the integral Galois representation associated to the {\em open} modular curve, and hence throughout this section we abbreviate $$T_f := T_{f,Y}.$$

 We begin by recalling more precisely the definition of Kato classes. Choose auxiliary Dirichlet characters $\chi_1$, $\chi_2$ as in the introduction and set $\xi_1 = \bar \chi_1$, $\xi_2 = \theta \bar \chi_2$. By \cite[(1.2)]{Nek}, together with the fact that $H_{\et}^j(\bar V,\mathcal O_{\mathfrak p})$ vanishes for any smooth affine variety $V$ of dimension $d$ and any $j>d$, the Hochschild--Serre's spectral sequence gives rise to an isomorphism
\begin{equation}\label{HS-iso}
H_{\et}^2(Y_1(N), \mathcal O_{\mathfrak p}(2)) \simeq H^1(\mathbb Q, H_{\et}^1(\overline{Y_1(N)}, \mathcal O_{\mathfrak p}(2))).
\end{equation}

In view of \eqref{defTf} there is a $G_{\Q}$-equivariant projection
\[ \pi_f: H_{\et}^1(\overline{Y}_1(N),\mathcal O_{\mathfrak p}(1)) \rightarrow T_f, \]
which in turn gives rise to a homomorphism
\begin{equation}\label{f-isotypic}
\pi_{f*}:  H_{\et}^2(Y_1(N), \mathcal O_{\mathfrak p}(2)) \lra H^1(\mathbb Q, T_{f}(1)).
\end{equation}

It thus makes sense to define
$$
\kappa_{f} := \pi_{f*}(  \delta(u_{\xi_1,\xi_2}) \cup \delta(u_{\chi_1,\chi_2}) )\in H^1(\mathbb Q, T_{f}(1)).
$$

Note that $f$ is ordinary at $p$ because of \eqref{congruence} and label the roots of the $p$-th Hecke polynomial of $f$ as $\alpha_{f}$, $\beta_{f}$ so that $\ord_p(\alpha_f)=0$ and $\ord_p(\beta_f)=1$.

Let $\mathcal W$ denote the weight space defined as the formal spectrum of the Iwasawa algebra $\Lambda=\mathcal O_{\mathfrak p}[[\mathbb Z_p^{\times}]]$. The set of classical points in $\cW$ is given by characters $\nu_{s,\xi}$ of the form $\xi \varepsilon_{\cyc}^{s-1}$ where $\xi$ is a Dirichlet character of $p$-power conductor and $s$ is an integer; this forms a dense subset in $\cW$ for the Zariski topology. Let $\mathcal W^{\circ}$ further denote the set of those points with $\xi=1$; we shall often write $s$ in place of $\nu_s = \nu_{s,1}$. Let $\mathcal W^{\pm} \subset \mathcal W$ denote the topological closure of the set of points $\xi \varepsilon_{\cyc}^{s-1}$ with $(-1)^{s-1} \xi(-1) = \pm 1$. We have $\cW= \cW^+ \sqcup \cW^-$ and we write $\Lambda = \Lambda^+ \oplus \Lambda^-$ for the corresponding decomposition of the Iwasawa algebra.

Let $\psi$ be an even Dirichlet character and let $L_p(f^*,\psi)$ denote the Mazur--Tate--Teitelbaum $p$-adic $L$-function associated to $(f^*,\psi)$.
As discussed in \cite[\S 4.5]{FK},  $L_p(f^*,\psi)$  depends on the choice of two complex periods $(\Omega_f^+, \Omega_f^-)$, which in turn are determined by the choice of generators $\delta^{\pm}$ of $V_f^{\pm} = T_{f,X}^{\pm}\otimes \Q$ as
\begin{equation}\label{Omegas}
\Omega_f^\pm = \int_{\delta^{\pm}} \omega_f.
\end{equation}
The $p$-adic $L$-function $L_p(f^*,\psi)$ is  a rigid-analytic function on $\cW$  characterized by the following  formula interpolating classical $L$-values: let $\xi \, : \, (\mathbb Z/p^n \mathbb Z)^{\times} \longrightarrow \bar{\mathbb Q}^{\times}$ be a homomorphism which does not factor through $(\mathbb Z/p^{n-1}\mathbb Z)^{\times}$. Then
\begin{equation}\label{interpolation}
L_p(f^*,\psi)(\xi \varepsilon_{\cyc}) = \begin{cases}  \mathfrak g(\bar \psi) (1-\psi(p) \alpha_f^{-1})(1-\theta \bar \psi(p) \alpha_f^{-1}) \frac{L(f^*,\psi,1)}{\Omega_f^+} & \mbox{ if } \, \xi=1 \\
 \alpha_f^{-n} \mathfrak g(\bar \psi \bar \xi) \frac{L(f^*,\psi \xi,1)}{\Omega_f^{\pm}}  \mbox{ with } \pm = \xi(-1) & \mbox{ if } \, n\geq 1.
\end{cases}
\end{equation}
The set of characters of the form $\xi \varepsilon_{\cyc}$ as $\xi$ ranges through all Dirichlet characters of arbitrary $p$-power conductor is dense within $\cW$ and hence  $L_p(f^*,\psi)$ is uniquely determined by \eqref{interpolation}. More classically, one can also view $L_p(f^*,\psi)$ as a one-variable $p$-adic $L$-function by setting $L_p(f^*,\psi,s) = L_p(f^*,\psi)(\varepsilon_{\cyc}^s)$.

Define the Euler-like factor
\begin{eqnarray}\label{Eulerf}
\mathcal E_f = (1- \alpha_{f})(1-\beta_{f})(1-\bar \theta \bar \chi_1 \chi_2(p) \beta_{f} p^{-1})(1- \chi_1 \bar \chi_2(p) \beta_{f}  p^{-1}).
\end{eqnarray}
Define also the $p$-adic $L$-value
\[ \ell = -iN \cdot \frac{\mathfrak g(\theta \chi_1 \bar \chi_2)}{\mathfrak g(\theta) \mathfrak g(\chi_1 \bar \chi_2)} \cdot (1-\theta(p)) \cdot \zeta_p(-1) \cdot L_p(f^*,\bar \chi_1 \chi_2,1),
\]

Thanks to Proposition \ref{ses-y}, we can define as in the introduction the class \[ \bar \kappa_{f,1} := \pi_{1*}( \kappa_{f})  = \bar\pi_{1*}( \bar\kappa_{f}) \in H^1(\mathbb Q, \mathcal O/\mathfrak p^t(\theta)(1)). \]

\begin{theorem}\label{unteo} (First congruence relation)
Assuming (H1)-(H2) we have  \[ \mathcal E_{f} \cdot  \bar \kappa_{f,1} \equiv \ell \cdot \bar c_{\theta} \quad \mbox{ in }  \, H^1(\mathbb Q_p, \mathcal O/\mathfrak p^t(\theta)(1)). \]
If $p$ is $\bar\theta$-regular as described in \eqref{Gras-hyp}, this equality takes place in $H^1(\mathbb Q, \mathcal O/\mathfrak p^t(\theta)(1))$.
\end{theorem}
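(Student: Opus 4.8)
The plan is to prove Theorem \ref{unteo} by realizing $\bar\kappa_{f,1}$ as the specialization at $s=1$ of a two-variable object built from Kato's Euler system and the $p$-adic regulator, and then matching that specialization against the known explicit reciprocity law (Kato/Perrin-Riou) which relates Beilinson--Kato classes to the Mazur--Tate--Teitelbaum $p$-adic $L$-function $L_p(f^*,\bar\chi_1\chi_2)$. Concretely, I would first push $\kappa_f = \pi_{f*}(\delta(u_{\xi_1,\xi_2})\cup\delta(u_{\chi_1,\chi_2}))$ through the composition \eqref{pi1-factor}, so that $\bar\kappa_{f,1}$ becomes the image of $\kappa_f$ under $T_{f,Y}\to T_{f,Y,\circ}^{\quo}\to\bar T_{f,Y,\circ}^{\quo}\xrightarrow{\sim}\cO/\mathfrak p^t(\theta)$. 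Since this is projection to the \emph{unramified quotient} followed by reduction mod $\mathfrak p^t$, the image factors through the Bloch--Kato/finite part of $H^1(\Q_p,-)$, i.e. through the ``Coleman/dual-exponential'' side of the local cohomology. The key input here is (H2), guaranteeing that $\pi_1$ really does factor through the \emph{free} quotient $T_{f,Y,\circ}^{\quo}\simeq\cO_{\fp}(\psi_f)$ with $\psi_f\equiv\theta$ mod $\mathfrak p^t$, so the target is literally $\cO/\mathfrak p^t(\theta)$ and the reduction of $\psi_f$ is exactly $\theta$.

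Next I would identify, on the level of $V_f$ (before reduction), the projection $\res_p(\kappa_f)$ onto $V_f^{\quo}(1)\simeq F_{\fp}(\psi_f\varepsilon_{\cyc})$ with a value of the Perrin-Riou big logarithm / Coleman map applied to Kato's Euler system class; by Kato's explicit reciprocity law (as packaged, e.g., in \cite[\S 9]{KLZ} or \cite{Kato}), this value is $\mathcal E_f^{-1}$ times the period-normalized value $L_p(f^*,\bar\chi_1\chi_2,1)$ up to an explicit product of Gauss sums and the factor $iN$ coming from the normalization of the Siegel units $u_{\chi_1,\chi_2}(\infty)=c_{\chi_1}$ and the Rankin--Eisenstein/Beilinson integral. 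This is precisely where the Euler factor $\mathcal E_f$ of \eqref{Eulerf} and the constant in $\ell$ (including the $\zeta_p(-1)=L_p(\mathbf 1,-1)$-type term and the $(1-\theta(p))$ factor) must come out; I would track these through the interpolation formula \eqref{interpolation} at the near-central point. The appearance of $\bar c_\theta$ on the right-hand side is then forced: under the congruence $f\equiv E_2(\theta,1)$ mod $\mathfrak p^t$, the image of Kato's class in the unramified quotient degenerates mod $\mathfrak p^t$ to the corresponding Soulé/circular-unit class for the Eisenstein series, which by Coleman's and Perrin-Riou's work on the Euler system of cyclotomic units is exactly (the Kummer image of) $c_\theta$ — this is the Sharifi/Fukaya--Kato input, feeding in the canonical rigidification \eqref{rig-iso}.

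**The main obstacle**, as I see it, is controlling the \emph{integral} statement — keeping everything honest mod $\mathfrak p^t$ rather than just after inverting $p$. The lattices $T_{f,X}$, $T_{f,Y}$ need not be free, the periods $\Omega_f^\pm$ are only defined up to $p$-adic units and must be pinned down via Ohta's and Sharifi's canonical-period normalization so that $L^{\mathrm{alg}}(f^*,\bar\chi_1\chi_2,1)\in\cO$, and the explicit reciprocity law is a priori an identity in a $p$-adically complete module that one must show descends to $\cO/\mathfrak p^t$ without loss. Hypothesis (H2) is exactly what is needed to avoid spurious torsion ($\Sigma_Y/\Sigma_X$) interfering when one reduces $\pi_1$, while (H1) ensures the relevant Euler factors and the term $(1-\theta(p))$, $(1-\chi_1\bar\chi_2(p))$, $(1-\theta\chi_1\bar\chi_2(p))$ are $\mathfrak p$-adic units so that $\mathcal E_f$ and $\ell$ make sense and the division is legitimate. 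Finally, for the \emph{global} refinement one invokes $\bar\theta$-regularity \eqref{Gras-hyp}: it forces $H^1_{/\fin}(\Q,\cO/\mathfrak p^t(\theta)(1))$, or rather the relevant Selmer-type obstruction group, to vanish, so that a class which vanishes locally at $p$ (or agrees with $\bar c_\theta$ locally at $p$) must do so globally — this is a standard Poitou--Tate / Gras-conjecture argument, and I would cite Gras's work for the precise form. I would expect the bookkeeping of Gauss sums, the factor $iN/12$ versus $iN$, and the $\zeta_p(-1)$ normalization to be the most error-prone routine part, but not conceptually hard once the reciprocity law is in place.
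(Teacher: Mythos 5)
Your high-level plan is the same as the paper's: specialize Kato's explicit reciprocity law and Perrin-Riou's Coleman map at the bottom layer, match the resulting $p$-adic $L$-values with the circular-unit side, and then use $\bar\theta$-regularity (Gras) to upgrade the local congruence to a global one. You also correctly identify where (H1), (H2) and the rigidification \eqref{rig-iso} enter. However, the proposal has a genuine gap at the central step: you assert that ``under the congruence $f\equiv E_2(\theta,1)$, the image of Kato's class in the unramified quotient degenerates mod $\mathfrak p^t$ to the corresponding circular-unit class,'' and call this ``forced.'' But that degeneration is precisely the content of the theorem, not a known input; a reciprocity law alone will not produce it. What is actually needed, and what the paper supplies, is a mod-$\mathfrak p^t$ factorization of the Mazur--Tate--Teitelbaum $p$-adic $L$-function $L_p(f^*,\psi,s)$ into a product of two Kubota--Leopoldt $p$-adic $L$-functions (Proposition \ref{fact-l}, going back to Mazur and refined by Fukaya--Kato's Proposition 8.2.4), applied at $\psi=\mathbf 1$ to rewrite $L_p(f^*,1+s)$ as $2\zeta_p(-s)L_p(\bar\theta,s)$. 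This is the bridge between the Kato side and the circular-unit side at the level of $\Lambda$-adic $L$-functions; without it there is no mechanism to pass from the congruence of forms to a congruence of Euler system classes.

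Two further steps are missing that are essential to closing the argument. First, once you have the identity $\mathcal L_f^-(\res_p\kappa_{f,\infty}^-)\equiv (\mathrm{const})\cdot \mathcal L_\theta(\res_p\kappa_{\theta,\infty})$ mod $\mathfrak p^t$ of Corollary \ref{qualsevol} and specialize at $s=1$, you still need the congruence of the two Perrin-Riou regulator \emph{maps} themselves, $\mathcal L_{f,1}^-\equiv\mathcal L_{\theta,1}\pmod{\mathfrak p^t}$ (Lemma \ref{pr-iguals}), which is where the Dieudonn\'e-module normalization \eqref{rig-iso} and the congruence $\alpha_f\beta_f=\theta(p)p$ of Euler factors are used; this is more than ``feeding in the rigidification.'' Second, you must show $\mathcal L_{\theta,1}$ is an \emph{isomorphism} onto $\mathcal O_{\mathfrak p}$ (Lemma \ref{l-theta-iso}, using the fact that $\theta|_{G_{\Q_p}}$ is nontrivial unramified and a careful valuation count involving the Euler factor $\frac{\bar\theta(p)p^{-1}-1}{1-\theta(p)}$ against the image of the local logarithm); only then does the congruence of $\mathcal L$-values propagate back to a congruence of local cohomology classes $\mathcal E_f\cdot\res_p\kappa_f^-\equiv\ell\cdot\res_p c_\theta$. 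Without this invertibility, the congruence of numbers in $\mathcal O/\mathfrak p^t$ does not determine the classes. Your treatment of the global upgrade via Gras is correct in spirit; the paper implements it through a Kummer-theoretic comparison of global units against local units (Lemma \ref{global-local}), which you should make explicit.
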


Theorem \ref{thm1} in the introduction readily follows from the above statement. Indeed, it follows from \eqref{interpolation} that \[ L_p(f^*,\bar \chi_1 \chi_2,1) = (1-\bar \theta \bar \chi_1 \chi_2(p)\beta_fp^{-1})(1-\chi_1 \bar \chi_2(p) \beta_fp^{-1}) \times \mathfrak g(\chi_1 \bar \chi_2) \times \frac{L(f^*,\bar \chi_1 \chi_2,1)}{\Omega_f^+}. \]
Since $\zeta_p(-1)=(1-p)\cdot \zeta(-1) =(p-1)\cdot \frac{B_2}{2} = \frac{p-1}{12}$,
the Euler factors in the above theorem cancel out in light of (H1) and the congruence $(\alpha_f, \beta_f) \equiv (\theta(p),p) \pmod{\mathfrak p^t}$. Then, Theorem \ref{thm1} follows.

The remainder of this section is devoted to the proof of Theorem \ref{unteo}.

%which is a combination of the following four ingredients:
%\begin{enumerate}
%\item[(a)] A factorization formula for the $p$-adic $L$-function $L_p(f^*,\chi,s)$.
%\item[(b)] Coleman's power series and the $p$-adic Kronecker limit  formula.
%\item[(c)] Kato's explicit reciprocity law.
%\item[(d)] Congruences among Ohta's periods.
%\end{enumerate}

\subsection{Mazur's factorization formula}\label{mazur}

The aim of this section is to recall a mod $\mathfrak{p}$ factorization formula for $L_p(f^*,\psi)$ in terms of two Kubota--Leopoldt $p$-adic $L$-functions. The first result in this direction was proved  by Mazur in \cite{Maz}, and further generalizations were provided by Stevens \cite{St-book} and Greenberg--Vatsal \cite[Theorem 3.12]{GV}. As explained before, this formula actually depends on the choice of periods $\Omega_f^\pm$ and we make this choice by invoking the work of Sharifi  \cite{Sh} and Fukaya-Kato \cite{FK}, which gives rise to a rather precise formula that turns out to be crucial for our purposes.

Note firstly that with the conventions adopted above, character $\xi \varepsilon_{\cyc}$ lies in $\cW^+$ if and only if $\xi$ is odd, and in that case the period appearing in \eqref{interpolation} is $\Omega_f^-$. The main result we aim to prove in this section is concerned with the value of $L_p(f^*,\psi,s)$ at $s=2$ (and $\xi=1$), which again lies in $\cW^+$. For this reason it will suffice to work with the restriction of $L_p(f^*,\psi)$ to $\cW^+$, and accordingly  we only need to choose period $\Omega_f^-$.

The work of Sharifi \cite{Sh} and Fukaya-Kato \cite{FK} allows to take a canonical choice of $\Omega_f^-$. Namely, recall from \eqref{isos} that $T_{f,X}^-$ is a free $\cO_{\mathfrak{p}}$-module of rank $1$. Take any generator $\delta^-$ of $T_{f,X}^-$ such that $\delta^-$ (mod $\mathfrak p^t$) is the basis of $\bar{T}_{f,X}^-$ specified in \cite[6.3.18, 7.1.11, 8.2.4]{FK}, which in turn builds on the work of Sharifi \cite[\S 4]{Sh} and Ohta \cite{Oh1}, \cite{Oh2}. Define $\Omega_f^-$ from $\delta^-$ as in \eqref{Omegas}.

It is readily verified that this choice of $\Omega_f^-$ satisfies the defining properties imposed by Vatsal in \cite{Va}. Beware however that in loc.\,cit.\,these periods are only well-defined up to $p$-adic units and  this is not enough for our aims in this paper.

With our canonical definition of $\Omega_f^-$ at hand, we can now define $\Omega_f^+$ as in \cite[Remark 2.7]{Va2}, namely the one satisfying
%\begin{propo}\label{comp-per}
%Let $(\Omega_f^+,\Omega_f^-)$ stand for the pair of complex periods defined in \cite{Va}. Then,
\begin{equation}\label{prod-peri}
\Omega_f^+ \Omega_f^- =  \frac{4\pi^2}{\varpi^r} \langle f,f \rangle.
\end{equation}
%\end{propo}

% The precise formulation adopted here is borrowed from \cite[Proposition 8.2.4]{FK}, where the result is presented in terms of classical $L$-values. The following version immediately follows by recasting to the interpolation formula in %\cite[eq. (26)]{Das} and using a density argument. We follow the conventions of loc.\,cit. and interpret the Kubota--Leopoldt $p$-adic $L$-functions

Given an even Dirichlet character $\chi$, let $L_p(\chi) \in \Lambda$ denote the associated Kubota-Leopoldt $p$-adic $L$-function, which again we may regard as a one-variable function by setting $L_p(\chi,s)=L_p(\chi)(\varepsilon_{\cyc}^s)$.

\begin{propo}\label{fact-l}
The following congruence relation holds in $\Lambda^+$:
\begin{equation}
L_p(f^*,\psi,s) \equiv 2 \cdot L_p(\bar \psi,1-s) \cdot L_p(\bar \theta \psi, s-1) \pmod{\mathfrak p^t}.
\end{equation}
\end{propo}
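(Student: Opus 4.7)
The plan is to establish the congruence by checking it on a Zariski-dense subset of classical points in $\cW^+$ and then concluding by continuity in $\Lambda^+$. The natural test points are the characters $\xi \varepsilon_{\cyc}$ with $\xi$ an odd Dirichlet character of $p$-power conductor $p^n$, $n \geq 1$, at which all three $p$-adic $L$-functions admit explicit interpolation in terms of classical complex $L$-values. By the density statement recalled before \eqref{interpolation}, these points are Zariski-dense in $\cW^+$, so it suffices to verify the congruence of values modulo $\mathfrak{p}^t$.

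At such a point, the interpolation formula \eqref{interpolation} gives
\[
L_p(f^*,\psi)(\xi\varepsilon_{\cyc}) \;=\; \alpha_f^{-n}\,\mathfrak g(\bar\psi\bar\xi)\,\frac{L(f^*,\psi\xi,1)}{\Omega_f^{-}}.
\]
First I would use the Eisenstein congruence $f^* \equiv E_2(1,\bar\theta) \pmod{\mathfrak{p}^t}$ to obtain the congruence of complex $L$-values
\[
L(f^*,\psi\xi,1) \;\equiv\; L(\psi\xi,1)\cdot L(\bar\theta\psi\xi,0) \pmod{\mathfrak{p}^t},
\]
which reflects the factorization $L(E_2(1,\bar\theta)\otimes\psi\xi,s) = L(\psi\xi,s)L(\bar\theta\psi\xi,s-1)$ for Eisenstein $L$-functions, combined with the fact that a $\pmod{\mathfrak{p}^t}$ congruence of $q$-expansions yields a $\pmod{\mathfrak{p}^t}$ congruence of the corresponding algebraic $L$-values, once the period is chosen compatibly.

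Next I would compare each factor to the corresponding Kubota--Leopoldt interpolation: $L_p(\bar\psi,1-s)$ evaluated at $s$ corresponding to $\xi\varepsilon_{\cyc}$ recovers $L(\psi\xi,1)$ (multiplied by a Gauss sum and an Euler factor at $p$), and similarly $L_p(\bar\theta\psi,s-1)$ recovers $L(\bar\theta\psi\xi,0)$. After multiplying these two interpolation formulas together, the Gauss sum $\mathfrak g(\bar\psi\bar\xi)$ and the power $\alpha_f^{-n}$ on the left are matched on the right, using the unit-root congruence $\alpha_f \equiv 1 \pmod{\mathfrak{p}^t}$ together with hypothesis (H1) to control the Euler factors at $p$.

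The main obstacle, and where the work of Sharifi, Ohta and Fukaya--Kato is indispensable, will be pinning down the factor of $2$ on the right-hand side as an honest equality $\pmod{\mathfrak{p}^t}$ rather than merely up to a $p$-adic unit. This requires the canonical choice of $\Omega_f^-$ fixed in \S\ref{mazur} via the basis of $\bar T_{f,X}^-$ singled out in \cite{FK,Sh,Oh1,Oh2}, and the resulting normalization of $\Omega_f^+$ through \eqref{prod-peri}. A secondary technical point will be to consistently normalize twists (accounting for the difference between $f$ and $f^*$ and between the Dirichlet characters $\psi$, $\bar\psi$, $\bar\theta\psi$), so that on matching interpolation formulas at each classical point, the Gauss sums cancel cleanly to produce precisely the claimed factor $2$.
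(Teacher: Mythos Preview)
The paper's own proof is a one-line citation: ``This follows from \cite[Proposition 8.2.4]{FK}.'' So there is essentially nothing to compare against except the black-box result in Fukaya--Kato.

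Your density-plus-interpolation strategy is the classical route of Mazur, Stevens, and Greenberg--Vatsal (indeed the paper references \cite{Maz}, \cite{St-book}, \cite[Theorem 3.12]{GV} just before stating the proposition), and it does yield a congruence of this shape \emph{up to a $p$-adic unit}. Two remarks on the execution. First, a factual slip: you write $\alpha_f \equiv 1 \pmod{\mathfrak p^t}$, but from $f \equiv E_2(\theta,1)$ one has $(\alpha_f,\beta_f) \equiv (\theta(p),p) \pmod{\mathfrak p^t}$ (this is used explicitly in the paragraph following Theorem~\ref{unteo}). The power $\alpha_f^{-n}$ therefore contributes $\theta(p)^{-n}$, and matching this against the right-hand side requires tracking Gauss sums for $\bar\theta\psi\xi$ carefully rather than discarding it. Second, the congruence $L(f^*,\psi\xi,1) \equiv L(\psi\xi,1)\cdot L(\bar\theta\psi\xi,0)$ is not well-posed as written: $L(\psi\xi,1)$ is transcendental, so both sides must be divided by compatible periods before one can speak of a congruence in $\mathcal O/\mathfrak p^t$.

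More importantly, you correctly identify that the entire content of the proposition lies in pinning down the constant as exactly $2$, and that this depends on the specific choice of $\Omega_f^-$ via the canonical basis of $\bar T_{f,X}^-$ from \cite{FK,Sh,Oh1,Oh2}. But you do not carry this out; you defer it to the Fukaya--Kato machinery, which is precisely what the paper's proof does. In other words, once you reach the ``main obstacle,'' your argument collapses back onto the same citation of \cite[Proposition 8.2.4]{FK}. The scaffolding you build around it (density, interpolation, Gauss-sum bookkeeping) is the standard reduction that \cite{GV} already provides up to a unit; the new input needed for the exact factor is not supplied by your sketch.
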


\begin{proof}
This follows from \cite[Proposition 8.2.4]{FK}.
\end{proof}

%\begin{proof} Weight space $W=\Hom((\Z/N\Z)^\times \times \Z_p^\times,\Z_p^\times, \Z_p^\times)$ decomposes naturally as the product $W=W^+\times W^-$ of the spaces of even (resp.\,odd) characters. The proposition follows if one shows %the above congruence for a dense set of points in $W^+$, and we shall do that for the set of all finite order, even Dirichlet characters of the form $\psi \xi$ with $\psi$ of conductor dividing $N$ and $\xi$ of conductor $p^u$ for some $u\geq 1$. The %sought-after congruence follows from  \cite[Proposition 8.2.4]{FK} (cf. also \cite[Theorem 3.12]{GV}), which asserts that \[ L_p(f^*,\psi\xi,r) \equiv 2 \cdot L_p(\bar \psi \bar \xi,1-r) \cdot L_p(\bar \theta \psi \xi,r-1) \pmod{p^t} \]
%\end{proof}

In particular, since $\psi$ is even, it holds that
\begin{equation}
L_p(f^*,\psi,2) = 2 \cdot L_p(\bar \psi,-1) \cdot L_p(\bar \theta \psi,1) \equiv -B_2(\bar \psi) \cdot L_p(\bar \theta \psi,1) \pmod{\mathfrak p^t}.
\end{equation}

Let $C_{f^*} \subseteq O_{\mathfrak p}$ denote the congruence ideal attached to $f^*$ as defined e.g.\,in \cite{Oh3}. In Hida's terminology, the appropriate power of the uniformizer $\varpi$ generating $C_{f^*}$, say $\varpi^r$, is sometimes called a congruence divisor.
%Note that $\varpi$ is uniquely defined only up to multiplication by a $\mathfrak p$-adic unit.

%Observe that we have presented a canonical choice of period $\Omega_f^-$ which according to Lemma \ref{vat-fk} fits with the setting of Proposition \ref{comp-per}. Similarly, the choice of congruence divisor is well-defined only up to %a $p$-adic unit. According to the previous proposition, the choices of $\varpi^r$ and $\Omega_f^-$ prescribe the value of $\Omega_f^+$, and we keep it that way for the remaining of the article.

%Let us describe the extent to which this ambiguity in our choice of congruence divisor affects the statements below. If $\varpi^r$ is replaced with $u \varpi^r$ for some $u \in \mathcal O_{\mathfrak p}^{\times}$, the period $\Omega_f^+%$ is thus replaced with $u^{-1} \cdot \Omega_f^+$ and the values of the Mazur--Kitagawa $p$-adic $L$-function $L_p(f^*,\psi,s)$ at characters $\psi$ satisfying $(-1)^{s-1} \psi(-1) = 1$ are affected accordingly.

\subsection{Dieudonn\'e modules and congruences among Ohta's periods}\label{Ohta}

Given a $p$-adic de Rham representation $V$ of $G_{\Q_p}$ with coefficients in $F_{\mathfrak p}$, let $D_{\dR}(V) = (V\otimes_{\Q_p} B_{\dR})^{G_{\Q_p}}$ denote its de Rham Dieudonn\'e module and let $\log_{\BK}$ (resp. $\exp_{\BK}^*$) stand for the Bloch--Kato logarithm (resp. dual exponential map) attached to $V$ as defined in \cite{BK}, \cite{Bel}.

In this note we shall work with the Dieudonn\'e modules associated to the following two basic types of representations. Firstly, if $\chi: G_{\Q_p} \lra \cO^\times_{\mathfrak p}$ is a finite order character and $s$ is an integer, let $F_{\mathfrak p}(\chi)(s)$ denote the $1$-dimensional representation on which $G_{\Q_p}$ acts via $\chi \varepsilon_{\cyc}^s$. Then $D_{\dR}(F_{\mathfrak p}(\chi)(s)$ is again $1$-dimensional and  a canonical generator of $D_{\dR}(F_{\mathfrak p}(\bar \chi)(-s))$ is given by $t^s \mathfrak g(\chi)^{-1}$, where $t$ is Fontaine's $p$-adic analogue of $2 \pi i$. Moreover, there is a perfect pairing
$$\langle \,, \rangle \, : \, D_{\dR}(F_{\mathfrak p}(\chi)(s))\times D_{\dR}(F_{\mathfrak p}(\bar \chi)(-s)) \lra F_{\mathfrak p}$$
which gives rise to the isomorphism
\begin{equation}\label{dtz}
D_{\dR}(F_{\mathfrak p}(\chi)(s)) \rightarrow F_{\mathfrak p} \quad c \mapsto \langle c, \frac{t^s}{\mathfrak g(\chi)} \rangle.
\end{equation}

If $s\geq 1$ and $\chi\ne 1$  when $s=1$, Bloch-Kato's  logarithm  gives rise to an isomorphism
\begin{equation}\label{htd}
\log_{\BK}: H^1(\mathbb Q_p, F_{\mathfrak p}(\chi)(s)) \rightarrow D_{\dR}(F_{\mathfrak p}(\chi)(s)).
\end{equation}

Secondly, the de Rham Dieudonn\'e module $D_{\dR}(V_f)$ associated to the eigenform $f$ is a $F_{\mathfrak p}$-filtered vector space of dimension $2$.
As discussed in e.g.\,\cite[\S 2.8]{KLZ}, Poincar\'e duality yields a perfect pairing \[ \langle \, , \, \rangle: D_{\dR}(V_f(-1)) \times D_{\dR}(V_{f^*}) \rightarrow F_{\mathfrak p} \] and \eqref{filtracio} induces an exact sequence of Dieudonn\'e modules
\begin{equation}\label{Ds}
0 \rightarrow D_{\dR}(V_f^{\sub}) \rightarrow D_{\dR}(V_f) \rightarrow D_{\dR}(V_f^{\quo}) \rightarrow 0,
\end{equation}
where $D_{\dR}(V_f^{\sub})$ and $D_{\dR}(V_f^{\quo})$ have both dimension $1$.

Falting's theorem associates to $f$ a regular differential form $\omega_f \in \Fil(D_{\dR}(V_f))$, which gives rise to an element in $D_{\dR}(V_f^{\quo})$ via the right-most map in \eqref{Ds} and in turn induces a linear form
\begin{equation}\label{omegaf}
\omega_f \, : \, D_{\dR}(V_{f^*}^{\sub}(-1)) \rightarrow F_{\mathfrak p}, \quad \eta \mapsto \langle \omega_f, \eta \rangle
\end{equation}
that we continue to denote with same symbol by a slight abuse of notation.

There is also a differential $\eta_f$, which is characterized by the property that it spans the line $D_{\dR}(V_f^{\sub}(-1))$ and is normalized so that
\begin{equation}\label{def-etaf}
\langle \eta_f, \omega_{f^*} \rangle = 1.
\end{equation}
Again, it induces a linear form
\begin{equation}\label{etaf0}
\eta_f \, : \, D_{\dR}(V_{f^*}^{\quo}) \rightarrow F_{\mathfrak p}, \quad \omega \mapsto \langle \eta_f, \omega \rangle
\end{equation}

\vspace{0.2cm}

We turn now to the more delicate $p$-adic Hodge theory of {\em integral} Galois representations. Let $T$ be an unramified $\cO_{\mathfrak p}[\Gal(\bar\Q_p/\Q_p)]$-module and set $V=T\otimes F_{\mathfrak p}$. Let $\hat\Z_p^\mathrm{ur}$ denote the completion of the ring of integers of the maximal unramified extension of $\Q_p$, and define as in e.g.\,\cite[Theorem 2.1.11]{Oh2} the integral Dieudonn\'e module
$$
D(T) := (T \,  \hat\otimes_{\Z_p}  \, \hat\Z_p^\mathrm{ur})^{\Fr_p=1}.
$$
As shown in loc.\,cit.\,we have $D_{\dR}(V) = D(T)\otimes F_{\mathfrak p}$.

As explained e.g.\,in \cite[Prop.\,1.7.6]{FK}, there is a functorial isomorphism of $\cO_{\mathfrak p}$-modules (forgetting the Galois structure) given by
\begin{equation}\label{D(T)=T}
T  \, \stackrel{\sim}{\lra } D(T).
\end{equation}
This map is not canonical as it depends on a choice of root of unity; for $T=\cO_{\mathfrak p}(\chi)$ we take it to be given by the rule $1 \mapsto \, \mathfrak g(\chi)$.

Recall from \eqref{Tf-free} the free $\cO_{\mathfrak{p}}$-quotient
$$
T_{f,\circ}^{\quo}  \simeq \cO_{\mathfrak p}(\psi_f);
$$
note in particular that $T_{f,\circ}^{\quo}$ is unramified.

According to \cite[\S 10.1.2]{KLZ}, the image of $\eta_{f^*|D(T_{f}^{\quo})}$ is precisely the inverse $C_{f^*}^{-1}$ of the congruence ideal introduced above, and hence there is an isomorphism
\[ \eta_{f^*} \, : \, D(T_{f,\circ}^{\quo}) \longrightarrow C_{f^*}^{-1}, \quad \omega \mapsto \langle \eta_{f^*},\omega \rangle. \]
%where $\varpi^r$ stands for a generator of $C_{f}$. In particular, this means that $\omega_{f^*}/\varpi^r$ is a generator of $D(T_{f^*}^{\quo})$, which is a free rank one module over $%\mathcal O_{\mathfrak p}$.

Writing $C_f=(\varpi^r)$ for the appropriate power  $r\geq 1$ and setting $\tilde \eta_{f^*} := \varpi^r \cdot \eta_{f^*}$, the above map gives rise to an isomorphism of $O_{\mathfrak p}$-modules
\begin{equation}\label{etaf}
\tilde \eta_{f^*} \, : \, D(T_{f,\circ}^{\quo}) \longrightarrow \mathcal O_{\mathfrak p}, \quad \omega \mapsto \langle \tilde \eta_{f^*},\omega \rangle.
\end{equation}

\vspace{0.3cm}

%Recall our choice of model for the modular curve in Section \ref{sec:back}. Following the discussion of \cite[Section 1.7.16]{FK} (see also \cite{Oh2}) and

The map $\pi_1$ appearing in Proposition \ref{ses-y} \[ \pi_1 \, : \, T_f \longrightarrow \mathcal O/\mathfrak p^t(\theta) \] is only well-defined up to units in $\mathcal O/\mathfrak p^t$. We rigidify it by invoking diagram \eqref{pi1-factor}, which tells us that $\pi_1$ is fixed once we take a choice of an isomorphism of $G_{\Q_p}$-modules
\begin{equation}\label{bar-iota}
\bar\iota: \bar{T}_{f,\circ}^{\quo} \stackrel{\sim}{\lra} \cO/\mathfrak{p}^t(\theta).
\end{equation}
Fixing such a map amounts to choosing the class $(\mathrm{mod}\,{\mathfrak p^t})$ of an isomorphism of local modules $\iota: T_{f,\circ}^{\quo}  \simeq \mathcal O_{\mathfrak p}(\psi_f)$.
%Note that our running hypothesis (H1) implies that $\theta_{|G_{\Q_p}}$ is non-trivial. Hence \eqref{Vfcong} induces
In light of the functoriality provided by \eqref{D(T)=T} this determines and is determined by the class $(\mathrm{mod}\,{\mathfrak p^t})$ of an isomorphism $D(\iota): D(T_{f,\circ}^{\quo}) \simeq D(\cO_{\mathfrak p}(\psi_f))$.

%Therefore, although $\pi_1$ is non-canonical a priori, it can be rigidified by asking \eqref{Vfcong} to be the single map $\pi_1: T_f^{\quo} \otimes \mathcal O/\mathfrak p^t \simeq \mathcal %O/\mathfrak p^t(\theta)$ making the following diagram commutative:
%\begin{equation}\label{rig-iso}
%\xymatrix
%{ D(T_f^{\quo})  \otimes \mathcal O/\mathfrak p^t \quad  \ar[r]^{\quad \langle \, , \tilde \eta_{f^*}\rangle}\ar[d]^{D(\pi_1)} & \quad  \mathcal O/\mathfrak p^t  \\
% D(\cO_{\mathfrak p}(\theta))  \otimes \mathcal O/\mathfrak p^t  \ar[ur]_{\cdot 1/ \mathfrak g(\theta)}&
% }
%\end{equation}
%Indeed, since both $ \langle \, , \tilde \eta_{f^*}\rangle$ and $\cdot 1/ \mathfrak g(\theta)$ are isomorphisms, it follows that such a map $D(\pi_1)$ exists and is unique, and this in turn pins %down $\eta$ in light of \eqref{D(T)=T}.

We choose $\iota$ as the single isomorphism  making the following diagram commutative:
\begin{equation}\label{rig-iso}
\xymatrix
{ D(T_{f,\circ}^{\quo})  \quad  \ar[r]^{\quad \langle \, , \tilde \eta_{f^*}\rangle}\ar[d]^{D(\iota)} & \quad  \mathcal O_{\mathfrak p}  \\
 D(\cO_{\mathfrak p}(\psi_f))  \ar[ur]_{\cdot 1/ \mathfrak g(\theta)}&
 }
\end{equation}
Indeed, since both $ \langle \, , \tilde \eta_{f^*}\rangle$ and $\cdot 1/ \mathfrak g(\theta)$ are isomorphisms, it follows that such a map $D(\iota)$ exists and is unique, and this in turn pins down $\iota$ and $\bar\iota$ in light of \eqref{D(T)=T}.

\subsection{Coleman's power series and the Kubota-Leopoldt $p$-adic $L$-function}

Let $$\underline{\varepsilon}_{\cyc}: G_{\mathbb Q} \rightarrow \Lambda^{\times}$$ denote the $\Lambda$-adic cyclotomic character which sends a Galois element $\sigma$ to the group-like element $[\varepsilon_{\cyc}(\sigma)]$. It interpolates the powers of the $\mathbb Z_p$-cyclotomic character, in the sense that for any arithmetic point $\nu_{s,\xi}\in \cW^{\mathrm{cl}}$, \begin{equation}\label{lambda-eps-cyc} \nu_{s,\xi} \circ \underline{\varepsilon}_{\cyc} = \xi \cdot \varepsilon_{\cyc}^{s-1}. \end{equation}

The following result follows from the general theory of Perrin-Riou maps (see for instance \cite[\S 8]{KLZ}).

\begin{propo}\label{perrin-circ}
There exists a morphism of $\Lambda$-modules
\[
\mathcal L_{\chi}: H^1(\mathbb Q_p, \cO_{\mathfrak p}(\chi) \otimes \Lambda(\varepsilon_{\cyc} \underline{\varepsilon}_{\cyc})) \rightarrow \Lambda
\]
satisfying that for all integers $r$, the specialization of $\mathcal L_{\chi}$ at $s \in \mathcal W^{\circ}$ is the homomorphism
\[
\mathcal L_{\chi,s}: H^1(\mathbb Q_p,\mathcal O_{\mathfrak p}(\chi)(s)) \rightarrow \mathcal O_{\mathfrak p}
\]
given by
\[
\mathcal L_{\chi,s} = \frac{1-\bar \chi(p)p^{-s}}{1-\chi(p)p^{s-1}} \cdot
\begin{cases}
\frac{(-1)^s}{(s-1)!} \cdot \langle \log_{\BK},  \frac{t^s}{\mathfrak g(\chi)} \rangle  & \text{ if } s \geq 1 \\
(-s)! \cdot \langle \exp_{\BK}^*, \frac{t^s}{\mathfrak g(\chi)} \rangle  & \text{ if } s<1,
\end{cases}
\]
\end{propo}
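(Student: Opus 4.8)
The plan is to obtain $\mathcal L_\chi$ as a special case of Perrin--Riou's big logarithm, applied to the rank-one representation $V=F_{\mathfrak p}(\chi)(1)$ of $G_{\Q_p}$. Note that in every situation relevant to this article the character $\chi$ has conductor prime to $p$, so $\chi_{|G_{\Q_p}}$ is unramified, $V$ is crystalline, and $D_{\cris}(V)$ is one-dimensional (the general de Rham case, if needed, is covered by the same references). First I would use Shapiro's lemma to identify $H^1(\Q_p,\cO_{\mathfrak p}(\chi)\otimes\Lambda(\varepsilon_{\cyc}\underline{\varepsilon}_{\cyc}))$ with the Iwasawa cohomology $H^1_{\Iw}$ of $\cO_{\mathfrak p}(\chi)(1)$ along the cyclotomic $\Z_p$-extension of $\Q_p$; this both explains the $\Lambda$-module structure of the source and shows that the twist by $\varepsilon_{\cyc}$ is what normalises the ``base weight'' to be $1$.

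Next I would invoke the Perrin--Riou regulator map attached to $V$, in the form in which it is constructed (with the generality we need) in \cite[\S 8]{KLZ}; it ultimately rests on Coleman power series and Perrin--Riou's explicit reciprocity law, and it is a $\Lambda$-linear map $H^1_{\Iw}\to\Lambda\otimes D_{\cris}(V)$. Composing it with the linear functional $c\mapsto\langle c,\, t^s/\mathfrak g(\chi)\rangle$ of \eqref{dtz} --- where the powers of Fontaine's $t$ are produced uniformly in $s$ by the $\underline{\varepsilon}_{\cyc}$-twist --- yields a $\Lambda$-linear map $H^1(\Q_p,\cO_{\mathfrak p}(\chi)\otimes\Lambda(\varepsilon_{\cyc}\underline{\varepsilon}_{\cyc}))\to\Lambda$, which will be $\mathcal L_\chi$. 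The interpolation property at $s\in\mathcal W^{\circ}$ is then nothing but the defining interpolation of the big logarithm: for $s\ge 1$ one gets the factorial factor $\tfrac{(-1)^s}{(s-1)!}$ followed by $\log_{\BK}$ (legitimate because $\chi\neq 1$ at $s=1$, which is exactly the case in which \eqref{htd} is available), for $s<1$ one gets $(-s)!$ followed by $\exp^*_{\BK}$, and in both cases there is the Euler factor $\tfrac{1-\varphi}{1-p^{-1}\varphi^{-1}}$ acting on $D_{\cris}(F_{\mathfrak p}(\chi)(s))$; since $\varphi$ acts there by the scalar $\bar\chi(p)p^{-s}$ (with the Frobenius convention fixed in \S\ref{Ohta}), this factor equals $\tfrac{1-\bar\chi(p)p^{-s}}{1-\chi(p)p^{s-1}}$, reproducing the displayed formula.

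The genuinely delicate part is not existence --- which is routine once the right construction is cited --- but pinning down every normalisation so that the formula holds on the nose: the sign $(-1)^s$, the factorials $(s-1)!$ and $(-s)!$, the exact position of the Gauss sum $\mathfrak g(\chi)$ in the de Rham basis (so that \eqref{dtz} is an equality of maps, not merely an isomorphism defined up to a unit), the precise eigenvalue of $\varphi$ on $D_{\cris}(F_{\mathfrak p}(\chi)(s))$, and which of $p^{-s}$, $p^{s-1}$ sits in the numerator versus the denominator of the Euler factor. I would fix all of these at once by two test computations that leave no freedom: evaluating $\mathcal L_\chi$ on the Kummer images of the cyclotomic units $1-\zeta_{p^n}$, where both sides reduce to the classical Coates--Wiles / Iwasawa formulae and recover the Kubota--Leopoldt $p$-adic $L$-function of $\bar\chi$, and comparing characteristic-zero specialisations at one value $s\ge 2$ and one value $s<1$. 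A final routine check is that $\mathcal L_\chi$ takes values in $\Lambda$ rather than in a distribution algebra or in the total ring of fractions of $\Lambda$; this follows because $V$ has a single Hodge--Tate weight and, under our running hypotheses, no trivial zero occurs at $s=1$.
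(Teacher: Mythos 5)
Your proposal is correct and follows the same route as the paper, which dispatches the proposition with a one-line appeal to the general theory of Perrin--Riou regulators, citing \cite[\S 8]{KLZ}. Your unpacking of that citation --- Shapiro's lemma to pass to Iwasawa cohomology of the rank-one crystalline representation $F_{\mathfrak p}(\chi)(1)$, the $\Lambda$-linear big logarithm landing in $\Lambda\otimes D_{\cris}$, composition with the Gauss-sum functional of \eqref{dtz}, and the careful book-keeping of the factorials, sign, and Euler factors $\tfrac{1-\varphi}{1-p^{-1}\varphi^{-1}}$ with $\varphi$ acting by $\bar\chi(p)p^{-s}$ --- is exactly the content behind that reference, so there is nothing further to flag.
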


As a piece of notation, and for any $p$-adic representation $V$, we write $H_{\fin}^1(\mathbb Q,V)$ for the {\it finite} Bloch--Kato Selmer group, which is the subspace of $H^1(\mathbb Q,V)$ which consists on those classes which are crystalline at $p$ and unramified at $\ell \neq p$.

The following result is a reformulation of Coleman and Perrin-Riou's reciprocity law (\cite{Col}, \cite{PR}), with the normalizations used for instance in \cite{Tale}.

\begin{propo}\label{Leop}
There exists a $\Lambda$-adic cohomology class \[ \kappa_{\chi,\infty} \in H^1(\mathbb Q, \mathcal O_{\mathfrak p}(\chi) \otimes \Lambda(\varepsilon_{\cyc} \underline{\varepsilon}_{\cyc})) \] such that:
\begin{enumerate}
\item[(a)] Its image under restriction at $p$ followed by the Perrin--Riou regulator gives the Kubota--Leopoldt $p$-adic $L$-function: \[ \mathcal L_{\chi}(\res_p(\kappa_{\chi,\infty})) = L_p(\bar \chi). \]
\item[(b)] The bottom layer $\kappa_{\chi}(1) := \nu_1(\kappa_{\chi,\infty})$ lies in $H_{\fin}^1(\mathbb Q, \mathcal O_{\mathfrak p}(\chi)(1))$ and satisfies
\[ \kappa_{\chi}(1) = (1-\chi(p)) \cdot c_{\chi}. \]

\end{enumerate}
\end{propo}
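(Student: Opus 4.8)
The plan is to recognise Proposition \ref{Leop} as a repackaging of the Euler system of circular (Soul\'e--Beilinson) units together with the \emph{explicit reciprocity laws} of Coleman and Perrin--Riou; with the normalizations adopted here the statement is essentially contained in \cite{Col}, \cite{PR}, \cite{Kato} and \cite{Tale}, so I would present the argument as an assembly of known ingredients rather than a construction from scratch.

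First I would build $\kappa_{\chi,\infty}$. For each $n$, start from the norm-compatible system of cyclotomic units $c_n\in\Z[\mu_{Np^n}]^\times$ refining $c_\chi$, chosen so that corestriction from level $Np$ down to level $N$ recovers $c_\chi$ up to the Euler factor at $p$; apply the Kummer maps $\Z[\mu_{Np^n}]^\times\to H^1(\Q(\mu_{Np^n}),\Z_p(1))$, pass to the inverse limit over $n$, and project onto the $\chi$-isotypic component for the prime-to-$p$ part of $\Gal(\Q(\mu_{Np^\infty})/\Q)$. Shapiro's lemma identifies the resulting Iwasawa cohomology module with $H^1(\Q,\cO_{\mathfrak p}(\chi)\otimes\Lambda(\varepsilon_{\cyc}\underline{\varepsilon}_{\cyc}))$, which produces $\kappa_{\chi,\infty}$. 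For part (b), $\nu_1(\kappa_{\chi,\infty})$ is by construction the Kummer image of the level-$N$ circular unit modified by the Euler factor from corestriction $\Q(\mu_{Np})\to\Q(\mu_N)$, namely $1-\sigma_p^{-1}$, which on the $\bar\chi$-eigenspace of $\Z[\mu_N]^\times\otimes\cO_{\mathfrak p}$ acts by $1-\chi(p)$; this gives the identity $\kappa_\chi(1)=(1-\chi(p))\,c_\chi$. That $\kappa_\chi(1)$ lies in $H^1_{\fin}$ follows because $p\nmid N$ forces $c_\chi$ to be a genuine unit above $p$, so its Kummer class is crystalline, while the Euler system norm relations (or directly the fact that $c_\chi$ is a global unit) give the unramifiedness away from $p$.

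For part (a) I would invoke the reciprocity law. Coleman's theorem attaches to the norm-compatible system of cyclotomic units a power series which, up to an explicit unit, is $1-X$; feeding this into the Perrin--Riou regulator $\mathcal L_\chi$ --- the map characterised in Proposition \ref{perrin-circ} by interpolation of Bloch--Kato logarithms and dual exponentials --- produces exactly the rigid-analytic function on $\cW$ interpolating the values of the Dirichlet $L$-function $L(\bar\chi\xi,s)$, i.e.\ $L_p(\bar\chi)$. Concretely, I would verify $\mathcal L_\chi(\res_p(\kappa_{\chi,\infty}))=L_p(\bar\chi)$ on the dense set of specializations with $s<1$ (and finite-order twists $\xi$), using the classical evaluation of $\exp_{\BK}^*$ of cyclotomic-unit classes in terms of $L$-values at negative integers together with the interpolation formula defining $L_p(\bar\chi)$; since both sides lie in $\Lambda$, agreement on a dense set forces equality.

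The genuinely deep input is the reciprocity law underlying part (a), which in this note I would simply quote from \cite{Col}, \cite{PR}, \cite{Tale}. Beyond that, the only real obstacle is bookkeeping, and this is where care is needed: matching the normalization of $\mathcal L_\chi$ --- its factors $(-1)^s/(s-1)!$, $(-s)!$ and $t^s/\mathfrak g(\chi)$ --- against the classical statements, and checking that the Euler factor at the bottom layer comes out precisely as $1-\chi(p)$ and not a variant such as $1-\chi(p)^{-1}p^{-1}$; both depend sensitively on the direction of the norm maps and on the precise Tate twist appearing in the coefficient module $\Lambda(\varepsilon_{\cyc}\underline{\varepsilon}_{\cyc})$.
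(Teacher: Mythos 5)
Your proposal matches the paper's approach: the paper gives no independent argument, simply recording that the statement is a reformulation of Coleman and Perrin--Riou's reciprocity law (\cite{Col}, \cite{PR}) with the normalizations of \cite{Tale}. Your unwinding --- norm-compatible circular units pushed into Iwasawa cohomology via Kummer maps and Shapiro's lemma, the $1-\chi(p)$ Euler factor from corestriction at the bottom layer, and verification of (a) against $L_p(\bar\chi)$ on a dense set of specializations --- is exactly the content the paper leaves to the reader and the cited sources.
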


\subsection{Kato's explicit reciprocity law}

\begin{propo}\label{perrin}
There exists a homomorphism of $\Lambda$-modules
\[
\mathcal L_{f}^-: H^1(\mathbb Q_p, T_{f,\circ}^{\quo} \otimes \Lambda(\varepsilon_{\cyc} \underline{\varepsilon}_{\cyc})) \rightarrow \Lambda
\]
satisfying the following interpolation property: for $s \in \mathcal W^{\circ}$, the specialization of $\mathcal L_{f}^-$ at $s$ is the homomorphism
\[
\mathcal L_{f,s}^-: H^1(\mathbb Q_p, T_{f,\circ}^{\quo}(s)) \longrightarrow \mathcal O_{\mathfrak p}
\]
given by
\[
\mathcal L_{f,s}^- = \frac{1- \bar \theta(p) \beta_{f} p^{-s-1}}{1-\theta(p)\beta_f^{-1} p^s} \times \begin{cases} \frac{(-1)^s}{(s-1)!} \times \langle \log_{\BK}, t^s \tilde \eta_{f^*} \rangle & \text{ if } s \geq 1 \\ (-s)! \times \langle \exp_{\BK}^*, t^s \tilde \eta_{f^*} \rangle & \text{ if } s < 1, \end{cases}
\]
where $\log_{\BK}$ is the Bloch-Kato logarithm and $\exp_{\BK}^*$, the dual exponential map.
\end{propo}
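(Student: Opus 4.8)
The plan is to construct $\mathcal{L}_f^-$ as a specialization of the two-variable Perrin-Riou regulator attached to the ordinary quotient $T_{f,\circ}^{\quo}$, exactly as in the general formalism of \cite[\S 8]{KLZ}, and then to identify its interpolation formula explicitly by unwinding the normalizations fixed in \S\ref{Ohta}. Concretely, since $T_{f,\circ}^{\quo} \simeq \cO_{\mathfrak p}(\psi_f)$ is unramified of rank one, the Coleman map / Perrin-Riou big logarithm furnishes a $\Lambda$-module homomorphism $H^1(\mathbb{Q}_p, T_{f,\circ}^{\quo} \otimes \Lambda(\varepsilon_{\cyc}\underline{\varepsilon}_{\cyc})) \to \Lambda$ whose specialization at $s \in \mathcal{W}^\circ$ recovers, up to the standard Euler-type fudge factor, the Bloch--Kato logarithm (for $s \geq 1$) or dual exponential (for $s < 1$) paired against a chosen generator of the relevant Dieudonn\'e module. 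The content is to check that the Dieudonn\'e generator that naturally appears is precisely $t^s \tilde\eta_{f^*}$ and that the fudge factor is the one displayed.

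First I would recall the abstract construction: for an unramified $\cO_{\mathfrak p}[G_{\mathbb{Q}_p}]$-module $W$ of rank one, Perrin-Riou's machine (in the crystalline, ordinary setting) produces $\mathcal{L}_W \colon H^1(\mathbb{Q}_p, W \otimes \Lambda(\varepsilon_{\cyc}\underline{\varepsilon}_{\cyc})) \to D_{\cris}(W) \otimes \Lambda$, and composing with a chosen linear functional on $D_{\cris}(W)$ lands in $\Lambda$. Here $W = T_{f,\circ}^{\quo}$; by \eqref{Tf-free} and \eqref{D(T)=T} we have $D(T_{f,\circ}^{\quo}) \simeq D(\cO_{\mathfrak p}(\psi_f))$, and the functional we use is $\langle\,\cdot\,, \tilde\eta_{f^*}\rangle$ from \eqref{etaf}, where $\tilde\eta_{f^*} = \varpi^r \eta_{f^*}$ and the map $D(T_{f,\circ}^{\quo}) \to \cO_{\mathfrak p}$ is an honest isomorphism of $\cO_{\mathfrak p}$-modules by \cite[\S 10.1.2]{KLZ}. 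Second, I would compute the interpolation at a classical point $s$: by the defining property of Perrin-Riou maps the specialization $\mathcal{L}_{f,s}^-$ agrees with $\frac{(-1)^s}{(s-1)!}\langle \log_{\BK}, \cdot \rangle$ (resp. $(-s)!\langle\exp_{\BK}^*,\cdot\rangle$) up to the factor $\frac{1-\varphi}{1-p^{-1}\varphi^{-1}}$ evaluated via the crystalline Frobenius eigenvalue. The Frobenius on $D_{\cris}(T_{f,\circ}^{\quo}(s))$ acts by $\psi_f(\Frob_p)^{-1} p^{-s} = \alpha_f^{-1} p^{-s}$; since $f$ is ordinary with $\alpha_f\beta_f = \theta(p) p$ (up to the sign convention fixed in \S\ref{first-section}), one has $\alpha_f^{-1} = \beta_f (\theta(p)p)^{-1}$, and substituting yields precisely the factor $\frac{1-\bar\theta(p)\beta_f p^{-s-1}}{1-\theta(p)\beta_f^{-1}p^s}$ displayed in the statement. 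This is the step requiring care with conventions, because the twist by $t^s$ inside $t^s\tilde\eta_{f^*}$ and the Tate twist $(s)$ on the representation must be tracked consistently, and the relation between $\psi_f$ and the pair $(\alpha_f,\beta_f)$ relies on the congruence $\psi_f \equiv \theta \pmod{\mathfrak p^t}$ only at the level of characters, not the eigenvalues themselves.

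The main obstacle I anticipate is not the existence of $\mathcal{L}_f^-$ — that is a black-box application of \cite[\S 8]{KLZ} once one observes $T_{f,\circ}^{\quo}$ is unramified — but rather pinning down the constant in the interpolation formula so that $\tilde\eta_{f^*}$ (with the precise power $\varpi^r$ coming from the congruence divisor, cf.\ \cite[\S 10.1.2]{KLZ}) appears rather than $\eta_{f^*}$ or some unit multiple of it. This is exactly the point where the canonical normalizations of \S\ref{Ohta} — the commuting diagram \eqref{rig-iso} and the identification of $\eta_{f^*|D(T_f^{\quo})}$ with $C_{f^*}^{-1}$ — become essential, since they remove the ambiguity up to $p$-adic units that one would otherwise have. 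I would therefore phrase the proof as: (i) invoke Perrin-Riou's construction for the unramified rank-one module $T_{f,\circ}^{\quo}$; (ii) post-compose with the isomorphism $\langle\,\cdot\,,\tilde\eta_{f^*}\rangle$ of \eqref{etaf}; (iii) read off the interpolation from the general formula, computing the Frobenius eigenvalue as $\alpha_f^{-1}p^{-s}$ and rewriting the resulting Euler factor using $\alpha_f\beta_f = \theta(p)p$. The remaining verifications — compatibility under varying $s$, the factorials, the sign $(-1)^s$ — are routine consequences of the normalization conventions in \cite{Tale} already adopted in Proposition \ref{perrin-circ}.
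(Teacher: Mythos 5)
Your proposal is correct and takes essentially the same route as the paper: both invoke the Perrin-Riou/Loeffler--Zerbes big logarithm (as recast in \cite[\S 8]{KLZ}) for the unramified rank-one module $T_{f,\circ}^{\quo}$ and then post-compose with the pairing $\langle\,\cdot\,,\tilde\eta_{f^*}\rangle$ of \eqref{etaf}, with the interpolation factor read off from the general formula. You are in fact somewhat more explicit than the paper about the Frobenius-eigenvalue computation $\varphi = \alpha_f^{-1}p^{-s} = \bar\theta(p)\beta_f p^{-s-1}$ which produces the displayed Euler factor, and that computation is correct.
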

\begin{proof}
This follows from Coleman and Perrin-Riou's theory of $\Lambda$-adic logarithm maps as extended by Loeffler and Zerbes in \cite{LZ}. This is recalled for instance in \cite[\S 8,9]{KLZ}. More precisely, \cite[Theorem 8.2.3]{KLZ} and, more particularly, the second displayed equation in  \cite[p.\,82]{KLZ} yields an injective map
$$
H^1(\mathbb Q_p, T_{f,\circ}^{\quo} \otimes \Lambda(\varepsilon_{\cyc} \underline{\varepsilon}_{\cyc})) \lra D(T_f^{\quo}) \otimes \Lambda,
$$
since $H^0(\mathbb Q_p,T_{f,\circ}^{\quo}(1)) = 0$ because of the assumption that $\alpha_f \equiv \theta(p) \neq 1$ modulo $\mathfrak p$.

This map is characterized by the interpolation property formulated in \cite[Appendix B]{LZ}.
Next we apply the pairing of \eqref{etaf} and the result follows.
\end{proof}

\begin{theorem}\label{kato-law}
There exists a $\Lambda$-adic cohomology class \[ \kappa_{f,\infty} \in H^1(\mathbb Q, T_f \otimes \Lambda^-(\varepsilon_{\cyc} \underline{\varepsilon}_{\cyc})) \] such that:
\begin{enumerate}

\item[(a)] There is an explicit reciprocity law
\[ \mathcal L_{f}^-(\res_p(\kappa_{f,\infty})^-) = \frac{N \mathfrak g(\theta\chi_1 \bar \chi_2) \cdot L_p(f^*, \bar \chi_1 \chi_2,1)}{2i \mathfrak g(\chi_1 \bar \chi_2)} \times L_p(f^*,1+s), \]
where $\res_p$ stands for the map corresponding to localization at $p$ and $\res_p(\kappa_{f,\infty})^-$ is the map induced in cohomology from the projection map $T_f \rightarrow T_{f,\circ}^{\quo}$ of \eqref{filtracio}.

\item[(b)]  The bottom layer $\kappa_{f}(1)$ lies in $H_{\fin}^1(\mathbb Q, T_{f}(1))$ and satisfies \[ \kappa_{f}(1) = \mathcal E_f \cdot \kappa_{f}, \] where $\mathcal E_f$ is the Euler factor introduced in \eqref{Eulerf}.
\end{enumerate}
\end{theorem}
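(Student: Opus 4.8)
The plan is to obtain $\kappa_{f,\infty}$ from Kato's Euler system of Beilinson--Kato elements along the cyclotomic tower, and then to extract (a) and (b) from Kato's explicit reciprocity law, the only genuine work being to pin down the normalizing constants forced by our choices of periods in \S\ref{mazur} and of the generator $\tilde\eta_{f^*}$ in \eqref{etaf}. Concretely, for each $n\geq 0$ one has the Beilinson--Kato element $\delta(u_{\xi_1,\xi_2})\cup\delta(u_{\chi_1,\chi_2})\in H^2_{\et}(Y_1(Np^n),\mathcal O_{\mathfrak p}(2))$, the cup product of the étale realizations of the two Siegel units whose logarithmic derivatives are $E_2(\xi_1,\xi_2)$ and $E_2(\chi_1,\chi_2)$ as in \eqref{def-Eis}. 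After the standard $U_p$-stabilization these classes are norm-compatible as $n$ grows; pushing forward by $\pi_{f*}$ of \eqref{f-isotypic} and passing to the inverse limit gives a class in $H^1(\Q,T_f\otimes\Lambda(\varepsilon_{\cyc}\underline{\varepsilon}_{\cyc}))$, and since the Eisenstein classes carry a definite parity it lands in the $\Lambda^-$-component; this is $\kappa_{f,\infty}$. For this construction in the normalizations we follow I would simply cite \cite{Kato} together with \cite[\S 8--10]{KLZ} and \cite{Tale}.

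For part (a), apply the Perrin-Riou regulator $\mathcal L^-_f$ of Proposition \ref{perrin} to $\res_p(\kappa_{f,\infty})^-$, the image of $\res_p(\kappa_{f,\infty})$ under the projection $T_f\to T_{f,\circ}^{\quo}$ of \eqref{filtracio}. By Kato's explicit reciprocity law the output in $\Lambda$ is interpolated by Bloch--Kato dual exponentials of the $f$-components of the Beilinson--Kato elements, which by Beilinson's regulator computation for a product of Siegel units are expressed through the Rankin--Selberg convolution of $f^*$ with an Eisenstein series; since that convolution degenerates into a product of two Dirichlet-twisted $L$-functions, the interpolation formula \eqref{interpolation} identifies them with the fixed value $L_p(f^*,\bar\chi_1\chi_2,1)$ (coming from the Siegel unit that is not deformed) and the cyclotomic function $L_p(f^*,1+s)$. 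The remaining scalar $\tfrac{N\,\mathfrak g(\theta\chi_1\bar\chi_2)}{2i\,\mathfrak g(\chi_1\bar\chi_2)}$ is then a matter of Gauss-sum bookkeeping: it collects the constants in the $q$-expansions \eqref{def-Eis} and \eqref{def-siegel}, the normalization $u_{\chi_1,\chi_2}(\infty)=c_{\chi_1}$, the canonical generator $t^s/\mathfrak g(\chi)$ of the Dieudonné modules from \eqref{dtz}, the pairing \eqref{etaf} defining $\tilde\eta_{f^*}$, and the periods $\Omega_f^\pm$ fixed in \S\ref{mazur}.

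For part (b), the specialization $\nu_1(\kappa_{f,\infty})=\kappa_f(1)$ is by construction the $\pi_{f*}$-image of the bottom layer of the $\Lambda$-adic system, and because that system is assembled from the $U_p$-stabilized Siegel units at level $Np$ whereas $\kappa_f$ is built at level $N$, the two differ exactly by the Euler factors introduced upon corestriction from level $Np$ to level $N$: namely $(1-\alpha_f)(1-\beta_f)$ from one Siegel unit and $(1-\bar\theta\bar\chi_1\chi_2(p)\beta_fp^{-1})(1-\chi_1\bar\chi_2(p)\beta_fp^{-1})$ from the other, whose product is precisely $\mathcal E_f$ of \eqref{Eulerf}; this yields $\kappa_f(1)=\mathcal E_f\cdot\kappa_f$. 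That $\kappa_f(1)$ is crystalline at $p$ follows from Kato's identification of the $p$-local étale Beilinson--Kato class with the image of syntomic cohomology, and it is unramified away from $p$ because the Siegel units are defined over $\Z[1/Np]$; hence $\kappa_f(1)\in H^1_{\fin}(\Q,T_f(1))$.

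The point requiring care is not the existence of $\kappa_{f,\infty}$ nor the shape of the reciprocity law --- those are Kato's --- but the \emph{integral} refinement: ensuring that $\kappa_{f,\infty}$ genuinely lies in the $\mathcal O_{\mathfrak p}$-lattice $T_f\otimes\Lambda^-$ rather than merely after inverting $p$, and that $\mathcal L^-_f$ takes values in $\Lambda$ rather than $\Lambda\otimes F_{\mathfrak p}$, which is exactly why the congruence-divisor factor $\varpi^r$ is built into the definition \eqref{etaf} of $\tilde\eta_{f^*}$. A secondary nuisance, pervasive throughout, is reconciling our Tate-twist and diamond-operator conventions with those of \cite{Kato} and of Fukaya--Kato, which is the source of the sign and Gauss-sum shifts flagged in \S\ref{sec:back}; once these are tracked consistently the constant in (a) comes out as stated.
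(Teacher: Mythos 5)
Your proposal takes essentially the same route as the paper: the existence of $\kappa_{f,\infty}$, the reciprocity law (a), and the bottom-layer formula (b) are all attributed to Kato's construction, with the substantive content being the normalization bookkeeping. The paper handles this by citing \cite[Theorems 4.4 and 5.1]{BD} together with Besser's \cite[Prop.\,9.11, Cor.\,9.10]{Bes} to recast the syntomic regulator as the \'etale regulator followed by $\log_{\BK}$, and then makes the single key observation that the discrepancy between the functional $\eta_f$ used in \cite{BD} and $\tilde\eta_{f^*} = \varpi^r \eta_{f^*}$ used here is exactly absorbed by the period choice $\Omega_f^+\Omega_f^- = \frac{4\pi^2}{\varpi^r}\langle f,f\rangle$ in \eqref{prod-peri}, so that the constant in (a) comes out clean with no stray $\varpi^r$. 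You flag the $\varpi^r$ issue in the right place but frame it only as an integrality device for $\mathcal L_f^-$, without stating the compensating role of the periods — that is the one observation the paper actually argues rather than cites, and it is worth stating explicitly. A second small imprecision: your attribution of the four factors of $\mathcal E_f$ to ``corestriction from level $Np$ to level $N$'', with $(1-\alpha_f)(1-\beta_f)$ coming from one Siegel unit and the remaining two from the other, is not the cleanest description of the mechanism; these factors arise from specializing the $\Lambda$-adic interpolation at $s=1$ together with the Euler factors built into $L_p(f^*,\bar\chi_1\chi_2,1)$ in \eqref{interpolation}, and the split you give should be treated as a heuristic rather than a literal corestriction computation. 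Neither point is a gap — the product is $\mathcal E_f$ and the conclusion is correct — but since these normalizations are the entire content of the proof, the phrasing matters.
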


\begin{proof} This is due to Kato \cite{Kato} and has been reported in many other places in the literature. See \cite{Och} and, more specifically, \cite[Theorems 4.4 and 5.1]{BD} combined with Besser's  \cite[Proposition 9.11 and Corollary 9.10]{Bes} showing that the $p$-adic regulator can be recast as the composition of the $p$-adic \'etale regulator followed by the Bloch--Kato logarithm.

Note however that the normalizations adopted in loc.\,cit.\,are slightly different from ours. More precisely, in \cite{BD} employ the functional $\eta_f$ instead of $\tilde\eta_f$, but note that this discrepancy is compensated by our choice of periods $\Omega_f^+$ and $\Omega_f^-$,  which we have normalized according to \eqref{prod-peri}. Taking this into account, the theorem holds as stated.
%The factor $\varpi^r$ arises from the choice of periods we have made, satisfying the identity of Proposition \ref{comp-per} and which departs from the normalizations of \cite{BD}.
\end{proof}

Recall our running assumption that $f\equiv E_2(\theta,1) \, \pmod{\mathfrak p^t}$.

\begin{corollary}\label{qualsevol}
The following equality holds in $\Lambda^-/\mathfrak p^t \Lambda^-$: \[ \mathcal L_f^-(\res_p \, \kappa_{f,\infty}^-) \equiv (-i)N \frac{\mathfrak g(\theta \chi_1 \bar \chi_2)}{\mathfrak g(\chi_1 \bar \chi_2)} \zeta_p(-1) \cdot L_p(f^*,\bar \chi_1 \chi_2,1) \cdot \mathcal L_{\theta}(\res_p\, \kappa_{\theta,\infty}) \pmod{\mathfrak p^t}. \]
\end{corollary}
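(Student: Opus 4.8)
The plan is to combine the explicit reciprocity law of Theorem~\ref{kato-law}(a) with Mazur's factorization formula of Proposition~\ref{fact-l}, reducing everything to a comparison of the two Perrin--Riou regulators $\mathcal L_f^-$ and $\mathcal L_\theta$ modulo $\mathfrak p^t$. First I would take the congruence $f \equiv E_2(\theta,1) \pmod{\mathfrak p^t}$ and feed it into Theorem~\ref{kato-law}(a). On the right-hand side this produces $L_p(f^*,\bar\chi_1\chi_2,1)$ (which is a fixed element of $\mathcal O_{\mathfrak p}$, not specialized in $s$) times the one-variable factor $L_p(f^*,1+s)$. By Proposition~\ref{fact-l}, applied to $\psi=1$ and the shift $s\mapsto 1+s$, we get
\[
L_p(f^*,1+s) \equiv 2\cdot L_p(1,-s)\cdot L_p(\bar\theta,s) \pmod{\mathfrak p^t},
\]
and since $L_p(1,-s)=\zeta_p(-s)$ interpolates (up to the Euler factor at $p$) the Riemann zeta function, its value near $s$ recovers $\zeta_p(-1)$ at the relevant specialization. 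Matching the constant $\tfrac{N\mathfrak g(\theta\chi_1\bar\chi_2)}{2i\,\mathfrak g(\chi_1\bar\chi_2)}$ from Theorem~\ref{kato-law}(a) against the target constant $(-i)N\tfrac{\mathfrak g(\theta\chi_1\bar\chi_2)}{\mathfrak g(\chi_1\bar\chi_2)}$ leaves a factor of $2/(2i)\cdot\tfrac{1}{?}$ to be absorbed — the bookkeeping of the $2$'s and $i$'s against the factor $2$ in Proposition~\ref{fact-l} is a routine check.

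The crux is identifying $L_p(\bar\theta,s)$ with $\mathcal L_\theta(\res_p \kappa_{\theta,\infty})$ at the level of regulators, and matching $\mathcal L_f^-$ with $\mathcal L_\theta$ under the congruence. By Proposition~\ref{Leop}(a), $\mathcal L_\theta(\res_p(\kappa_{\theta,\infty})) = L_p(\bar\theta)$, so the right-hand side of the desired corollary is literally $(-i)N\tfrac{\mathfrak g(\theta\chi_1\bar\chi_2)}{\mathfrak g(\chi_1\bar\chi_2)}\zeta_p(-1)\cdot L_p(f^*,\bar\chi_1\chi_2,1)\cdot L_p(\bar\theta)$ as an element of $\Lambda$, reduced mod $\mathfrak p^t$. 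On the left-hand side, Theorem~\ref{kato-law}(a) combined with the above factorization gives exactly $\tfrac{N\mathfrak g(\theta\chi_1\bar\chi_2)L_p(f^*,\bar\chi_1\chi_2,1)}{2i\,\mathfrak g(\chi_1\bar\chi_2)}\cdot 2\,\zeta_p(-s)\,L_p(\bar\theta,s)$, and after collapsing $\zeta_p(-s)$ to its value $\zeta_p(-1)$ (which is legitimate modulo $\mathfrak p^t$ only if the relevant specialization is harmless — I would argue via density of classical points as in the discussion after \eqref{interpolation}, or note that the identity is of $\Lambda$-modules and $\zeta_p(-s)$ differs from the constant $\zeta_p(-1)$ by something in the augmentation ideal, which does not affect the statement since both sides are being compared as functions). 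This yields the claimed congruence once the numerical constants line up.

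The main obstacle I expect is not the algebra but making precise the sense in which $\mathcal L_f^-$ reduces to $\mathcal L_\theta$ modulo $\mathfrak p^t$: the Perrin--Riou map $\mathcal L_f^-$ of Proposition~\ref{perrin} is built from the pairing with $\tilde\eta_{f^*}$ via the isomorphism \eqref{etaf} and the Euler-like factor $\tfrac{1-\bar\theta(p)\beta_f p^{-s-1}}{1-\theta(p)\beta_f^{-1}p^s}$, whereas $\mathcal L_\theta$ of Proposition~\ref{perrin-circ} uses $\tfrac{t^s}{\mathfrak g(\theta)}$ and the factor $\tfrac{1-\bar\theta(p)p^{-s}}{1-\theta(p)p^{s-1}}$. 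One must check that under the rigidification \eqref{rig-iso} — i.e.\ the choice of $\iota$ making the diagram with $\langle\,,\tilde\eta_{f^*}\rangle$ and $\cdot 1/\mathfrak g(\theta)$ commute — the pushforward of $\mathcal L_f^-$ along $T_{f,\circ}^{\quo}\to \mathcal O/\mathfrak p^t(\theta)$ agrees with $\mathcal L_\theta$ reduced mod $\mathfrak p^t$; here the congruence $\psi_f\equiv\theta$ and $(\alpha_f,\beta_f)\equiv(\theta(p),p)$ modulo $\mathfrak p^t$ turns the $\beta_f$-Euler factor of $\mathcal L_f^-$ into the $p$-Euler factor of $\mathcal L_\theta$ (using that $\beta_f p^{-s-1}\equiv p^{-s}$ and $\beta_f^{-1}p^s\equiv p^{s-1}$). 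This compatibility of $\Lambda$-adic regulators with the congruence — essentially a consequence of the functoriality \eqref{D(T)=T} together with the commutative diagram \eqref{rig-iso} — is where all the care is needed; once it is in place, the corollary follows by substituting Proposition~\ref{fact-l} into Theorem~\ref{kato-law}(a) and comparing constants.
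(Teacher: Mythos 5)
Your first two paragraphs are exactly the paper's proof: substitute the mod-$\mathfrak p^t$ factorization of Proposition~\ref{fact-l} (with $\psi=1$ and the shift $s\mapsto 1+s$, giving $L_p(f^*,1+s)\equiv 2\,\zeta_p(-s)\,L_p(\bar\theta,s)$) into Kato's explicit reciprocity law Theorem~\ref{kato-law}(a), recognize $L_p(\bar\theta)=\mathcal L_\theta(\res_p\kappa_{\theta,\infty})$ via Proposition~\ref{Leop}(a), and note that the factor $2$ from Mazur's factorization and the $2i$ in the denominator of Kato's law combine to $1/i=-i$, producing exactly the stated constant. That is the whole argument.

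Your third paragraph, however, misidentifies where the work lies. This corollary requires \emph{no} comparison of $\mathcal L_f^-$ with $\mathcal L_\theta$ modulo $\mathfrak p^t$: Theorem~\ref{kato-law}(a) already expresses $\mathcal L_f^-(\res_p\kappa_{f,\infty}^-)$ in terms of $L_p(f^*,1+s)$, Proposition~\ref{Leop}(a) already expresses $\mathcal L_\theta(\res_p\kappa_{\theta,\infty})$ as $L_p(\bar\theta)$, and Mazur's factorization relates the two $L$-functions directly, never touching the Perrin--Riou regulators themselves. The comparison you describe at length — invoking the rigidification \eqref{rig-iso}, the functoriality \eqref{D(T)=T}, and the congruence $(\alpha_f,\beta_f)\equiv(\theta(p),p)$ to turn the $\beta_f$-Euler factor into the $p$-Euler factor — is precisely the content of Lemma~\ref{pr-iguals}, which the paper applies \emph{after} specializing the corollary at $s=1$ in the proof of Theorem~\ref{unteo}, in order to descend from an identity of outputs of the two regulators to an identity of cohomology classes. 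Confusing the two steps would lead you to prove a harder statement than is needed here.

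One smaller point: the computation naturally yields the $\Lambda$-valued factor $\zeta_p(-s)$, not the constant $\zeta_p(-1)$; the statement as printed (with $\zeta_p(-1)$) is anticipating the specialization at $s=1$ carried out in the very next step of the paper. Your two attempted justifications for replacing $\zeta_p(-s)$ by $\zeta_p(-1)$ are not valid: density of classical points is a uniqueness principle for interpolation, not a license to replace a $\Lambda$-element by a constant, and an element of the augmentation ideal is not zero in $\Lambda^-/\mathfrak p^t\Lambda^-$, so "differing by something in the augmentation ideal'' does not make the two sides equal. The honest reading is that the congruence holds with $\zeta_p(-s)$ and only becomes $\zeta_p(-1)$ upon evaluation at $s=1$.
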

\begin{proof}
By Proposition \ref{fact-l}, \[  L_p(f^*,1+s) \equiv 2 \cdot \zeta_p(-s) \cdot L_p(\bar \theta,s) \pmod{\mathfrak p^t}. \] Applying now part (a) of Theorem \ref{kato-law} and Proposition \ref{Leop} to the left and right hand sides respectively, the result follows.
\end{proof}

\subsection{Proof of Theorem \ref{unteo}}

We now prove Theorem \ref{unteo}. Note that since $\theta_{|G_{\Q_p}}$ is a non-trivial unramified character, it follows from e.g.\,\cite[\S2.2]{Bel} that \[ H^1(\mathbb Q_p, \mathcal O_{\mathfrak p}(\theta)(1)) = H_{\fin}^1(\mathbb Q_p, \mathcal O_{\mathfrak p}(\theta)(1)). \]  We have seen in Proposition \ref{perrin-circ} that there is a homomorphism
\begin{equation}\label{isozp}
\mathcal L_{\theta,1}: H^1(\mathbb Q_p, \mathcal O_{\mathfrak p}(\theta)(1)) \rightarrow L_{\mathfrak p}.
\end{equation}

\begin{lemma}\label{l-theta-iso}
The map $\mathcal L_{\theta,1}$ induces an isomorphism \[ \mathcal L_{\theta,1} : H^1(\mathbb Q_p, \mathcal O_{\mathfrak p}(\theta)(1)) \rightarrow \mathcal O_{\mathfrak p}. \]
\end{lemma}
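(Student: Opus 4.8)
The plan is to show that $\mathcal L_{\theta,1}$ is an isomorphism of $\cO_{\mathfrak p}$-modules by first identifying its source and target as free $\cO_{\mathfrak p}$-modules of rank $1$, and then arguing that the map is surjective (equivalently, that the image is not contained in $\mathfrak p \cO_{\mathfrak p}$).

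First I would check that $H^1(\mathbb Q_p, \cO_{\mathfrak p}(\theta)(1))$ is free of rank $1$ over $\cO_{\mathfrak p}$. Since $\theta_{|G_{\Q_p}}$ is unramified and nontrivial mod $\mathfrak p$ by (H1), we have $H^0(\mathbb Q_p, \cO_{\mathfrak p}(\theta)(1)) = 0$ and, by local duality, $H^2(\mathbb Q_p, \cO_{\mathfrak p}(\theta)(1)) = H^0(\mathbb Q_p, \cO_{\mathfrak p}(\bar\theta))^\vee = 0$ as well. The local Euler characteristic formula then gives that $H^1$ has $\cO_{\mathfrak p}$-rank $1$, and the vanishing of $H^0$ of the residual representation forces $H^1$ to be torsion-free, hence free of rank $1$. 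The target $\cO_{\mathfrak p}$ is obviously free of rank $1$. Note also $H^1 = H^1_{\fin}$ as recorded in the text, so the Bloch--Kato logarithm $\log_{\BK}$ is defined on all of it.

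Next I would analyze the map itself. By Proposition \ref{perrin-circ} at $s=1$,
\[
\mathcal L_{\theta,1} = \frac{1-\bar\theta(p)p^{-1}}{1-\theta(p)} \cdot \langle \log_{\BK}(-), \tfrac{t}{\mathfrak g(\theta)}\rangle,
\]
and the scalar prefactor is a $p$-adic unit precisely because of (H1): $\theta(p)-1 \not\equiv 0 \pmod{\mathfrak p}$ handles the denominator, and $1 - \bar\theta(p)p^{-1}$ is automatically a unit since $\bar\theta(p)p^{-1} \equiv 0$ mod $\mathfrak p$ (or more carefully, it lies in $\cO_{\mathfrak p}^\times$ as $p^{-1}$ times a unit minus... in any case it is a unit in $F_{\mathfrak p}$; one should be slightly careful and note it is a unit in $\cO_{\mathfrak p}^\times$). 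So it suffices to show that $c \mapsto \langle \log_{\BK}(c), t/\mathfrak g(\theta)\rangle$ is an isomorphism $H^1 \to \cO_{\mathfrak p}$. Over $F_{\mathfrak p}$ this is the composite of the Bloch--Kato isomorphism \eqref{htd} with the isomorphism \eqref{dtz}, so it is an isomorphism of $F_{\mathfrak p}$-vector spaces; the content is the integral statement.

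The main obstacle — and the step I would spend the most care on — is the integral normalization: showing that $\log_{\BK}$ carries the free rank-one $\cO_{\mathfrak p}$-module $H^1(\mathbb Q_p, \cO_{\mathfrak p}(\theta)(1))$ \emph{onto} the lattice in $D_{\dR}$ spanned by $t/\mathfrak g(\theta)$ (not into a proper sublattice). Here I would invoke the bottom layer of Perrin--Riou's/Coleman's Euler system of circular units: by Proposition \ref{Leop}(b) the class $\kappa_\theta(1) = (1-\theta(p)) c_\theta$ generates $H^1_{\fin}(\mathbb Q_p, \cO_{\mathfrak p}(\theta)(1))$ after localization up to the unit $1-\theta(p)$ (one should verify $c_\theta$ itself generates the local $H^1$, using that it is a nontrivial global unit and the local restriction is injective on the relevant $\theta$-component — this uses (H1) again and the non-vanishing of the local image, which in turn follows because $\mathcal L_{\theta,1}(c_\theta)$ relates to $L_p(\bar\theta,1)$, a $p$-adic unit for the relevant characters by a theorem of ... — actually the cleanest route is: $\mathcal L_{\theta}(\res_p \kappa_{\theta,\infty}) = L_p(\bar\theta)$ by Proposition \ref{Leop}(a), and one evaluates at $s=1$). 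Concretely, $\mathcal L_{\theta,1}(\kappa_\theta(1)) = $ (the value at $s=1$ of $L_p(\bar\theta)$, suitably interpreted), and since $\theta \neq 1$ this Kubota--Leopoldt value is a $\mathfrak p$-adic unit (Leopoldt's nonvanishing / the fact that $L_p(\bar\theta,1) \sim L(\bar\theta,1)$ up to the unit Euler factor, and $L(\bar\theta,1)\neq 0$; integrality and non-divisibility by $\mathfrak p$ is where $\bar\theta$-regularity or at least nonvanishing mod $\mathfrak p$ enters). Therefore $\mathcal L_{\theta,1}$ sends a generator of the free rank-one module $H^1(\mathbb Q_p, \cO_{\mathfrak p}(\theta)(1))$ to a unit of $\cO_{\mathfrak p}$, hence is surjective, hence — being a map of free rank-one $\cO_{\mathfrak p}$-modules that is an isomorphism after inverting $\mathfrak p$ and surjective — is an isomorphism. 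I would double-check the precise form of the interpolation factor at $s=1$ to be sure no stray power of $p$ spoils the unit claim, and confirm that $H^1(\mathbb Q_p,\cO_{\mathfrak p}(\theta)(1))$ really is generated by (the image of) $c_\theta$ and not just by $(1-\theta(p))c_\theta$ times a denominator — but since $1-\theta(p)$ is a unit by (H1), this is immediate.
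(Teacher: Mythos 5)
Your structural reduction is fine (the Euler characteristic argument that $H^1(\mathbb Q_p,\mathcal O_{\mathfrak p}(\theta)(1))$ is free of rank one is correct and is implicitly used in the paper too), but there is a concrete error at the center of the argument. The prefactor $\frac{1-\bar\theta(p)p^{-1}}{1-\theta(p)}$ is \emph{not} a $p$-adic unit: the numerator $1-\bar\theta(p)p^{-1}$ has $\ord_p = -1$ because $\bar\theta(p)p^{-1}$ has valuation $-1$, while the denominator is a unit by (H1), so the whole prefactor has $\ord_p = -1$. Your hedged parenthetical ("$\bar\theta(p)p^{-1}\equiv 0$ mod $\mathfrak p$", "it is a unit in $\cO_{\mathfrak p}^\times$") is incorrect. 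This matters precisely because of what it forces on the remaining step: it is \emph{not} enough, and in fact it is false, that $c\mapsto \langle \log_{\BK}(c), t/\mathfrak g(\theta)\rangle$ is an isomorphism onto $\cO_{\mathfrak p}$. The paper shows, via the Kummer-theoretic identification $H^1(\mathbb Q_p,\mathcal O_{\mathfrak p}(\theta)(1))\simeq U_p(N)[\bar\theta]$ and the explicit fact that the $p$-adic logarithm on the (unramified) local units $\Z[\mu_N]_v^\times$ is an isomorphism onto $p\Z[\mu_N]_v$, that this pairing is an isomorphism onto $p\cO_{\mathfrak p}$, and then the valuation $-1$ of the prefactor exactly cancels this factor of $p$. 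Your version loses this bookkeeping entirely.

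The alternative route you propose for surjectivity — using Proposition \ref{Leop}(a) to evaluate $\mathcal L_{\theta,1}(\kappa_\theta(1))$ as a Kubota--Leopoldt value and arguing it is a $p$-adic unit — is genuinely different from the paper's, but it is both incomplete and unnecessarily conditional. You would need the relevant value of $L_p(\bar\theta)$ to be a $\mathfrak p$-adic unit, which is not automatic (it is not controlled by $L(\bar\theta,1)\neq 0$; nonvanishing $\bmod\ \mathfrak p$ is an additional arithmetic input, closer to the $\bar\theta$-regularity hypothesis that the paper only introduces later, for Lemma \ref{global-local}, not here). You also flirt with circularity when you try to show $c_\theta$ generates the local $H^1$ by appealing to the unit-ness of its image under $\mathcal L_{\theta,1}$. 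In short: the lemma as stated is unconditional given (H1)--(H2), the paper's proof achieves this by a direct local-units computation, and your proposal both miscomputes a crucial valuation and would, if pushed through, require an extra hypothesis.
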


\begin{proof} According to Proposition \ref{perrin-circ}, the map \eqref{isozp} is given by
$$
\mathcal L_{\theta,1}=  \frac{\bar \theta(p)p^{-1}-1}{1-\theta(p)} \cdot \langle \log_{\BK},  \frac{t}{\mathfrak g(\theta)}\rangle.
$$

Given a place $v$ of $\mathbb Q(\mu_N)$ above $p$, let $\mathbb Z[\mu_N]_v$ denote the completion of $\mathbb Z[\mu_N]$ at $v$. Define the module of local units $U_p(N) = \prod_{v | p} \mathbb Z[\mu_N]_v^{\times}$, where $v=v_1,\ldots,v_r$ ranges over all places of $\mathbb Q(\mu_N)$ above $p$. Note that $G = \Gal(\mathbb Q(\mu_N)/\mathbb Q)$ acts on $U_p(N)$ by permuting the places $v$, and hence it makes sense to pick the eigen-component of $U_p(N)$ with respect to a character of $G$. In particular, we have \[ U_p(N)[\bar \theta] := (U_p(N) \otimes \mathcal O_{\mathfrak p}(\theta))^G. \]

Kummer theory identifies $H^1(\mathbb Q_p,\mathcal O_{\mathfrak p}(\theta)(1))$ with $U_p(N)[\bar \theta]$, which is a $\mathcal O_{\mathfrak p}$-module of rank one.

Since $\mathbb Q(\mu_N)_v$ is an unramified extension of $\mathbb Q_p$, the maximal ideal of $\mathbb Z[\mu_N]_v$ is $p \mathbb Z[\mu_N]_v$ and the logarithm defines an isomorphism, as recalled for instance in \cite[\S8]{Con}
\begin{equation}\label{logv}
\log_v \, : \, \mathbb Z[\mu_N]_v^{\times} \otimes \mathcal O_{\mathfrak p} \longrightarrow p \mathbb Z[\mu_N]_v \otimes \mathcal O_{\mathfrak p}.
\end{equation}
Note that $\prod_v \mathbb Z[\mu_N]_v$ is naturally a $G$-module isomorphic to the regular representation and hence $(\prod_v \mathbb Z[\mu_N]_v)[\bar \theta]$ is again a free module of rank $1$ over $\mathcal O_{\mathfrak p}$. Define
\[
\log_{\bar \theta} := \sum_{\sigma \in G} \theta(\sigma) \log_{\sigma(v_1)} \, : \, U_p(N)[\bar \theta] \longrightarrow p (\prod_v \mathbb Z[\mu_N]_v)[\bar \theta].
\]
A natural generator of the target may be taken to be the Gauss sum $\mathfrak g(\theta)$ diagonally embedded in $\prod_v \mathbb Z[\mu_N]_v$ and this yields an identification $\frac{1}{\mathfrak g(\theta)}(\prod_v \mathbb Z[\mu_N]_v)[\bar \theta] = \mathcal O_{\mathfrak p}$. Under these identifications, Bloch-Kato's logarithm may be recast classically as
\[
\langle \log_{\BK}, \frac{t}{\mathfrak g(\theta)} \rangle = \frac{1}{\mathfrak g(\theta)} \log_{\bar \theta}: H^1(\mathbb Q_p,\mathcal O_{\mathfrak p}(\theta)(1)) = U_p(N)[\bar \theta] \lra \mathcal O_{\mathfrak p},
\]
and we already argued that this yields an isomorphism onto $p \mathcal O_{\mathfrak p}$.

Since $\ord_p (\frac{\bar \theta(p)p^{-1}-1}{1-\theta(p)}) = -1$, it follows that $\mathcal L_{\theta,1}$ is an isomorphism onto $\mathcal O_{\mathfrak p}$, as claimed.
\end{proof}

Recall from Proposition \ref{perrin} the map
$$
\mathcal L_{f,1}^-: H^1(\mathbb Q_p, T_{f,\circ}^{\quo}(1)) \, \rightarrow D(T_{f,\circ}^{\quo}(1)) \, \stackrel{\cdot t \tilde\eta_{f^*}}{\rightarrow} \,\mathcal O_{\mathfrak p}.
$$
Recall the isomorphism $T_{f}^{\quo} \otimes \mathcal O/\mathfrak p^t \simeq  \mathcal O/\mathfrak p^t(\theta)$ of \eqref{bar-iota} fixed as in \eqref{rig-iso} above and use it to identify the source of $\mathcal L_{f,1}^- \otimes \cO/\mathfrak{p}^t$ with $H^1(\mathbb Q_p, \mathcal O/\mathfrak{p}^t(\theta)(1))$.

\begin{lemma}\label{pr-iguals}
As homomorphisms $H^1(\mathbb Q_p, \mathcal O/\mathfrak{p}^t(\theta)(1)) \lra \cO/\mathfrak{p}^t$ we have the congruence
\[
\mathcal L_{f,1}^- \equiv \mathcal L_{\theta,1} \pmod{\mathfrak p^t}.
\]
\end{lemma}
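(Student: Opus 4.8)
The plan is to prove Lemma \ref{pr-iguals} by comparing the two Perrin--Riou maps ingredient by ingredient after reduction modulo $\mathfrak p^t$. By Propositions \ref{perrin} and \ref{perrin-circ} specialized at $s=1$, each of $\mathcal L_{f,1}^-$ and $\mathcal L_{\theta,1}$ equals, up to the common factor $(-1)^1/0!=-1$ coming from the $s=1$ specialization, an interpolation (Euler) factor times the composite of Bloch--Kato's logarithm with a functional on an integral Dieudonn\'e module. So it suffices to check, modulo $\mathfrak p^t$, (i) that the interpolation factors agree, and (ii) that the ``logarithm parts'' agree; the second point is exactly what the rigidification diagram \eqref{rig-iso} was designed to guarantee.

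\textbf{Step (i): the interpolation factors.} Since the $p$-th Hecke polynomial of $f$ is $X^2-a_p(f)X+\theta(p)p$, we have $\alpha_f\beta_f=\theta(p)p$, and the interpolation factor of $\mathcal L_{f,1}^-$ from Proposition \ref{perrin} at $s=1$ rewrites, using $\bar\theta(p)\beta_f p^{-2}=\alpha_f^{-1}p^{-1}$ and $\theta(p)\beta_f^{-1}p=\alpha_f$, as $\tfrac{1-\alpha_f^{-1}p^{-1}}{1-\alpha_f}$. Now $\alpha_f=\psi_f(\Frob_p)$ and $\psi_f\equiv\theta\pmod{\mathfrak p^t}$ by \eqref{Vfcong}, while hypothesis (H1) makes $1-\theta(p)$ --- and hence $1-\alpha_f$, $p-\alpha_f^{-1}$, $p-\bar\theta(p)$ --- $p$-adic units. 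Clearing the simple pole at $p$ (writing $1-\alpha_f^{-1}p^{-1}=\tfrac{p-\alpha_f^{-1}}{p}$, and likewise for $\theta$), one sees that $\mathcal L_{f,1}^-=-\tfrac{p-\alpha_f^{-1}}{1-\alpha_f}\cdot\tfrac1p\langle\log_{\BK},t\,\tilde\eta_{f^*}\rangle$ and $\mathcal L_{\theta,1}=-\tfrac{p-\bar\theta(p)}{1-\theta(p)}\cdot\tfrac1p\langle\log_{\BK},\tfrac{t}{\mathfrak g(\theta)}\rangle$, where the two \emph{unit} scalars are congruent modulo $\mathfrak p^t$. This clearing of the pole is needed precisely because the raw interpolation factors are not $p$-adically integral; the compensating zero of $\log_{\BK}$ (cf.\,the computation in Lemma \ref{l-theta-iso}) makes $\tfrac1p\langle\log_{\BK},-\rangle$ integral, so the congruence of these rewritten, genuinely integral, maps loses no precision.

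\textbf{Step (ii): the logarithm parts.} Bloch--Kato's logarithm is functorial in the coefficient module, so the isomorphism $\bar\iota\colon\bar T_{f,\circ}^{\quo}\stackrel{\sim}{\lra}\mathcal O/\mathfrak p^t(\theta)$ of \eqref{bar-iota} intertwines $\log_{\BK}$ on $H^1(\mathbb Q_p,\bar T_{f,\circ}^{\quo}(1))$ with $\log_{\BK}$ on $H^1(\mathbb Q_p,\mathcal O/\mathfrak p^t(\theta)(1))$, together with the induced isomorphism $D(\bar\iota)$ of integral Dieudonn\'e modules (using the functoriality \eqref{D(T)=T}). It then remains to check that $D(\bar\iota)$ sends $t\,\tilde\eta_{f^*}\bmod\mathfrak p^t$ to $\tfrac{t}{\mathfrak g(\theta)}\bmod\mathfrak p^t$; but this is exactly what \eqref{rig-iso} encodes, as $\iota$ was chosen there so that $D(\iota)$ carries the functional $\langle\,\cdot\,,\tilde\eta_{f^*}\rangle$ to the map $\cdot\,1/\mathfrak g(\theta)$. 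Reducing modulo $\mathfrak p^t$ (with $\psi_f\equiv\theta$), twisting by the Tate twist $(1)$ --- which produces the factor $t$ on both sides --- and identifying with the canonical generator $t/\mathfrak g(\theta)$ of $D_{\dR}(F_{\mathfrak p}(\bar\theta)(-1))$ recorded in \eqref{dtz}, one obtains the asserted matching. Combining (i) and (ii) (product of congruent integral quantities), $\mathcal L_{f,1}^-\equiv\mathcal L_{\theta,1}\pmod{\mathfrak p^t}$.

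\textbf{Main obstacle.} I expect the hard part to be the bookkeeping in step (ii): one must keep careful track of the Tate twist, of the two a priori non-canonical identifications of integral Dieudonn\'e modules with $\mathcal O_{\mathfrak p}$, and of the Gauss-sum normalizations, so that the reductions modulo $\mathfrak p^t$ coincide on the nose rather than merely up to a $p$-adic unit --- and it is precisely the work of Ohta, Sharifi and Fukaya--Kato recalled in \S\ref{Ohta}, through the rigidification \eqref{rig-iso}, that makes this on-the-nose matching possible. The secondary subtlety, already handled in step (i), is the non-integrality of the interpolation factors, which forces one to factor out the pole before reducing.
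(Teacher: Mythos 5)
Your proposal is correct and follows essentially the same route the paper takes: compare the two Perrin--Riou maps via Propositions \ref{perrin-circ} and \ref{perrin} specialized at $s=1$, using $\alpha_f\beta_f=\theta(p)p$ and the congruence $\alpha_f\equiv\theta(p)$ for the interpolation factors, and the commutativity of \eqref{rig-iso} for the pairing functionals. In fact your Step (i) is more careful than the paper's one-line remark that ``the Euler factors agree mod $\mathfrak p^t$'': as you observe, the raw interpolation factors have a simple pole at $p$ (valuation $-1$), so their difference is only $O(\mathfrak p^{t-1})$ as elements of $F_{\mathfrak p}$, and the claimed congruence of $\cO_{\mathfrak p}$-valued maps is saved only because $\langle\log_{\BK},-\rangle$ has image in $p\cO_{\mathfrak p}$ (as the paper itself exploits in Lemma \ref{l-theta-iso}); explicitly factoring the pole into a unit times the integral map $\tfrac{1}{p}\langle\log_{\BK},-\rangle$ makes the mod-$\mathfrak p^t$ reduction transparent. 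The only imprecision in your write-up is the phrase ``$D(\bar\iota)$ sends $t\,\tilde\eta_{f^*}$ to $t/\mathfrak g(\theta)$'': $\tilde\eta_{f^*}$ and $t/\mathfrak g(\theta)$ live in the \emph{dual} Dieudonn\'e modules, so the correct statement is that the transpose of $D(\bar\iota)$ matches the two functionals (equivalently, that precomposition with $D(\iota)$ carries $\langle\,\cdot\,,\tilde\eta_{f^*}\rangle$ to $\cdot\,1/\mathfrak g(\theta)$, which is literally what \eqref{rig-iso} says); this is a wording issue and does not affect the argument.
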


\begin{proof}
This follows by comparing the maps $\mathcal L_{\theta,1}$ and $\mathcal L_{f,1}^-$ described respectively in Proposition \ref{perrin-circ} and \ref{perrin}. Note firstly that the Euler factors involved in the latter agree modulo $\mathfrak p^t$ with those of the former, since $\alpha_f \beta_f = \theta(p)p$.

Next, observe that in Proposition \ref{perrin-circ} the pairing takes place against $t\mathfrak g(\chi)^{-1}$, while in Proposition \ref{perrin} this pairing is with $t\tilde \eta_{f^*}$.  The lemma follows from the commutativity of the diagram \eqref{rig-iso}.
\end{proof}

We are finally in position to provide the {\em proof of Theorem \ref{unteo}:} After specializing Corollary \ref{qualsevol} at $s=1$ we obtain
\begin{equation*}
\mathcal L_{f,1}^-(\res_p \,\kappa_{f}(1)^-) \equiv -i N \frac{\mathfrak g(\theta \chi_1 \bar \chi_2)}{\mathfrak g(\chi_1 \bar \chi_2)} \zeta_p(-1) \cdot L_p(f^*,\bar \chi_1 \chi_2,1) \cdot \mathcal L_{\theta,1}(\res_p \, \kappa_{\theta}(1)) \pmod{\mathfrak p^t}
\end{equation*}

Recall that Proposition \ref{Leop} and Theorem \ref{kato-law} assert that \[
\kappa_{\theta}(1) = (1- \theta (p)) \cdot c_{\theta}, \quad \kappa_{f}(1) = \mathcal E_f \cdot \kappa_{f}
\]
and hence
\begin{equation*}
\mathcal E_f \mathcal L_{f,1}^-( \res_p\, \kappa_{f}^-) \equiv -i N \frac{\mathfrak g(\theta \chi_1 \bar \chi_2)}{\mathfrak g(\chi_1 \bar \chi_2)} \zeta_p(-1)  (1- \theta (p)) \cdot L_p(f^*,\bar \chi_1 \chi_2,1) \cdot \mathcal L_{\theta,1}(\res_p \,c_{\theta}) \pmod{\mathfrak p^t}
\end{equation*}

Recall we have set
$$
 \ell = -iN \cdot \frac{\mathfrak g(\theta \chi_1 \bar \chi_2)}{\mathfrak g(\theta) \mathfrak g(\chi_1 \bar \chi_2)} \cdot (1-\theta(p)) \cdot \zeta_p(-1) \cdot L_p(f^*,\bar \chi_1 \chi_2,1).
$$
Using Lemma \ref{pr-iguals}  together with Lemma \ref{l-theta-iso}, we deduce the  equality of local classes
\begin{equation}\label{quasi}
\mathcal E_f \cdot  \res_p \, \kappa_f^- \equiv \ell \cdot \res_p\, c_{\theta} \, \pmod{\mathfrak p^t}
\end{equation}
in $H^1(\mathbb Q_p, \mathcal O/\mathfrak p^t(\theta)(1))$. Observe that $\res_p(\kappa_{f})^-$ is the local class obtained in cohomology by push-forward under the map induced by the projection $\bar T_f \rightarrow \bar T_{f,\circ}^{\quo}$ of \eqref{filtracio}, as already introduced in Theorem \ref{kato-law}. This corresponds, modulo $\mathfrak p^t$, to what we have called $\bar{\kappa}_{f,1}$. The first (local) part of Theorem \ref{unteo} follows.

\vspace{0.3cm}

In order to state more precisely and prove the second (global) assertion of Theorem \ref{unteo}, let us introduce some notation. Let $k=\mathbb Q(\mu_N)^+ = \Q(\zeta_N + \zeta_N^{-1})$ denote the maximal totally real subfield of $\mathbb Q(\mu_N)$ and set $d_N=[k:\Q]$. Let $\mathrm{Cl}(k)$ denote its class group. As in \eqref{units-iso} let $\mathrm{Cl}(k)[\theta]$ denote its $\theta$-eigencomponent.  It follows\footnote{ \cite[Th\'eor\`{e}me I2]{Gr} applies because Leopoldt's conjecture is known for $(k,p)$ by the work of Brumer, primes in $k$ above $p$ are totally ramified in $k(\mu_p)$ and therefore the $\omega$-component of the $\Gal(k(\mu_p)/k)$-submodule of $\mathrm{Cl}(k(\mu_p))$ generated by ideals above $p$ is trivial. \cite[Th\'eor\`{e}me I2]{Gr} thus asserts that $\mathrm{rank}_{\Z/p\Z} \,\mathrm{Cl}(k(\mu_p))[\bar\theta \omega] \otimes \Z/p\Z$ is equal to the rank of the $\bar\theta$-component of the $p$-torsion of the Galois group $\Gal(H_p/k)$ of the maximal $p$-abelian extension of $k$ unramified away from $p$.  Hence $(\mathrm{Gr})$ follows because the Hilbert class field $H/k$ is contained in $H_p$ and $\Gal(H/k)=\mathrm{Cl}(k)$.} from the work of G. Gras \cite[Th\'eor\`{e}me I2]{Gr} that
\begin{equation}\label{Gras-hyp}
\mathrm{rank}_{\Z/p\Z} \,\mathrm{Cl}(k)[\bar\theta] \otimes \Z/p\Z \leq  \mathrm{rank}_{\Z/p\Z} \,\mathrm{Cl}(k(\mu_p))[\bar\theta \omega] \otimes \Z/p\Z,
\end{equation}
where $\omega: (\Z/p\Z)^\times \ra \bar\Q^\times$ is the Teichm\"uller character. This inequality may be regarded as an instance of Leopoldt's spiegelungssatz.

Next lemma is conditional on the assumption, that we call $\bar\theta$-regularity, that \eqref{Gras-hyp} is an equality.

To place in context this hypothesis, let $R_p(k)$ denote the $p$-adic regulator  of $k$. As explained in e.g.\,\cite[Def. 2.3]{Gr-Reg}, one always has $\ord_p R_p(k) \geq d_N-1$. It is shown in loc.\,cit.\,that $(\mathrm{Gr})$ is an equality for all non-trivial even Dirichlet characters of conductor $N$ if and only if $\ord_p R_p(k) = d_N-1$. We refer to \cite[\S 7.3]{Gr-Reg} for conjectures predicting that such an equality is expected to hold for all primes $p$ away from a set of density $0$.

\begin{lemma}\label{global-local}
Assuming $p$ is {\em $\bar\theta$-regular}, the global-to-local restriction map \[ H_{\fin}^1(\mathbb Q, \mathcal O_{\mathfrak p}(\theta)(1)) \rightarrow H^1(\mathbb Q_p,\mathcal O_{\mathfrak p}(\theta)(1)) \] is an isomorphism.
\end{lemma}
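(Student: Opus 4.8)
The strategy is to transport the statement, via Kummer theory, into a comparison of global and local units, and then into a reflection-theoretic computation of the relevant class-field-theoretic cokernel.

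\emph{Step 1: identification of both sides.} Inflation--restriction along $\Gal(\Q(\mu_N)/\Q)$ (legitimate since $p\nmid\varphi(N)$, so the group cohomology of $\Gal(\Q(\mu_N)/\Q)$ with $\cO_{\fp}$-coefficients vanishes in positive degrees, and since $H^0(\Q(\mu_N),\cO_{\fp}(1))=0$) together with Kummer theory identifies $H^1_{\fin}(\Q,\cO_{\fp}(\theta)(1))$ with the $\bar\theta$-isotypic component $\Z[\mu_N]^\times[\bar\theta]$ of the global units occurring in \eqref{units-iso}: indeed the crystalline condition at $p$ cuts the classes unramified outside $p$ down exactly to the image of the global units, the a priori contribution of the class group disappearing upon passage to $\cO_{\fp}(1)$-coefficients (the Tate module of the finite class group is zero). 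On the other side $H^1(\Q_p,\cO_{\fp}(\theta)(1))$ is identified with $U_p(N)[\bar\theta]$ precisely as in the proof of Lemma \ref{l-theta-iso}. Both $\Z[\mu_N]^\times[\bar\theta]$ (here one uses $\theta\neq 1$ and $p\nmid 6N$, so that $\Z[\mu_N]^\times\otimes\cO_{\fp}$ is $\cO_{\fp}$-torsion free) and $U_p(N)[\bar\theta]$ are free $\cO_{\fp}$-modules of rank one, and under these identifications the global-to-local map becomes the natural localization of global units into local units above $p$.

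\emph{Step 2: injectivity.} Injectivity of this localization map is Leopoldt's conjecture for $k=\Q(\mu_N)^+$, which holds because $k/\Q$ is abelian (Brumer). The map is therefore an injection of free rank-one $\cO_{\fp}$-modules, its cokernel is a cyclic $\cO_{\fp}$-module, and it is an isomorphism if and only if this cokernel vanishes; by Nakayama that is in turn equivalent to the cokernel being trivial modulo $\fp$.

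\emph{Step 3: the cokernel, via class field theory and Spiegelung.} The image of $\Z[\mu_N]^\times[\bar\theta]$ in $U_p(N)[\bar\theta]$ is a finitely generated $\cO_{\fp}$-submodule, hence closed and equal to its own closure; class field theory then identifies the cokernel with $\Gal(H_p/H)[\bar\theta]$ through the exact sequence
\[ 0\longrightarrow U_p(N)[\bar\theta]\big/\overline{\Z[\mu_N]^\times[\bar\theta]}\longrightarrow \Gal(H_p/k)[\bar\theta]\longrightarrow \Gal(H/k)[\bar\theta]\longrightarrow 0, \]
where $H_p$ and $H$ are as in the footnote to \eqref{Gras-hyp} and $\Gal(H/k)[\bar\theta]=\Cl(k)[\bar\theta]\otimes\cO_{\fp}$. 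It remains to prove $\Gal(H_p/H)[\bar\theta]=0$ under $\bar\theta$-regularity, and this is exactly where \eqref{Gras-hyp} and Gras's Th\'eor\`eme~I2 enter. Gras's theorem computes the $\Z/p$-rank of (the $\bar\theta$-part of) the $p$-torsion of $\Gal(H_p/k)$ in terms of $\rank_{\Z/p}\Cl(k(\mu_p))[\bar\theta\omega]\otimes\Z/p$, while $\bar\theta$-regularity asserts the equality of this quantity with $\rank_{\Z/p}\Cl(k)[\bar\theta]\otimes\Z/p$; combining these with the displayed exact sequence forces the surjection $\Gal(H_p/k)[\bar\theta]\twoheadrightarrow\Gal(H/k)[\bar\theta]$ to be an isomorphism, i.e.\ $\Gal(H_p/H)[\bar\theta]=0$, whence $\mathrm{coker}=0$ and the map is an isomorphism. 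The main obstacle is making this last deduction airtight: one must keep careful track of the various finite $\cO_{\fp}$-modules involved and of the $\mathrm{Tor}$-terms that appear on reduction modulo $\fp$, so as to see that the one-sided inequality \eqref{Gras-hyp} becoming an equality genuinely forces $\Gal(H_p/H)[\bar\theta]$ to vanish rather than merely to be $\fp$-divisible inside $\Gal(H_p/k)[\bar\theta]$ — this is precisely the content of Leopoldt's Spiegelungssatz in the sharp form recalled before the lemma. The remaining ingredients — Kummer theory, Brumer's theorem, and the class field theory exact sequence — are standard.
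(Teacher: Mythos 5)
Your proof takes essentially the same route as the paper's: identify both sides with $\Z[\mu_N]^\times[\bar\theta]$ and $U_p(N)[\bar\theta]$ via Kummer theory, observe that both are free of rank one over $\cO_{\fp}$, pass to the cokernel, identify it via class field theory (Neukirch, Ch.\,4 Thm.\,7.8 in the paper's citation, which is exactly your displayed exact sequence) with $\Gal(H_p/H)[\bar\theta]$, and appeal to Gras together with the $\bar\theta$-regularity hypothesis. You are somewhat more explicit than the paper in invoking Brumer for injectivity and in flagging the delicacy of the final deduction from equality in \eqref{Gras-hyp}, a point the paper itself dispatches with ``The lemma hence follows from the running hypothesis''; your reading of that step matches the paper's intent.
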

\begin{proof}
Recall from the proof of Lemma \ref{l-theta-iso} the definition of the group $U_p(N)$ of local units. Consider the following commutative diagram, where vertical arrows are isomorphisms induced from Kummer theory and the upper horizontal arrow stands for the map corresponding to localization at $p$:
\[\xymatrix{
		H_{\fin}^1(\mathbb Q, \mathcal O_{\mathfrak p}(\theta)(1)) \ar[r]& H^1(\mathbb Q_p,\mathcal O_{\mathfrak p}(\theta)(1)) \\
		\mathbb Z[\mu_N]^{\times}[\bar \theta] \otimes \mathcal O_{\mathfrak p} \ar[r]\ar[u]& U_p(N)[\bar \theta] \otimes \mathcal O_{\mathfrak p} \ar[u]
	}  \]
The bottom horizontal arrow is injective because it is induced by the natural inclusion $\mathbb Z[\mu_N]^{\times} \hookrightarrow U_p(N)$. Moreover, since $\theta$ is even and nontrivial, both $\mathbb Z[\mu_N]^{\times}[\bar \theta] \otimes \mathcal O_{\mathfrak p}$ and $U_p(N)[\bar \theta]$ are $\mathcal O_{\mathfrak p}$-modules of rank $1$. The cokernel $$Q[\bar\theta]=U_p(N)[\bar \theta] \otimes \mathcal O_{\mathfrak p}/\mathbb Z[\mu_N]^{\times}[\bar \theta] \otimes \mathcal O_{\mathfrak p}$$ is thus a finite group.

In order to prove the lemma it thus suffices to show that $Q[\bar\theta]$ is trivial. Write $k=\mathbb Q(\mu_N)^+$ (resp. $\mathbb Z[\mu_N]^+$) for the maximal totally real subfield of $\mathbb Q(\mu_N$) (resp. its ring of integers). Let $U_p^1(N) = \prod_{v|p} (\mathbb Z[\mu_N]_v^+)^1$, where $(\mathbb Z[\mu_N]_v^+)^1$ stands for the set of local units in $\mathbb Z_p[\mu_N]_v^+$ which are congruent to 1 modulo $v$. Let $U^+(N)$ denote the closure of the set of units of $\mathbb Z[\mu_N]^+$ congruent to 1 modulo each place above $p$, diagonally embedded in $U_p^1(N)$. Note that $Q[\bar\theta]=U_p^1(N)/U^+(N)$.

According to \cite[Chapter 4, Theorem 7.8]{Neu}, $Q[\bar\theta] \simeq \Gal(H_p/H)[\bar\theta]$, where $H_p$ (resp. $H$) is the maximal $p$-abelian extension of $k$ unramified away from primes above $p$ (resp. everywhere unramified). Here the $\theta$-eigencomponent on the Galois group is taken with respect to the natural action of $\Gal(k/\Q)$ by conjugation on $\Gal(H_p/H)$. The lemma hence follows from the running hypothesis.
\end{proof}

Assuming {\em $\bar\theta$-regularity}, Lemma \ref{global-local} allows us to upgrade \eqref{quasi} to an equality of global classes in $H^1(\mathbb Q, \mathcal O/\mathfrak p^t(\theta)(1))$, namely
\[ \mathcal E_f \cdot  \kappa_{f,1} \equiv \ell \cdot  c_{\theta} \pmod{\mathfrak p^t}. \] Theorem \ref{unteo} follows.

\section{Second congruence relation}\label{second-section}

As in the introduction, let $N> 1$ be a positive integer and $\theta: (\Z/N\Z)^\times \ra \bar\Q^\times$ an even Dirichlet character.  Let $f \in S_2(N,\theta)$ a normalized cuspidal eigenform of level $N$, weight $2$ and nebentype $\theta$. Fix a prime $p\nmid 6N \varphi(N)$ and assume as in \eqref{congruence} that $f \equiv E_2(\theta,1) \,\, \mathrm{mod} \, \mathfrak p^t$ for some $t \geq 1$. This implies that $L_p(\bar \theta,-1) \equiv 0 \pmod{\fp^t}$.

We keep the notations introduced along \S \ref{sec:back} and \S \ref{first-section}. Recall that the value of the modular unit $u_{\chi_1,\chi_2}$ at $\infty$ is some power of the circular unit $c_{\chi_1}$, and likewise for $u_{\xi_1,\xi_2}$. For the sake of concreteness, in the statement below we normalize them so that $u_{\chi_1,\chi_2}(\infty) = c_{\chi_1}$ and  $u_{\xi_1,\xi_2}(\infty) = c_{\xi_1}$ although any other normalization would work.

\begin{propo}\label{lift}
The Beilinson--Kato class $\kappa_f \in H^1(\Q, T_{f,Y}(1))$ may be lifted to a class in $H^1(\Q, T_{f,X}(1))$ if and only if $\bar \kappa_{f,1} = 0$.
\end{propo}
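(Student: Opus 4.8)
The plan is to deduce this from the short exact sequence of Proposition~\ref{ses-y} via the associated long exact sequence in Galois cohomology. Twisting
$$
0 \ra T_{f,X} \lra T_{f,Y} \stackrel{\pi_1}{\lra} \mathcal O/\mathfrak p^t(\theta) \ra 0
$$
by $\mathcal O_{\mathfrak p}(1)$ and taking $G_{\Q}$-cohomology yields an exact sequence
$$
H^1(\Q, T_{f,X}(1)) \lra H^1(\Q, T_{f,Y}(1)) \stackrel{\pi_{1*}}{\lra} H^1(\Q, \mathcal O/\mathfrak p^t(\theta)(1)) \stackrel{\partial}{\lra} H^2(\Q, T_{f,X}(1)).
$$
By the very definition $\bar\kappa_{f,1} = \pi_{1*}(\kappa_f)$, so $\kappa_f$ lifts to $H^1(\Q, T_{f,X}(1))$ if and only if it lies in the image of the first map, which by exactness happens if and only if $\pi_{1*}(\kappa_f) = \bar\kappa_{f,1}$ vanishes. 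That is exactly the claimed equivalence, so once the connecting map is set up correctly the statement is essentially immediate.

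The one point that requires care, and which I expect to be the main obstacle, is making sure no torsion ambiguity creeps in when identifying $\bar\kappa_{f,1}$ with $\pi_{1*}(\kappa_f)$: recall that the class $\bar\kappa_{f,1}$ as defined in the introduction is $\bar\pi_{1*}(\bar\kappa_f)$, the pushforward of the mod $\mathfrak p^t$ reduction, whereas the long exact sequence above naturally produces $\pi_{1*}(\kappa_f)$, the pushforward of the integral class. These agree because $\pi_1$ already takes values in $\mathcal O/\mathfrak p^t(\theta)$, so $\pi_{1*}$ factors through reduction mod $\mathfrak p^t$ on $T_{f,Y}(1)$; this compatibility is recorded in the sentence preceding Theorem~\ref{unteo} (``$\bar \kappa_{f,1} := \pi_{1*}( \kappa_{f})  = \bar\pi_{1*}( \bar\kappa_{f})$''), so it may simply be cited.

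For the uniqueness of the lift, I would invoke injectivity of the map $H^1(\Q, T_{f,X}(1)) \ra H^1(\Q, T_{f,Y}(1))$: its kernel is the image of $H^0(\Q, \mathcal O/\mathfrak p^t(\theta)(1))$, which vanishes because $\theta(1)$ is a nontrivial character of $G_{\Q}$ — indeed (H1) forces $\theta$ to be nontrivial even mod $\mathfrak p$, so $H^0(\Q, \mathcal O/\mathfrak p^t(\theta)(1)) = 0$. Hence whenever $\kappa_f$ lifts, the lift is unique, which justifies the abuse of notation announced in the introduction. Writing all this up is short; the only thing to be vigilant about is citing the correct exact sequence (Proposition~\ref{ses-y}) and the identification $\bar\pi_{1*}\bar\kappa_f = \pi_{1*}\kappa_f$ rather than re-deriving them.
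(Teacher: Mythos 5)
Your proof is correct and follows essentially the same route as the paper: twist the short exact sequence of Proposition~\ref{ses-y} by $\cO_{\mathfrak p}(1)$, pass to the long exact sequence in cohomology, and read off the equivalence from exactness, using that $\pi_{1*}\kappa_f=\bar\pi_{1*}\bar\kappa_f$. Your additional remark on uniqueness is also what the paper silently encodes by starting its exact sequence with a $0$; the only thing I'd tighten is the justification that $H^0(\Q,\cO/\mathfrak p^t(\theta)(1))=0$, which is most cleanly seen by noting that $\varepsilon_{\cyc}$ is ramified at $p$ (and nontrivial mod $p$ since $p>3$) while $\theta$ is unramified there, so $\theta\varepsilon_{\cyc}$ cannot be trivial mod $\mathfrak p^t$.
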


\begin{proof}
Recall from Proposition \ref{ses-y} the short exact sequence of $G_{\Q}$-modules
\begin{equation}\label{ret-xyc}
0 \longrightarrow T_{f,X} \longrightarrow T_{f,Y} \longrightarrow \mathcal O/\mathfrak p^t(\theta) \longrightarrow 0.
\end{equation}
%where the last map is afforded by identifying $T_{f,Y}/T_{f,X}$ where the maximal quotient of $\Div^0[C_N]$ where the action is via the system of eigenvalues of $E_2(\theta,1)$. Following \cite[Lemma 3.3.5]{FK},
After taking a Tate twist, it gives rise to the long exact sequence in cohomology
\begin{equation}\label{xyc}
0 \longrightarrow H^1(\mathbb Z[1/Np], T_{f,X}(1)) \longrightarrow H^1(\mathbb Z[1/Np], T_{f,Y}(1)) \longrightarrow H^1(\mathbb Z[1/Np], \mathcal O/\mathfrak p^t(\theta)(1))
\end{equation}
The last map sends $\kappa_f$ to $\bar\kappa_{f,1}$. Hence the latter vanishes if and only if $\kappa_f$ belongs to $H^1(\mathbb Z[1/Np], T_{f,X}(1))$.
%There is also a map \[ H^1(\mathbb Z[1/Np], T_{f,Y}(1)) \longrightarrow H^1(\mathbb Z[1/Np], T_{f,Y}^{\quo}(1)) \longrightarrow H^1(\mathbb Z[1/Np], \mathcal O/\mathfrak p^t(1)), \quad \kappa_f \mapsto \bar{\kappa}_{f,1} \] introduced in Section \ref{Ohta}, where $H^1(\mathbb Z[1/Np],M) \subset H^1(\mathbb Q,M)$ stand for the set of classes unramified at primes dividing $Np$. This map agrees with the rightmost map of \eqref{xyc}, once we fix a generator of $E_2(\theta,1)$ in $\Div^0[\cusps]$.
%Then, the image of $\kappa_f$ in $H^1(\mathbb Z[1/Np], \mathcal O/\mathfrak p^t(1))$ vanishes if and only if $\bar{\kappa}_{f,1}$ is zero, and this is equivalent to the existence of a lifting to $H_{\et}^1(\bar X_1(N),\mathcal O_{\mathfrak p}(1))$.
\end{proof}

Assume for the remainder of this section that $\bar \kappa_{f,1} = 0$ and hence $\kappa_f \in H^1(\Q, T_{f,Y}(1))$ lifts to a class in $H^1(\Q, T_{f,X}(1))$, that by a slight abuse of notation we continue to denote with the same symbol.

%In this case, let $\kappa_{f,X} \in H^1(\mathbb Q, T_{f,X}(1))$ mapping to $\kappa_f$ under the natural map \[ H_{\et}^1(\bar X_1(N), \mathcal O_{\mathfrak p}(1)) \longrightarrow H_{\et}^1(\bar Y_1(N), \mathcal O_{\mathfrak p}(1)). \]

As in the introduction, we may define the global class
$$
\bar \kappa_{f,2} = \bar\pi_{2*}(\bar\kappa_{f}) \in H^1(\mathbb Q, \mathcal O/\mathfrak p^t(2)).
$$

\begin{theorem}\label{dosteo} (Second congruence relation)
Assume $\theta$ is primitive and $L_p'(\bar \theta,-1) \not\equiv 0 \pmod{\fp}$. Then the following equality holds in
$H^1(\mathbb Q, \mathcal O/\mathfrak p^t(2))$:
\[ \bar{\kappa}_{f,2} = \frac{L_p'(\bar \theta,-1)}{1-p^{-1}} \cdot  \frac{\bar c_{\xi_1} \cup \bar c_{\chi_1}}{\cup \log_p(\varepsilon_{\cyc})} \pmod{\mathfrak p^t}. \]
Here, $1/\cup  \log_p(\varepsilon_{\cyc})$ denotes the inverse of the map \[ H^1(\mathbb Q, \mathcal O/\mathfrak p^t(2)) \rightarrow H^2(\mathbb Q, \mathcal O/\mathfrak p^t(2)), \quad \kappa \mapsto \kappa \cup \log_p(\varepsilon_{\cyc}), \] which is invertible under our running assumptions.

\end{theorem}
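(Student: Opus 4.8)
\emph{Plan of proof.} The argument follows the template of Theorem \ref{unteo}, working now with the complete curve $X$ and with the cyclotomic line $T_{f,X}^{\sub}$ of \eqref{filtracio} in place of the unramified quotient. First one records the local picture. Since $\bar\kappa_{f,1}=0$, Proposition \ref{lift} furnishes the lift $\kappa_f\in H^1(\Q,T_{f,X}(1))$, and this lift is crystalline at $p$: its bottom layer $\kappa_f(1)=\mathcal E_f\cdot\kappa_f$ is crystalline by Theorem \ref{kato-law}(b), while $\mathcal E_f$ is a $\mathfrak p$-adic unit under (H1) together with $(\alpha_f,\beta_f)\equiv(\theta(p),p)$, exactly as noted after Theorem \ref{unteo}. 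As $f$ is ordinary, $\res_p(\kappa_f)$ therefore lies in the image of $H^1(\Q_p,T_{f,X}^{\sub}(1))$, to which it has a unique preimage $\lambda$ since $H^0(\Q_p,T_{f,X}^{\quo}(1))=0$. Under the isomorphism of $G_{\Q_p}$-modules $\bar T_{f,X}^{\sub}(1)\simeq\mathcal O/\mathfrak p^t(2)$ provided by \eqref{filtracio}(ii), \eqref{isos} and the canonical rigidification of $\bar\pi_2$ (cf.\ \eqref{rig-iso}), the reduction $\bar\lambda$ of $\lambda$ is precisely $\res_p(\bar\kappa_{f,2})$, because the composite $\bar T_{f,X}^{\sub}\hookrightarrow\bar T_{f,X}\xrightarrow{\bar\pi_2}\bar T_{f,X}^-$ is the second isomorphism of \eqref{isos}. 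Using $p\nmid 6N\varphi(N)$, a Poitou--Tate computation then reduces the asserted global identity to the identity, after cup product with $\log_p(\varepsilon_{\cyc})$, in $H^2(\Q,\mathcal O/\mathfrak p^t(2))$ --- which is meaningful precisely because, by the Iwasawa main conjecture for $\Q$, the hypothesis that $L_p'(\bar\theta,-1)$ be a $p$-adic unit is equivalent to $\kappa\mapsto\kappa\cup\log_p(\varepsilon_{\cyc})$ being invertible on $H^1(\Q,\mathcal O/\mathfrak p^t(2))$.

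For the $p$-adic $L$-value one passes to the $\Lambda$-adic class $\kappa_{f,\infty}$ of Theorem \ref{kato-law}. By Theorem \ref{kato-law}(a), Proposition \ref{fact-l} and Corollary \ref{qualsevol}, the image of $\res_p(\kappa_{f,\infty})$ under the Perrin--Riou map $\mathcal L_f^-$ is congruent modulo $\mathfrak p^t$ to an explicit unit times $L_p(f^*,\bar\chi_1\chi_2,1)\cdot\zeta_p(-s)\cdot L_p(\bar\theta,s)$; the hypothesis $\bar\kappa_{f,1}=0$, combined with the fact (Lemmas \ref{l-theta-iso}, \ref{pr-iguals}) that $\mathcal L_{f,1}^-$ is an isomorphism modulo $\mathfrak p^t$, forces the vanishing of this function at $s=1$. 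Because the congruence $f\equiv E_2(\theta,1)$ gives $L_p(\bar\theta,-1)\equiv 0\pmod{\mathfrak p^t}$, and because $\zeta_p(-s)$ has a simple pole at $s=-1$ with residue $1-p^{-1}$, the product $\zeta_p(-s)\,L_p(\bar\theta,s)$ is finite and nonzero at $s=-1$, its value there involving exactly $L_p'(\bar\theta,-1)$ and the residue $1-p^{-1}$; this is the origin of the factor $L_p'(\bar\theta,-1)/(1-p^{-1})$ and the reason the hypothesis that $L_p'(\bar\theta,-1)$ be a unit is imposed.

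The decisive input is the identification of the cup product $\bar c_{\xi_1}\cup\bar c_{\chi_1}$. Here one uses that $\kappa_f=\pi_{f*}\bigl(\delta(u_{\xi_1,\xi_2})\cup\delta(u_{\chi_1,\chi_2})\bigr)$, with the modular units normalised so that $u_{\xi_1,\xi_2}(\infty)=c_{\xi_1}$ and $u_{\chi_1,\chi_2}(\infty)=c_{\chi_1}$, together with Ohta's \cite{Oh1}, Sharifi's \cite{Sh} and Fukaya--Kato's \cite{FK} description of the Eisenstein part of $H^1_{\et}(\overline{Y}_1(N),\mathcal O_{\mathfrak p}(1))$: modulo $\mathfrak p^t$ the composite of $\pi_{f*}$ with the rigidified $\bar\pi_{2*}$ factors through the mod-$\mathfrak p^t$ Sharifi map, and carries the cup product of two Siegel units to the cup product of the associated circular units --- the surviving contribution coming from the interplay of the Hochschild--Serre filtration on $H^2_{\et}(Y_1(N),\mathcal O_{\mathfrak p}(2))$ with the non-split Eisenstein extension $\bar T_{f,X}$ of \eqref{VfX} --- up to the congruence-module period $\varpi^r$, which is absorbed by our normalisation \eqref{prod-peri} of $\Omega_f^{\pm}$ exactly as in Theorem \ref{unteo}, and up to the $p$-adic $L$-factor of the preceding paragraph. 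The passage from the $\Lambda$-adic identity to its bottom layer divides by the cyclotomic variable $\underline{\varepsilon}_{\cyc}-1$, which under these identifications is the inverse of $\kappa\mapsto\kappa\cup\log_p(\varepsilon_{\cyc})$ and accounts for the shape of the formula. Collecting the constants from \eqref{interpolation}, \eqref{Eulerf} and $\zeta_p(-1)=(p-1)/12$, the Euler factors collapsing modulo $\mathfrak p^t$ thanks to $(\alpha_f,\beta_f)\equiv(\theta(p),p)$ and, since $\theta$ is primitive of conductor $N$, the Gauss sums $\mathfrak g(\theta)$ and $\mathfrak g(\theta\chi_1\bar\chi_2)$ being genuine, yields the coefficient $L_p'(\bar\theta,-1)/(1-p^{-1})$. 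The main obstacle is precisely this last compatibility: matching the rigidified projection $\bar\pi_2$ and the $f$-isotypic localisation with the Ohta/Sharifi/Fukaya--Kato formalism, which requires keeping careful track both of the Hochschild--Serre filtration on $H^2_{\et}(Y_1(N),\mathcal O_{\mathfrak p}(2))$ and of the twisting cocycle distinguishing our model of $X_1(N)$ from that of \cite{FK}.
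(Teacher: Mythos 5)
Your proposal correctly identifies the ambient formalism (Ohta, Sharifi, Fukaya--Kato, the congruence between $f$ and $E_2(\theta,1)$, the role of the derivative of the Kubota--Leopoldt $p$-adic $L$-function) and you correctly state that $\bar{\kappa}_{f,2}$, pushed forward through the Eisenstein quotient, should be compared with the cup product $\bar c_{\xi_1}\cup\bar c_{\chi_1}$ via evaluation at $\infty$. This is the content of Proposition~\ref{aux-0}, which the paper takes from \cite[\S 9.2]{FK}.

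However, there is a genuine gap in the way you try to produce the factor $L_p'(\bar\theta,-1)/(1-p^{-1})$. You attempt to extract it from Theorem~\ref{kato-law}(a), Proposition~\ref{fact-l} and Corollary~\ref{qualsevol}, combined with the pole of $\zeta_p(-s)$ at $s=-1$. This cannot work as stated: the Perrin--Riou map $\mathcal L_f^-$ of Theorem~\ref{kato-law} is attached to the \emph{unramified quotient} $T_{f,\circ}^{\quo}$, and therefore controls the projection $\bar\kappa_{f,1}$, not $\bar\kappa_{f,2}$, which lives in (a twist of) the cyclotomic \emph{sub}-line $T_{f,X}^{\sub}$. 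Specializing Corollary~\ref{qualsevol} at any weight still only yields information about $\bar\kappa_{f,1}$, and the heuristic about $\zeta_p(-s)\,L_p(\bar\theta,s)$ being ``finite and nonzero at $s=-1$'' is not a derivation of the coefficient. The actual source of the factor is the Fukaya--Kato identity
\[ \cup\,(1-p^{-1})\log_p(\varepsilon_{\cyc}) \,=\, L_p'(\bar\theta)\cdot\bigl(\FK_2\circ\FK_1\bigr) \]
on $H^1(\Z[1/Np],\mathcal Q)$ (\cite[Prop.~9.3.1]{FK}, quoted in the paper as Proposition~\ref{aux-1}); together with the identification $\FK_1(\bar\kappa_{f,2})=\bar c_{\xi_1}\cup\bar c_{\chi_1}$ and the fact that $\FK_2$ specialized at $r=1$ becomes the identity because $L_p(\bar\theta,-1)\equiv 0\pmod{\mathfrak p^t}$, this gives the formula directly, and also the invertibility asserted in Corollary~\ref{invertible}. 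Your proposal never states or uses this key relation; it is precisely the compatibility your third paragraph flags as ``the main obstacle'' and then leaves unresolved. Relatedly, your assertion that the invertibility of $\kappa\mapsto\kappa\cup\log_p(\varepsilon_{\cyc})$ follows ``by the Iwasawa main conjecture for $\Q$'' is not how the paper argues (and would need a separate argument); and the ``Poitou--Tate computation'' you invoke plays no role in the actual proof.
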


\subsection{Cohomology and Eisenstein quotients}

For any $r \geq 1$ and $j\in \Z$
%and consider the compactified modular curve $X_1(Np^r)$;
let $$H_r(j) = H_{\et}^1(\bar{X}_1(Np^r),\mathcal O_{\mathfrak p}(j))^{\ord}$$ denote the ordinary component of the \'etale cohomology group $H_{\et}^1(\bar{X}_1(Np^r),\mathcal O_{\mathfrak p}(j))$ with respect to the Hecke operator $U_p$.
%\footnote{V a O: Posaves $T_p$, suposo perque FK usen $T_p$, pero com $o$ esta en el nivell es mes standard usar el simbol $U_p$. Estas d'acord o se m'escapa alguna subtilitat notacional?}
This is naturally an $\cO_{\fp}[G_{\Q}]$-module and we may simply denote it $H_r$ when the Galois action is understood or irrelevant. Let $\mathfrak h_r$ be the subring of $\End_{\mathcal O_{\mathfrak p}}(H_r)$ spanned over $\mathcal O_{\mathfrak p}$ by the Hecke operators $T_n$, $(n,N)=1$. The Eisenstein ideal $I_r = I_{\Eis,r} \subset \mathfrak h_r$ is the $\cO_{\fp}$-submodule of $\mathfrak h_r$ generated by $U_p-1$ and $T_{\ell}-\ell \langle \ell \rangle-1$, for primes $\ell \nmid Np$; here, $\langle \ell \rangle$ stands for the usual diamond operator.

Passing to the projective limit we may define:
\[ H(j) := H_{\et}^1(\bar{X}_1(Np^{\infty}),\mathcal O_{\mathfrak p}(j))^\ord =  \lim_{\leftarrow} H_r(j), \quad \mathfrak h = \lim_{\leftarrow} \mathfrak h_r, \quad I = \lim_{\leftarrow} I_{r} \subset \mathfrak h. \]

%The quotients $H/IH$ and $\mathfrak h/I \mathfrak h$ are finitely generated $\mathcal O_{\mathfrak p}$-modules.
 The ideal $I$ is a height one ideal contained in the maximal ideal $\fM=(I,\fp)$; for any $t\geq 1$ we shall denote $\fM^{(t)}=(I,\fp^t)$, so that $\fM=\fM^{(1)}$. The ideal $I$ is the intersection of a finite number of height one prime ideals $P  \subset \fM$, each of which corresponds to a weight two eigenform that is congruent to an Eisenstein series mod $\fp$, like the  modular form $f$ of the introduction.

Let $\Lambda_N := \lim_{\leftarrow} \mathcal O_{\mathfrak p}[(\mathbb Z/Np^r \mathbb Z)^{\times}]$ denote the Iwasawa algebra of tame level $N$. Any Dirichlet character $\psi: (\Z/Np\Z)^\times \ra \cO_{\mathfrak{p}}^\times$ may be extended  by linearity to yield a homomorphism
\[ \psi: \Lambda_N \longrightarrow \mathcal O_{\mathfrak p}[(\mathbb Z/Np\mathbb Z)^{\times}] \lra  \mathcal O_{\mathfrak p} \]
that we continue to denote with the same symbol.

For any $\Lambda_N$-module $M$ let $M_{\psi} = M \otimes_{\Lambda_N, \psi} \cO_{\mathfrak{p}}$ stand for the associated $\psi$-isotypical component. Note that $\Lambda_N = \oplus \Lambda_{N,\psi}$ where $\psi$ ranges over all characters of $(\Z/Np\Z)^\times$ and $\Lambda_{N,\psi} \simeq \mathcal O_{\mathfrak p}[[1+p\Z_p]]$.

 %The algebra $\Lambda_\theta$ may be characterized as the single factor in the above decomposition of $\Lambda_N$ mapping surjectively onto $\mathcal O_{\mathfrak p}$ under $\theta$. For a $\Lambda_N$-module $M$, we denote by %$M_{\theta} := M\otimes_{\Lambda_N} \Lambda_\theta$ its $\theta$-component.

%&&&& Consider the projection map coming from taking the $\theta$-component in $\Lambda_N = \oplus \Lambda_{\theta}$
%\[ \spe: \Lambda_N \rightarrow \Lambda_{\theta}. \] We denote this map as $\spe$ (for specialization), and also with a slight abuse of notation we use the same notation for the induced map at the different cohomology groups involved in our picture.
%&&&&

%One of the biggest conjectures of Sharifi deals with two maps, \[ \varpi: H_{\theta}^-/I_{\theta} H_{\theta}^- \rightarrow X_{Np^{\infty},\theta}, \quad \Upsilon: X_{Np^{\infty},\theta} \rightarrow H_{\theta}^-/I_{\theta}H_{\theta}^-, \] with $X_{Np^{\infty}} = \lim_{\leftarrow} \Cl(\mathbb Q(\zeta_{p^r}))\{p\}$ the inverse limit of the $p$-power parts of the ideal class groups. Conjecturally, they are isomorphisms and one is the inverse of the other. Fukaya and Kato proved the conjecture under certain mild assumptions (for example, under the assumption that the $p$-adic zeta function does not have multiple zeros). We review some of their methods and apply them to prove Theorem \ref{dosteo}.

We begin by rephrasing the results of \cite[\S9]{FK} on Sharifi's conjecture in a convenient way for our purposes. As we have seen at the beginning of \S \ref{first-section}, and more precisely in the discussion before equation \eqref{HS-iso}, the Hochschild--Serre spectral sequence in \'etale cohomology yields the commutative diagram of $\Lambda_{N,\theta}[G_\Q]$-modules:

\begin{equation}\label{diag1}
\xymatrix{
		H_{\et}^2(X_1(Np^{\infty}),\mathcal O_{\mathfrak p}(2))^{\ord}_{\theta} \ar[r]\ar[d] & H_{\et}^2(X_1(Np),\mathcal O_{\mathfrak p}(2))^{\ord}_{\theta} \ar[d] \\
		H^1(\mathbb Q,H(2)_{\theta}) \ar[r] & H^1(\mathbb Q, H_{\et}^1(\bar{X}_1(Np),\mathcal O_{\mathfrak p}(2))_{\theta}^{\ord}).
	}
\end{equation}

The module $H(2)$ is endowed with an action of complex conjugation, yielding the decomposition $H(2) = H(2)^+ \oplus H(2)^-$; in the sequel we shall employ a similar notation for any $\cO_{\fp}$-module acted on by complex conjugation. It follows from \cite[Prop. 6.3.2]{FK} that the quotient
\begin{equation}\label{H+}
(H(2)/\fM^{(t)})_\theta^+
\end{equation}  is still endowed with a compatible action of $G_{\Q}$.

Our running assumptions imply that the (mod $\mathfrak{p}^t$) Galois representation $\bar{T}_{f,X}(1) = T_{f,X}(1) \otimes \mathcal O/\mathfrak p^t$ arises as a quotient of $H(2)/\fM^{(t)}$ as $\mathcal O/\mathfrak p^t[G_{\Q}]$-modules. Since the nebentype of $f$ is $\theta$, it belongs to the $\theta$-isotypical component of the latter. Denote $$\pi_f: (H(2)/\fM^{(t)})_\theta^+ \lra \bar{T}_{f,X}(1)^+$$ the resulting projection.

Recall also from \eqref{filtracio} that there is a short exact sequence for $\bar T_{f,X}$, which according to \eqref{ses-fk0} and \eqref{isos} is compatible with the action of complex conjugation.
In particular, $(\bar{T}_{f,X}^-)(1) = (\bar{T}_{f,X}(1))^+ \simeq \mathcal O/\mathfrak p^t(2)$ and this is naturally a quotient of \eqref{H+}. Henceforth we fix the canonical isomorphism provided by \cite[6.3.18 and 7.1.11]{FK} in order to identify
\begin{equation}\label{rig-iso2}
\bar{T}_{f,X}(1)^+ = \mathcal O/\mathfrak p^t(2)
\end{equation}
as $\mathcal O/\mathfrak p^t[G_{\Q}]$-modules.

Summing up there is a  commutative diagram of $G_\Q$-modules, where the horizontal arrows arise from specializing to $r=1$, i.e.\,level $Np$:

\begin{equation}\label{diag2}
\xymatrix{
		H^1(\mathbb Q,H(2)_{\theta}) \ar[r]\ar[d] & H^1(\mathbb Q, H_{\et}^1(\bar{X}_1(Np),\mathcal O_{\mathfrak p}(2))_{\theta}^{\ord}) \ar[d] \\
		H^1(\mathbb Q,(H(2)/\fM^{(t)})_{\theta}^+) \ar[r]\ar[dr] & H^1(\mathbb Q, (H_{\et}^1(\bar{X}_1(Np),\mathcal O_{\mathfrak p}(2))^{\ord}/(I_1,\fp^t))_{\theta}^+) \ar[d] \\
		 &  H^1(\mathbb Q,\bar{T}_{f,X}(1)^+) = H^1(\mathbb Q,\mathcal O/\mathfrak p^t(2))
	}
\end{equation}

\subsection{Fukaya--Kato maps}

Define the module $\mathcal Q$ as in \cite[\S6.3.1]{FK}, with a twist by the square of the cyclotomic character, namely $$\mathcal Q := (H(2)/I H(2))_{\theta}^+.$$

Recall from \S \ref{first-section} (cf.\,e.g.\,Prop.\,\ref{Leop}) the Kubota--Leopoldt $p$-adic $L$-function $L_p(\bar \theta) \in \Lambda_{N,\theta}$ attached to the Dirichlet character $\bar \theta$.
As shown in \cite[\S6.1.7]{FK}, there is an isomorphism of Galois modules
$$(\mathfrak h/I)_{\theta} \simeq \Lambda_{N,\theta}/(L_p(\bar \theta))(\underline{\varepsilon}_{\cyc}),$$ where $\underline{\varepsilon}_{\cyc}$ is the $\Lambda$-adic cyclotomic character introduced in \eqref{lambda-eps-cyc} and the identification follows from \cite[\S6.3]{FK}, with the conventions recalled in \S9.1.2 and 9.1.4.
Moreover, and as a consequence of the proof of the Iwasawa main conjecture by Mazur and Wiles, in \S6.3.18 of loc.\,cit. the authors show that there are isomorphisms of Galois modules
\begin{equation}\label{isosMW}
\mathcal Q \xrightarrow{\simeq} \Lambda_{N,\theta}/(L_p(\bar \theta))(\underline{\varepsilon}_{\cyc})(2) \simeq (\mathfrak h(2)/I \mathfrak h(2))_{\theta}.
\end{equation}

Recall that $H^i(\Z[1/Np],M) \subset H^i(\mathbb Q,M)$ stands for the set of classes which are unramified at primes dividing $Np$. Shapiro's lemma gives an isomorphism
\[
 \lim_{\leftarrow} H^2(\Z[1/Np,\zeta_{Np^r}],\mathcal O_{\mathfrak p}(2)) \simeq H^2(\Z[1/Np], \Lambda_N(\underline{\varepsilon}_{\cyc})(2)).
 \]
As a piece of notation, and following the definition of \cite[\S5.2.6]{FK}, set \[ \mathcal S = \lim_{\leftarrow} H^2(\Z[1/Np, \zeta_{Np^r}],\mathcal O_{\mathfrak p}(2))^+  \simeq H^2(\Z[1/Np], \Lambda_N(\underline{\varepsilon}_{\cyc})(2))^+. \]
%where again the plus sign stands for the $(+1)$-eigenspace under the action of complex conjugation.

In \cite[\S9.1]{FK} Fukaya and Kato established the existence of isomorphisms
\[
 \FK_1 : H^1(\Z[1/Np], \mathcal Q) \simeq \mathcal S_{\theta}, \quad \FK_2: \mathcal S_{\theta} \simeq H^2(\Z[1/Np], \mathcal Q)
 \]
arising from  the long exact sequence in cohomology induced by the short exact sequence
\begin{equation*}
0 \rightarrow \Lambda_{N,\theta}(\underline{\varepsilon}_{\cyc})(2) \xrightarrow{\cdot L_p(\bar \theta)} \Lambda_{N,\theta}(\underline{\varepsilon}_{\cyc})(2) \rightarrow \mathcal Q \rightarrow 0.
\end{equation*}
stemming from \eqref{isosMW}.

In particular, the map we have denoted as $\FK_2$ is just  the $+$-component of the homomorphism
\begin{equation}\label{FK2-exp}
H^2(\Z[1/Np], \Lambda_{N,\theta}(\underline{\varepsilon}_{\cyc}) (2)) \longrightarrow H^2(\Z[1/Np], \Lambda_{N,\theta}/(L_p(\bar \theta))(\underline{\varepsilon}_{\cyc})(2))
\end{equation}
induced by (a twist of) the natural projection $\Lambda_{N,\theta} \lra \Lambda_{N,\theta}/L_p(\bar \theta)$.  For an explicit description of $\FK_1$, see \cite[\S9.1]{FK}.

The main result of \S9.2 of loc.\,cit. asserts that the map
\begin{equation}\label{ev-inf}
\mathrm{ev}_\infty:  H_{\et}^2(X_1(Np^\infty), \mathcal O_{\mathfrak p}(2))_{\theta} \longrightarrow \lim_{\leftarrow} H^2(\Z[1/Np,\zeta_{Np^r}],\mathcal O_{\mathfrak p}(2))_{\theta}
\end{equation}
induced by evaluation at the cusp $\infty$ factors through the Eisenstein quotient, as stated below.

\begin{propo}[Fukaya--Kato]\label{aux-0}
The map $\mathrm{ev}_\infty$ of \eqref{ev-inf} agrees with  the composition \[ H_{\et}^2(X_1(Np^\infty), \mathcal O_{\mathfrak p}(2))_{\theta}  \rightarrow H^1(\Z[1/Np],H(2)_{\theta}) \rightarrow H^1(\Z[1/Np], \mathcal Q) \simeq \mathcal S_{\theta}\]
where:
\begin{itemize}
\item the first map is the composition of $H^2(X_1(Np^\infty), \mathcal O_{\mathfrak p}(2)) \ra H^2(X_1(Np^\infty), \mathcal O_{\mathfrak p}(2))^\ord_\theta$ and the left vertical arrow in \eqref{diag1}, both restricted to the subspace of classes unramified at the primes dividing $Np$;

\item the second map is induced by the projection $H(2)_{\theta} \rightarrow \mathcal Q$;

\item the last isomorphism is $\FK_1$.
\end{itemize}
\end{propo}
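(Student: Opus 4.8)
This is, up to a translation of notation, the main result of \cite[\S 9.2]{FK}; the plan is to recall the geometric meaning of $\mathrm{ev}_\infty$ and then match it term by term with the asserted composition. First I would recall that in the model of \cite[\S 1.3.3]{FK} the cusp $\infty$ of $X_1(Np^r)$ is rational over $\Q(\zeta_{Np^r})$ and extends to a section of the canonical smooth $\Z[1/Np]$-model; specialization of an \'etale cohomology class at this cusp, followed by passage to the inverse limit over $r$, Shapiro's lemma, and projection to the ordinary $\theta$-component, is by definition the map $\mathrm{ev}_\infty$ of \eqref{ev-inf}. The content of the proposition is that this specialization is computed by the Hochschild--Serre edge maps appearing in \eqref{diag1}.

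Second, I would use that the cusp section is compatible with the Hochschild--Serre (Leray) spectral sequences of the modular curve and of $\Spec \Q(\zeta_{Np^r})$: since the geometric cusps form a scheme of dimension $0$, only the row $H^\bullet(\mathbb Q, H^0)$ of the target spectral sequence survives, and a diagram chase shows that, on classes unramified at the primes above $Np$, the edge map $H^2_{\et} \to H^1(\mathbb Q, H^1_{\et}(\bar X_1(Np^r),\mathcal O_{\mathfrak p}(2)))$ of \eqref{diag1} is compatible with specialization at $\infty$, and that the induced map on the $H^1_{\et}$-level is, via the theory of the Tate curve at $\infty$, the residue map of $H^1_{\et}(\bar X_1(Np^r),\mathcal O_{\mathfrak p}(2))$ onto the cohomology of the cusps. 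Passing to the inverse limit over $r$, taking ordinary $\theta$-parts and the relevant $+$-component, this residue map becomes a $\Lambda_{N,\theta}$-linear surjection from $H(2)_\theta$ onto the boundary module.

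Third --- and this is the crux, and the part genuinely due to Fukaya--Kato --- one identifies this residue map with the projection $H(2)_\theta \twoheadrightarrow \mathcal Q$ followed by $\FK_1$. Since the Hecke action on the cohomology of the cusps is Eisenstein, the ideal $I$ annihilates the boundary module, so the residue map factors through $(H(2)/IH(2))_\theta^+ = \mathcal Q$; conversely, using Ohta's and Sharifi's description of the ordinary Eisenstein part of $H(2)$, the $q$-expansions of the relevant Siegel units at $\infty$, and the Mazur--Wiles identification \eqref{isosMW} provided by the Iwasawa main conjecture, Fukaya and Kato show that the resulting surjection $\mathcal Q \to \mathcal S_\theta$ is exactly the isomorphism $\FK_1$ of \cite[\S 9.1]{FK}. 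Assembling these identifications and keeping careful track of the $(2)$-twist, the $\underline{\varepsilon}_{\cyc}$-twist, and the passage to $\pm$-eigenspaces --- whose conventions differ slightly from ours, compare the remark following \eqref{ses-fk0} --- yields the claimed factorization.

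The step I expect to be the main obstacle is the last one: faithfully transcribing Fukaya--Kato's computation that the residue-at-$\infty$ map coincides with $\FK_1$ after the Mazur--Wiles identification, together with the bookkeeping of twists, signs, and $\pm$-conventions. The remaining ingredients are formal manipulations of spectral sequences and of inverse limits of Galois cohomology.
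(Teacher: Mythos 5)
The paper itself offers no proof of this proposition; it is stated as a direct quotation of the main result of \cite[\S 9.2]{FK}, with the surrounding text serving only as a translation of Fukaya--Kato's conventions into those of the present article. Your blind ``proof'' is therefore not competing against an argument in the paper but against a citation, and what you have written is an honest, reasonably accurate reconstruction of what happens in Fukaya--Kato: you correctly identify the three moving parts --- (i) that $\mathrm{ev}_\infty$ is pullback along the cusp section (compatible with Hochschild--Serre via Shapiro), (ii) that the Eisenstein nature of the Hecke action at the cusps forces the factorization through $\mathcal Q$, and (iii) that the identification of the resulting map with $\FK_1$ uses Ohta's description of the ordinary Eisenstein part, the Mazur--Wiles isomorphism \eqref{isosMW}, and the $q$-expansions of Siegel units. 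You also explicitly flag step (iii) as the genuine content due to Fukaya--Kato, which matches the paper's treatment exactly.

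One imprecision is worth pointing out in the middle step: you describe the map induced ``on the $H^1_{\et}$-level'' as a residue map of $H^1_{\et}(\bar X_1(Np^r),\mathcal O_{\mathfrak p}(2))$ onto the cohomology of the cusps, invoking the Tate curve. But pullback along a section is zero on geometric $H^1$; the map $\mathrm{ev}_\infty$ lands in $H^2(\Z[1/Np,\zeta_{Np^r}],\mathcal O_{\mathfrak p}(2))$, not in an $H^1$, and the target $\mathcal S_\theta$ is identified with $H^1(\Z[1/Np],\mathcal Q)$ only through the connecting homomorphism $\FK_1$ coming from the short exact sequence $0\to\Lambda_{N,\theta}(\underline{\varepsilon}_{\cyc})(2)\to\Lambda_{N,\theta}(\underline{\varepsilon}_{\cyc})(2)\to\mathcal Q\to 0$. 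So the degree bookkeeping is not ``residue map on $H^1$'' in the naive localization-sequence sense; the shift in cohomological degree is carried by the boundary map $\FK_1$, not by a residue on the modular-curve side. This is precisely the part of \cite[\S 9.2]{FK} that requires care, and your sketch glosses it; a fully correct account would replace the ``residue map on $H^1_{\et}$'' picture with a diagram chase between the Hochschild--Serre filtration on $H^2_{\et}(X_1(Np^\infty),\mathcal O_{\mathfrak p}(2))$, the cusp-section pullback, and the connecting map defining $\FK_1$. Everything else in your outline is sound and compatible with how the paper invokes the result.
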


In \cite[\S9.3]{FK} Fukaya and Kato further introduced two distinguished morphisms
\begin{eqnarray}
 a,b: H^1(\Z[1/Np], \mathcal Q) \rightarrow H^2(\Z[1/Np], \mathcal Q) \\ \nonumber
 a=  \FK_2 \circ \FK_1, \\ \nonumber
 b= \cup \, (1-p^{-1})\log_p(\varepsilon_{\cyc})
 \end{eqnarray}
where $\varepsilon_{\cyc} \in H^1(\mathbb Q, \mathcal O^\times_{\mathfrak p})$ stands for the cyclotomic character. Note that $(1-p^{-1})\log_p$ takes values in $\Z_p$ and hence $b$ is indeed well-defined. Under these conditions, they show the following.

\begin{propo}[Fukaya--Kato]\label{aux-1} Let $L_p'(\bar \theta) \in \Lambda_{N,\theta}$ denote the derivative of $L_p(\bar \theta)$.
Then
\begin{equation}\label{b-a}
b = L_p'(\bar \theta) \cdot a.
\end{equation}
\end{propo}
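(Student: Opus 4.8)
The plan is to reduce the identity to a single computation by passing to rank-one modules, to recognise $a$ as a Bockstein homomorphism, and then to extract the derivative $L_p'(\bar\theta)$ from a first-order expansion of the $\Lambda$-adic cyclotomic character in the cyclotomic direction; the statement is essentially \cite[\S 9.3]{FK}, so what follows is only a blueprint. I would first note that $a$ and $b$ are both $(\mathfrak h/I)_\theta$-linear maps $H^1(\Z[1/Np],\mathcal Q)\to H^2(\Z[1/Np],\mathcal Q)$: for $b=\cup\,(1-p^{-1})\log_p(\varepsilon_{\cyc})$ this is clear, as it is cup-product with a class with coefficients in $\Z_p$, while for $a=\FK_2\circ\FK_1$ it follows from the functoriality of connecting homomorphisms and of the projection defining $\FK_2$, together with the fact that $\Lambda_{N,\theta}$ acts on $\mathcal Q$ through $(\mathfrak h/I)_\theta\simeq\Lambda_{N,\theta}/(L_p(\bar\theta))$ by \eqref{isosMW}. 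Since $\FK_1,\FK_2$ are isomorphisms and $\mathcal S_\theta$ is free of rank one over $(\mathfrak h/I)_\theta$ (a consequence of the Iwasawa main conjecture, as exploited throughout \cite{FK}), both the source and the target are free of rank one over $(\mathfrak h/I)_\theta$, so it suffices to prove $b(\xi)=L_p'(\bar\theta)\cdot a(\xi)$ for a single generator $\xi$.

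Next I would make the shape of $a$ explicit. By the description recalled before \eqref{FK2-exp}, $\FK_1$ is the connecting homomorphism of $0\to\Lambda_{N,\theta}(\underline{\varepsilon}_{\cyc})(2)\xrightarrow{L_p(\bar\theta)}\Lambda_{N,\theta}(\underline{\varepsilon}_{\cyc})(2)\to\mathcal Q\to0$ (under $\mathcal S_\theta=H^2(\Z[1/Np],\Lambda_{N,\theta}(\underline{\varepsilon}_{\cyc})(2))$), and $\FK_2$ is induced on $H^2$ by the quotient map $\Lambda_{N,\theta}(\underline{\varepsilon}_{\cyc})(2)\to\mathcal Q$; reducing this sequence modulo $L_p(\bar\theta)^2$ and using naturality of connecting maps identifies $a=\FK_2\circ\FK_1$ with the connecting homomorphism (``Bockstein'') of the self-extension
\[ 0\longrightarrow\mathcal Q\xrightarrow{\ \cdot L_p(\bar\theta)\ }\bigl(\Lambda_{N,\theta}/(L_p(\bar\theta)^2)\bigr)(\underline{\varepsilon}_{\cyc})(2)\longrightarrow\mathcal Q\longrightarrow0 . \]
The task is then to show that multiplying this Bockstein by $L_p'(\bar\theta)$ produces the cup-product $b$.

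The crux would be a Taylor-expansion argument in the cyclotomic variable. Writing $\Lambda_{N,\theta}=\cO_{\mathfrak p}[[T]]$ with $T=[\gamma]-1$ for a topological generator $\gamma$ of $1+p\Z_p$, one has $\underline{\varepsilon}_{\cyc}(\sigma)=\theta_p(\sigma)\,(1+T)^{\log_p\varepsilon_{\cyc}(\sigma)/\log_p\gamma}$ with $\theta_p(\sigma)\in\cO_{\mathfrak p}^\times$ the tame part (constant in $T$). Computing the Bockstein of the displayed sequence at cochain level with an $\cO_{\mathfrak p}$-linear splitting, one is led to the expansion (here $c=\log_p\varepsilon_{\cyc}(\sigma)/\log_p\gamma$)
\[ (1+T)^{c}\ \equiv\ \bigl((1+T)^{c}\bmod P\bigr)\ +\ \frac{P}{P'}\cdot\frac{d}{dT}\bigl((1+T)^{c}\bigr)\pmod{P^{2}}, \]
valid when $P'$ is invertible modulo the height-one prime attached to $f$, where $P$ is the distinguished polynomial associate to $L_p(\bar\theta)$ (of $\mu$-invariant zero by Ferrero--Washington). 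This shows that the extension class of $a$ is, modulo $L_p(\bar\theta)$, the class $(1-p^{-1})\log_p(\varepsilon_{\cyc})$ divided by the image of $L_p'(\bar\theta)$ in $(\mathfrak h/I)_\theta$ — the identification of the analytic derivative of $L_p(\bar\theta,s)$ at $s=-1$ with a specialisation of the formal derivative $d/dT$, together with the hypothesis $L_p'(\bar\theta,-1)\in\cO_{\mathfrak p}^\times$, being precisely what makes $P'$ invertible at the relevant prime and the division meaningful; the factors $1/\log_p\gamma$ and $(1-p^{-1})$ are absorbed in this bookkeeping. Combined with the reduction to rank one, this gives $b=L_p'(\bar\theta)\cdot a$.

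The main obstacle is this last step: one must perform the cochain computation of the Bockstein with an explicit, merely $\cO_{\mathfrak p}$-linear, splitting of $\bigl(\Lambda_{N,\theta}/(L_p(\bar\theta)^2)\bigr)(\underline{\varepsilon}_{\cyc})(2)$, verify that the resulting operation genuinely becomes a cup-product after scaling by $L_p'(\bar\theta)$ — which is where the simple-zero hypothesis on $L_p(\bar\theta)$ enters, to control the relevant self-$\Ext$ of $\mathcal Q$ — and then pin down every normalisation constant (the passage from $1/P'$ to $1/L_p'(\bar\theta)$, the cyclotomic derivative versus $d/dT$, and the Euler-type factor $1-p^{-1}$). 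Once these are settled, rank-one freeness forces the asserted equality.
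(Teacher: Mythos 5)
The paper does not actually prove this Proposition: it invokes \cite[Proposition 9.3.1]{FK} verbatim, with the sole remark that only primitivity of $\theta$ restricted to $(\Z/N\Z)^\times$ is needed because \cite[Lemma 9.1.3]{FK} survives in this generality. Your proposal instead attempts to reconstruct the content of the cited Fukaya--Kato proposition. Your first structural observation is correct and worth recording: the diagram of short exact sequences
\[
\begin{CD}
0 @>>> \Lambda_{N,\theta}(\underline{\varepsilon}_{\cyc})(2) @>{L_p(\bar\theta)}>> \Lambda_{N,\theta}(\underline{\varepsilon}_{\cyc})(2) @>>> \mathcal Q @>>> 0 \\
@. @VVV @VVV @VV{=}V @. \\
0 @>>> \mathcal Q @>{L_p(\bar\theta)}>> \bigl(\Lambda_{N,\theta}/(L_p(\bar\theta)^2)\bigr)(\underline{\varepsilon}_{\cyc})(2) @>>> \mathcal Q @>>> 0
\end{CD}
\]
with vertical maps the reductions mod $L_p$ and mod $L_p^2$, together with naturality of connecting maps, does identify $a=\FK_2\circ\FK_1$ with the Bockstein of the self-extension. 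The heuristic that the derivative of $L_p(\bar\theta)$ should mediate between this $L_p$-Bockstein and the cyclotomic cup-product $b$ is the right one, and it is what ``explains'' why a first derivative appears in Theorem \ref{dosteo}.

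However, there is a genuine gap, which you yourself flag but do not resolve. The expansion $(1+T)^c\equiv\bigl((1+T)^c\bmod P\bigr)+\frac{P}{P'}\cdot\frac{d}{dT}(1+T)^c\pmod{P^2}$ requires $P'$ to be a unit modulo $P$, i.e.\ the distinguished polynomial of $L_p(\bar\theta)$ to be squarefree. But the Proposition is an identity of $\Lambda_{N,\theta}$-linear maps $H^1(\Z[1/Np],\mathcal Q)\to H^2(\Z[1/Np],\mathcal Q)$ with no such hypothesis, and the squarefreeness of the Kubota--Leopoldt $p$-adic $L$-function is a deep open question, not something that can be assumed. (Ferrero--Washington gives $\mu=0$, not simple zeros.) The later hypothesis of Theorem \ref{dosteo}, namely $L_p'(\bar\theta,-1)\in\cO_{\mathfrak p}^\times$, only controls the zero at the single specialisation $r=1$ and does not rule out higher-order zeros elsewhere on $\Spec\Lambda_{N,\theta}/(L_p(\bar\theta))$, where your Taylor expansion breaks down. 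Thus your argument, as written, only establishes the identity on the \'etale locus of $\Spec\Lambda_{N,\theta}/(L_p(\bar\theta))\to\Spec\cO_{\mathfrak p}$, i.e.\ modulo nilpotents, while Fukaya--Kato's Proposition 9.3.1 is a statement in the ring itself. The normalisation issues you list (the $(1-p^{-1})$ factor, the bookkeeping between $d/dT$ and $L_p'(\bar\theta)$, the relation of the analytic derivative to the formal one) are plausibly surmountable, but the passage from the generic-fibre Bockstein computation to the full $\Lambda$-adic identity needs a different ingredient, which is presumably why the reference argues via Coleman maps and the structure of local units rather than a pointwise expansion.
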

\begin{proof}
This follows from \cite[Proposition 9.3.1]{FK}; in particular, we just need the restriction of $\theta$ to $(\mathbb Z/N\mathbb Z)^{\times}$ being primitive, since Lemma 9.1.3 of loc.\,cit.\,also works in this setting.
\end{proof}

\begin{coro}\label{invertible}
Assume $L_p'(\bar \theta,-1)$ is a $p$-adic unit. Then the map \[ H^1(\Z[1/Np], \mathcal O/\mathfrak p^t(2)) \longrightarrow H^2(\Z[1/Np],\mathcal O/\mathfrak p^t(2)), \quad \kappa \mapsto \kappa \cup (1-p^{-1}) \log_p(\varepsilon_{\cyc}) \] is invertible.
\end{coro}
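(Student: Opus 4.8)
The plan is to obtain the statement by specializing the $\Lambda$-adic identity of Proposition \ref{aux-1} to the coefficient module $\cO/\fp^t(2)$, where the $\Lambda_{N,\theta}$-module structure is trivialized.

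First I would make precise the relationship between the map in the statement and the map $b$ of Proposition \ref{aux-1}. By the Mazur--Wiles isomorphism \eqref{isosMW} one has $\mathcal Q\simeq\Lambda_{N,\theta}/(L_p(\bar\theta))(\underline\varepsilon_{\cyc})(2)$; reducing modulo $\fp^t$ and along the augmentation ideal of $\Lambda_{N,\theta}$ — which trivializes $\underline\varepsilon_{\cyc}$ — and invoking the running assumption $L_p(\bar\theta,-1)\equiv0\pmod{\fp^t}$, one gets a canonical isomorphism $\mathcal Q\otimes_{\Lambda_{N,\theta}}\cO/\fp^t\xrightarrow{\ \sim\ }\cO/\fp^t(2)$; this is precisely the specialization encoded by $\pi_f$ and \eqref{rig-iso2} in diagram \eqref{diag2}. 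Since cup product with the fixed class $(1-p^{-1})\log_p(\varepsilon_{\cyc})$ is functorial in the coefficients, pushing the map $b$ along this specialization yields exactly the map $\kappa\mapsto\kappa\cup(1-p^{-1})\log_p(\varepsilon_{\cyc})$ on $H^i(\Z[1/Np],\cO/\fp^t(2))$ appearing in the statement.

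Next I would invoke \eqref{b-a}, which gives $b=L_p'(\bar\theta)\cdot a$ with $a=\FK_2\circ\FK_1$ an isomorphism $H^1(\Z[1/Np],\mathcal Q)\xrightarrow{\ \sim\ }H^2(\Z[1/Np],\mathcal Q)$ of $\Lambda_{N,\theta}$-modules. Applying the specialization of the previous paragraph, $a$ remains an isomorphism (being the base change of one), whereas the scalar $L_p'(\bar\theta)\in\Lambda_{N,\theta}$ becomes $L_p'(\bar\theta,-1)$, which is a $p$-adic unit by hypothesis and hence acts invertibly modulo $\fp^t$. Therefore the specialized map — that is, the map in the statement — is the composite of an isomorphism with an invertible scalar, and so it is an isomorphism, which is the claim.

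The delicate point is the base-change assertion in the second paragraph: one must check that applying $-\otimes_{\Lambda_{N,\theta}}\cO/\fp^t$ (and the augmentation) to the cohomology groups $H^i(\Z[1/Np],\mathcal Q)$ and to the maps $a$ and $b$ really reproduces the cohomology with $\cO/\fp^t(2)$-coefficients together with the analogous maps, rather than introducing spurious $\mathrm{Tor}$ or connecting-homomorphism corrections. I would handle this through the explicit construction of $\FK_1$ and $\FK_2$ from the short exact sequence
\[
0\longrightarrow\Lambda_{N,\theta}(\underline\varepsilon_{\cyc})(2)\xrightarrow{\ \cdot\,L_p(\bar\theta)\ }\Lambda_{N,\theta}(\underline\varepsilon_{\cyc})(2)\longrightarrow\mathcal Q\longrightarrow0
\]
together with the vanishing statements (consequences of the Mazur--Wiles main conjecture, \S 6.3.18 of \cite{FK}) used to define them: reducing this sequence and the induced long exact cohomology sequence modulo $\fp^t$ and the augmentation ideal, and matching the outcome with diagram \eqref{diag2} and the identification \eqref{rig-iso2}, transfers the identity \eqref{b-a} to the $\cO/\fp^t(2)$-level and completes the argument.
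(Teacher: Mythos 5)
Your proposal is correct and takes essentially the same route as the paper: both rest on the factorization $b=L_p'(\bar\theta)\cdot a$ from Proposition \ref{aux-1}, the fact that $a=\FK_2\circ\FK_1$ is a $\Lambda$-adic isomorphism, and the hypothesis that $L_p'(\bar\theta,-1)$ is a unit, after specializing at $r=1$. You are somewhat more explicit about verifying that this specialization commutes with cohomology without introducing $\mathrm{Tor}$ or connecting-map corrections, a point the paper treats more briefly via the diagrams \eqref{diag3}--\eqref{diag4} and the reference to \cite[\S 4.1.3]{FK}.
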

\begin{proof}
Observe that $\FK_1$ and $\FK_2$ are $\Lambda$-adic isomorphisms, as it has been proved in \cite[\S 9.1]{FK}. Hence, once we consider the specialization map at the trivial character, we still have isomorphisms of $\mathcal O_{\mathfrak p}$-modules. The same must be true for their composition multiplied by the $p$-adic unit $L_p'(\bar \theta,-1)$, and according to Proposition \ref{aux-1} and the definitions provided in \cite[\S 4.1.3]{FK}, this is precisely the above map.
\end{proof}

After applying the Fukaya--Kato map $\FK_1$ to the bottom row of diagram \eqref{diag2}, restricting to the subspace of unramified classes at primes dividing $Np$, we reach the commutative diagram
\begin{equation}\label{diag3}
\xymatrix{
		H^1(\Z[1/Np],(H(2)/\fM^{(t)})_{\theta}^+) \ar[r]\ar^{\bar{\FK}_1}[d] &
		%H^1(\mathbb Q, (H_{\et}^1(\bar{X}_1(Np),\mathcal O_{\mathfrak p}(2))^{\ord}/(I_1,\fp^t))_{\theta}^+)
		H^1(\Z[1/Np],\bar{T}_{f,X}(1)^+) = H^1(\Z[1/Np],\mathcal O/\mathfrak p^t(2)) \ar^{\bar{\FK}_1(r=1)}[d] \\
		H^2(\Z[1/Np],\Lambda_{N,\theta}/\fp^t(\underline{\varepsilon}_{\cyc})(2)) \ar[r] & H^2(\Z[1/Np], \mathcal O/{\mathfrak p^t}(2)),
	}
\end{equation}
where the left-most vertical map is $\bar{\FK}_1 = \FK_1 \pmod{\mathfrak p^t}$. As in \eqref{diag2}, the horizontal arrows are specialization at $r=1$, and the right-most vertical arrow is accordingly  the specialization of $\bar{\FK}_1$ at $r=1$.

We may further apply now Fukaya--Kato's map $\bar{\FK}_2 = \FK_2  \pmod{\mathfrak p^t}$ to the above diagram and obtain the following one:
\begin{equation}\label{diag4}
\xymatrix{
		H^2(\Z[1/Np],\Lambda_{N,\theta}/\fp^t(\underline{\varepsilon}_{\cyc})(2)) \ar[r]\ar[d]^{\bar{\FK}_2} & H^2(\Z[1/Np], \mathcal O/\mathfrak p^t (2)) \ar[d]^{||} \\
		H^2(\Z[1/Np],(H(2)/\fM^{(t)})_{\theta}^+)\ar[r] & H^2(\Z[1/Np], \mathcal O/\mathfrak p^t (2))
	}
\end{equation}

Again the horizontal maps are specialization in level $Np$ at $r=1$ and the right-most vertical map is the specialization of $\bar{\FK}_2$ at $r=1$. In view of \eqref{FK2-exp} the latter may be identified with the identity map: according to the definitions provided in \cite[\S 6.1.6, 4.1.3]{FK} and with our current conventions, the specialization of $L_p(\bar\theta)$ at $r=1$ is  $$L_p(\bar \theta,-1) = (1-\bar \theta(p)p) \cdot L(\bar \theta,-1) = -(1-\bar \theta(p)p) \cdot \frac{B_{2}(\bar \theta)}{2},$$ which vanishes$\pmod{\mathfrak p^t}$ in light of our  assumptions.

\subsection{Proof of Theorem \ref{dosteo}}

We can finally prove Theorem \ref{dosteo}. With a slight abuse of notation, we identify global units with their image in cohomology under the Kummer map.

According to Proposition \ref{aux-0}, we have
\begin{equation}\label{first-step}
\FK_1(\bar{\kappa}_{f,2}) = \mathrm{ev}_\infty(u_{\xi_1,\xi_2} \cup u_{\chi_1,\chi_2}) = \bar c_{\xi_1} \cup \bar c_{\chi_1}  \pmod{\mathfrak p^t}, \end{equation}
where the circular units involved in the cup product are those resulting from the evaluation at infinity of the modular units $u_{\xi_1,\xi_2}$ and $u_{\chi_1,\chi_2}$, respectively. Recall we are assuming that $\xi_1 = \bar \chi_1$, and it was proved in Theorem \ref{kato-law} that $\kappa_f$ is unramified everywhere.

Next, we apply $\FK_2$ to both sides of \eqref{first-step}. Proposition \ref{aux-1} together with the commutativity of \eqref{diag4} allow us to establish that \[ \bar \kappa_{f,2} \cup (1-p^{-1}) \log_p(\varepsilon_{\cyc}) = L_p'(\bar \theta,-1) \cdot (\bar c_{\xi_1} \cup \bar c_{\chi_1}). \]

Theorem \ref{dosteo} finally follows from Corollary \ref{invertible}.

\end{document}